\newtheorem{sublemma}{}[theorem]
\title{Ordering circuits of matroids}
\author{
Cameron Crenshaw \qquad  James Oxley\\
\small Department of Mathematics\\[-0.8ex]
\small Louisiana State University\\[-0.8ex]
\small Baton Rouge, U.S.A\\
\small\tt ccrens5@lsu.edu \qquad oxley@math.lsu.edu}
\newcommand{\del}{\backslash}
\DeclareMathOperator{\cl}{cl}
\DeclareMathOperator{\si}{si}
\DeclareMathOperator{\co}{co}
\begin{document}

\maketitle


\begin{abstract}
  The cycles of a graph give a natural cyclic ordering to their edge-sets, and these orderings are consistent in that two edges are adjacent in one cycle if and only if they are adjacent in every cycle in which they appear together. An orderable matroid is one whose set of circuits admits such a consistent ordering. In this paper, we consider the question of determining which matroids are orderable. Although we are able to answer this question for non-binary matroids, it remains open for binary matroids. We give examples to provide insight into the potential difficulty of this question in general. We also show that, by requiring that the ordering preserves the three arcs in every theta-graph restriction of a binary matroid $M$, we guarantee that $M$ is orderable if and only if $M$ is graphic.
\end{abstract}


\section{Introduction}
\label{introduction}

In a graph, the edges of each cycle have an ordering on them. But this is not true for the circuits of a matroid. The goal of this paper is to see to what extent we can distinguish graphic matroids by an ordering condition that mimics the ordering condition on the edges of the cycles of a graph.

A \emph{reversible cyclic ordering} of a finite set $X$ is an arrangement of the elements of $X$ on the vertices of an $n$-gon with one element at each vertex. Elements $x_1$ and $x_2$ of $X$ are \emph{adjacent} in the ordering when the corresponding vertices of the $n$-gon lie on a common edge. Figure~\ref{ordnotationex} shows an example of such an ordering $(x_1\ x_2\ \dots\ x_n)$. The same ordering can also be denoted, for example, by $(x_3\ x_2\ x_1\ x_n\ \dots\ x_4)$. Throughout this paper, all orderings are assumed to be reversible cyclic orderings unless stated otherwise.

\begin{figure}
\centering
\begin{tikzpicture}[style=thick]

\foreach \x in {0,1,4,5} \draw (90+60*\x:1)--(150+60*\x:1);
\draw[shorten >= 0.4cm] (-30:1)--(-90:1);
\draw[shorten >= 0.4cm] (210:1)--(-90:1);

\draw (90:1.3) node {$x_1$};
\draw (30:1.35) node {$x_2$};
\draw (-30:1.35) node {$x_3$};
\draw (0.05,-0.9) node {$\dots$};
\draw (215:1.3) node {$x_{n-1}$};
\draw (150:1.35) node {$x_n$};

\end{tikzpicture}
\caption{A reversible cyclic ordering.}
\label{ordnotationex}
\end{figure}\noindent

In a graph, there is an associated ordering on the edge set of each cycle. These orderings have the property that two edges are adjacent in an ordering of a given cycle if and only if they are adjacent in the ordering of every cycle in which the edges appear together.

Unlike the cycles of a graph, the circuits of a matroid are sets without inherent order. We give a matroid $M$ an ordering by imposing an ordering on each of its circuits. Such an ordering of $M$ is \emph{consistent} if, for every pair $\{e,f\}$ of distinct elements of $E(M)$ and every pair $\{C,C'\}$ of circuits of $M$ with $\{e,f\}\subseteq C\cap C'$, if $e$ and $f$ are adjacent in the ordering of $C$, then $e$ and $f$ are adjacent in the ordering of $C'$. A matroid is called \emph{orderable} if it has a consistent ordering.

The notation for matroids in this paper follows~\cite{oxley} with one modification. We call a matroid $N$ a~\emph{series extension} of a matroid $M$ if $N$ can be obtained from $M$ by a (possibly empty) sequence of single-element series extensions; a \emph{parallel extension} is defined analogously.

The primary goal of this work is characterizing orderable matroids. As noted above, our first examples of orderable matroids are graphic matroids.

\begin{proposition}
\label{graphicimpord}
If $M$ is a graphic matroid, then $M$ is orderable.
\end{proposition}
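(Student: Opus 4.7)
The plan is to equip each cycle $C$ of $G$ with its natural cyclic edge-ordering and verify that the resulting ordering of the circuits of $M(G)$ is consistent. Concretely, writing $C$ as a closed walk $v_0 v_1 \cdots v_{k-1} v_0$ through distinct vertices, with $e_i$ the edge $v_{i-1}v_i$ (indices taken modulo $k$), I would place $e_1, e_2, \ldots, e_k$ at consecutive vertices of a $k$-gon. Since reversing the direction of traversal or choosing a different starting vertex produces the same reversible cyclic ordering, this assignment $C \mapsto$ ordering of $E(C)$ is well defined.

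The entire argument then reduces to the following observation: two edges $e$ and $f$ of a cycle $C$ are adjacent in the natural ordering of $C$ if and only if they share an endpoint in $G$. The forward direction is immediate from the walk description, since consecutive edges $e_i$ and $e_{i+1}$ meet at $v_i$. For the reverse direction, note that since the vertices $v_0, v_1, \ldots, v_{k-1}$ of $C$ are distinct, each vertex of $C$ is incident to exactly two edges of $C$, and these two edges are a consecutive pair in the ordering. Hence sharing an endpoint forces adjacency in the ordering.

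Given the observation, consistency is immediate: if $\{e,f\} \subseteq C \cap C'$ and $e,f$ are adjacent in the ordering of $C$, then they share a vertex of $G$, so they are also adjacent in the ordering of $C'$. All that remains is a sanity check on small circuits. A loop yields a one-element circuit and so contributes no adjacency constraints; a pair of parallel edges forms a two-element circuit in which the two edges are trivially adjacent, and no graph cycle of length at least three can contain both of them, so no conflict arises. The argument is short and presents no genuine obstacle; the slogan is simply that the natural cyclic ordering of a graph cycle is dictated by the local structure at each vertex of $G$, a property that is intrinsic to the pair $\{e,f\}$ and independent of the cycle containing them.
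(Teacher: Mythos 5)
Your proof is correct and is essentially the argument the paper has in mind: the paper states this proposition without proof, relying on the observation made in its introduction that the natural cyclic ordering of a graph cycle makes two edges adjacent exactly when they share an endpoint, which is an intrinsic property of the pair. Your write-up simply supplies the details (including the loop and parallel-edge cases), so there is nothing to add.
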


However, orderability is not enough to distinguish graphic matroids from non-graphic matroids. Our main result specifies all non-binary orderable matroids. The infinitely many such matroids are all built from $U_{2,n}$ for some $n\geq 4$ by using two operations, which we now describe.

For a matroid $M$ without coloops, a series extension of $M$ is \emph{balanced} if, for some integer $k$ exceeding one, each element of $M$ is replaced by $k$ elements in series. We call $k$ the \emph{order} of the balanced series extension. The second operation is a generalization of the operation of adding an element in parallel to another. A \emph{theta-graph} is a graph consisting of a pair of distinct vertices and three internally disjoint paths between them. Now, let $P$ be a nonempty subset of a series class of a matroid $M$. Fix an element $t$ of $P$, contract $P-t$, and relabel $t$ as $t'$ to obtain $M'$. Let $N$ be the cycle matroid of a theta-graph with series classes $\{t'\}$, $P$, and $P'$, where $\vert P'\vert=\vert P\vert$. Finally, let $M''$ be the 2-sum of $M'$ and $N$ with basepoint $t'$. The operation transforming $M$ into $M''$ is called \emph{parallel-path addition}. The \emph{size} of this addition is $\vert P\vert$; we call $P$ and $P'$ \emph{parallel paths} of $M''$, and say that $M''$ is obtained from $M$ by adding $P'$ \emph{in parallel} to $P$. The following theorem is the main result of the paper.

\begin{theorem}
\label{nonbinchar}
Let $M$ be a connected non-binary matroid. Then $M$ is orderable if and only if it can be obtained from $U_{2,n}$ for some $n\geq 4$ by a sequence of the following operations:
\begin{enumerate}
\item[(i)]{balanced series extension; and}
\item[(ii)]{parallel-path addition.}
\end{enumerate}
\end{theorem}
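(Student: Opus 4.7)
The plan is to treat the two directions separately, with sufficiency being relatively direct and necessity constituting the bulk of the work.

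\textbf{Sufficiency.} I would first establish the base case that $U_{2,n}$ is orderable for $n\geq 4$. Since every circuit of $U_{2,n}$ has exactly three elements, and every pair of elements of a three-point reversible cyclic ordering is adjacent, any assignment of cyclic orderings to the circuits is vacuously consistent. Next I would show that each of the two construction operations preserves orderability. For a balanced series extension of order $k$, I would lift a consistent ordering of $M$ to its extension by fixing, for each element $e$ of $M$, a global linear order on the series class replacing $e$, and then, within the ordering of each circuit of the extension, replacing $e$ with the contiguous block consisting of its series class in that fixed order. Adjacencies in the extended orderings are then controlled either by the ambient ordering on $M$ or by the fixed block order, giving consistency. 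For parallel-path addition, the new circuits of $M''$ are obtained from circuits of $M$ that meet $P$ by replacing $P$ with $P'$ or by combining $P$ and $P'$ through the theta-graph structure; I would build consistent orderings for these new circuits directly from the orderings of the corresponding old circuits and verify consistency by a short case check.

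\textbf{Necessity.} Let $M$ be a connected non-binary orderable matroid. By Tutte's excluded-minor characterization, $M$ has a $U_{2,4}$-minor. I would induct on $|E(M)|$, with base case $M\cong U_{2,n}$ for some $n\geq 4$. The inductive step requires locating a reducible configuration: either a non-trivial balanced family of series classes that can be collapsed, or a parallel path that can be removed. To find such a configuration, I would first argue that the $U_{2,4}$-minor promotes to a rank-two flat $L$ in $M$ whose simplification is $U_{2,m}$ for some $m\geq 4$; the consistency of the ordering should constrain the deletions and contractions used to expose the $U_{2,4}$, so that no element outside a series class or parallel path of $L$ can survive. I would then analyse how the remaining elements of $M$ attach to $L$ and show, using the adjacency constraints forced by consistency on circuits that cross $L$, that each such element must lie in a series class of an element of $L$ (producing a balanced series extension) or belong to a parallel path (produced by parallel-path addition). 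Collapsing a shortest series class or removing a smallest parallel path then yields a smaller connected non-binary orderable matroid, and the inductive hypothesis completes the argument.

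\textbf{Main obstacle.} The hard part is the necessity direction, and specifically ruling out any attachment to $L$ that is neither series nor parallel. Given a circuit of $M$ meeting $L$ in some irregular pattern, one must leverage the adjacencies forced on that circuit, together with the adjacencies of circuits spanned within $L$, to produce a pair of circuits in which a common adjacency is violated. A further subtlety is proving that the series classes meeting $L$ all have the same size, so that the series extension is genuinely balanced rather than arbitrary; this uniformity should also fall out of the consistency condition applied to circuits that simultaneously traverse several series classes of $L$. A final delicate point is ensuring that the reduction step preserves connectivity and non-binariness, so the induction can be applied cleanly; the combinatorial case analysis needed for all of this is what I expect to be the real technical heart of the proof.
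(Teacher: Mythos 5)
Both directions of your proposal contain concrete errors. For sufficiency, the contiguous-block lifting of a consistent ordering through a balanced series extension does not work: already for the order-$2$ balanced series extension of $U_{2,4}$, ordering the circuit $S_1\cup S_2\cup S_3$ as $(1_0\ 1_1\ 2_0\ 2_1\ 3_0\ 3_1)$ and $S_1\cup S_3\cup S_4$ as $(1_0\ 1_1\ 3_0\ 3_1\ 4_0\ 4_1)$ makes $1_1$ and $3_0$ adjacent in the second circuit but not the first. In fact the paper proves (Proposition~\ref{ordimpbalanced}(i)) that in \emph{any} consistent ordering of a series extension of $U_{2,n}$ with $n\geq 4$, every block contained in a series class is a singleton, so no contiguous-block scheme can succeed; the correct ordering (Lemma~\ref{balancebringsorder}) interleaves the three series classes of each circuit, and the two circuit types require different interleaving patterns. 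A second issue is that you treat balanced series extension as an operation that preserves orderability of arbitrary intermediate matroids in the construction sequence; the paper instead commutes the operations (Lemma~\ref{concurrencyexchange}) so that a single balanced series extension is applied directly to $U_{2,n}$ and all parallel-path additions come afterwards.

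For necessity, the claim that the $U_{2,4}$-minor ``promotes to a rank-two flat $L$ in $M$ whose simplification is $U_{2,m}$'' is false: the order-$2$ balanced series extension of $U_{2,4}$ is a connected, non-binary, orderable matroid in the class, yet it is simple with every circuit of size six and hence has no rank-two flat with more than two points. The $U_{2,n}$ lives in $M$ only as a $3$-connected piece of the Cunningham--Edmonds canonical tree decomposition, reached through $2$-sums, and that is the machinery the paper uses. Your sketch also omits the genuinely hard content of this direction: proving that an orderable matroid has no $U_{3,5}$-minor and no $\mathcal{W}^3$-minor (the latter requiring a lengthy case analysis over restrictions of $PG(2,3)$), concluding that the only $3$-connected non-binary orderable matroids are the $U_{2,n}$, showing the decomposition tree has exactly one such vertex label and no $3$-connected binary label with at least four elements (via an $M(K_4)$/$F_7^\ast$ analysis), and only then pruning the tree to exhibit the parallel-path additions. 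Without a substitute for these exclusion arguments, the inductive step of your plan has nothing to grip.
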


When we come to consider binary orderable matroids, we encounter considerable difficulty. For example, as we show in the next section, $F_7^\ast$ and $M^\ast(K_5)$ are not orderable, yet each has an orderable series extension. In view of this, it is natural to consider additional conditions that one can add to orderability in order to distinguish graphic matroids within binary matroids. The next theorem gives three equivalent such additional conditions.

\begin{theorem}
\label{exminororderable}
The following are equivalent for a binary matroid $M$:
\begin{enumerate}
\item[(i)]{$M$ is graphic.}
\item[(ii)]{every minor of $M$ is orderable.}
\item[(iii)]{every series minor of $M$ is orderable.}
\item[(iv)]{every parallel minor of $M$ is orderable.}
\end{enumerate}
\end{theorem}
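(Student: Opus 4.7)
The easy implications are routine: $(i) \Rightarrow (ii)$ follows from Proposition~\ref{graphicimpord} together with the closure of the class of graphic matroids under minors, and $(ii) \Rightarrow (iii)$, $(ii) \Rightarrow (iv)$ are immediate since series minors and parallel minors are in particular minors.

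For the converse directions, the central observation is that \emph{deletion preserves orderability}: given a consistent ordering $\sigma$ of $M$ and any $e \in E(M)$, the restriction of $\sigma$ to the circuits of $M\setminus e$ (which are exactly the circuits of $M$ avoiding $e$) is still consistent, since no adjacencies are altered. This immediately yields $(iii) \Rightarrow (ii)$: every minor $M/C \setminus D$ arises by deleting $D$ from the series minor $M/C$, and $M/C$ is orderable by $(iii)$. To complete the cycle via $(ii) \Rightarrow (i)$, the plan is to invoke Tutte's excluded-minor theorem: a binary matroid is graphic iff it has no minor isomorphic to any of $F_7$, $F_7^*$, $M^*(K_5)$, $M^*(K_{3,3})$. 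It therefore suffices to show each of these four matroids fails to be orderable, so that $(ii)$ rules them out as minors. The paper handles $F_7^*$ and $M^*(K_5)$ in the next section. For $F_7$: every pair of elements lies in a unique 3-circuit (line), forcing adjacency there (all pairs of a 3-element cyclic ordering are adjacent), and by consistency every such pair must remain adjacent in every 4-circuit (co-line) containing it; but each co-line is a 4-element cyclic arrangement in which only $4$ of the $\binom{4}{2}=6$ pairs can be adjacent, a contradiction. An analogous combinatorial count handles $M^*(K_{3,3})$.

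The implication $(iv) \Rightarrow (i)$ is the most delicate. The natural dual move, namely a ``contraction preserves orderability'' analogue, fails in general: contracting $e$ from a circuit $C$ can make two elements $f_1,f_2$ adjacent in $C\setminus e$ that were non-adjacent in $C$, and this new adjacency may conflict with other circuits of $M$ not containing $e$. However, contracting an element of a parallel pair $\{e,f\}$ \emph{does} preserve orderability: the parallel partner $f$ becomes a loop in $M/e$, and loops cannot participate in the consistency condition, which concerns pairs of distinct elements in two circuits. The plan for $(iv) \Rightarrow (i)$ is therefore contrapositive: given a binary non-graphic $M$, use Tutte's theorem to obtain an excluded minor $N$, and construct a non-orderable parallel minor of $M$. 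The main obstacle here is that, although each of the four excluded minors is $3$-connected, simple, and cosimple, $N$ may arise as a minor of $M$ only via contractions that are \emph{not} parallel-pair contractions; a careful case analysis on the embedding of $N$ in $M$ is required, possibly realizing as a parallel minor of $M$ some parallel extension of $N$ that remains non-orderable.
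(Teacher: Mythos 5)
Your proposal has two genuine gaps. First, the step $(iii)\Rightarrow(ii)$ rests on a misreading of ``series minor'': a minor $M\backslash X/Y$ is a series minor only when each element of $Y$ is in series with an element of $M\backslash X$ outside $Y$ (this is the characterization from \cite[Proposition 5.4.2]{oxley} used in the proof of Theorem~\ref{exclunifserminor}), so $M/C$ for an arbitrary independent set $C$ is \emph{not} a series minor. For instance, $U_{2,4}/e\cong U_{1,3}$ is a minor but not a series minor of $U_{2,4}$, which has no series pairs. Hence ``every minor arises by deleting from the series minor $M/C$'' is false, and the chain $(iii)\Rightarrow(ii)\Rightarrow(i)$ collapses; $(iii)$ is a priori strictly weaker data than $(ii)$ and needs its own argument. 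The ingredient you are missing, for both $(iii)\Rightarrow(i)$ and $(iv)\Rightarrow(i)$, is Bixby's refinement of Tutte's theorem~\cite{bixbyreg,bixbykurwag}: within binary matroids, the excluded \emph{series} minors and excluded \emph{parallel} minors for graphicness all lie in the explicit list $\{F_7, F_7^\ast, M^\ast(K_5), M^\ast(K_{3,3}), M^\ast(K_{3,3}'), M^\ast(K_{3,3}''), M^\ast(K_{3,3}'''), R_{10}\}$. That theorem is precisely what replaces the ``careful case analysis on the embedding of $N$ in $M$'' that you defer at the end; without it, $(iv)\Rightarrow(i)$ --- which you yourself flag as unfinished --- is not proved, and neither is $(iii)\Rightarrow(i)$.

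Second, even granting the reduction to excluded (series/parallel) minors, the bulk of the required work is the ad hoc verification that each matroid on that list is non-orderable, and your proposal supplies only one of the eight. Your $F_7$ argument is correct and in fact cleaner than the paper's: every pair of elements of $F_7$ lies in a triangle, hence must be adjacent in every common circuit, which is impossible for the six pairs inside a $4$-element circuit. But this does not transfer to $M^\ast(K_{3,3})$, where two elements lie in a common triangle only when the corresponding edges of $K_{3,3}$ share an endpoint; there is no ``analogous combinatorial count,'' and the paper instead needs the $(e,f,g)$-machinery of Lemma~\ref{efglemma} and Corollary~\ref{efgcor}. Deferring $F_7^\ast$ and $M^\ast(K_5)$ to ``the paper'' is circular, since their non-orderability is established only inside this very proof; and for $(iii)$ and $(iv)$ you would additionally need the non-orderability of $M^\ast(K_{3,3}')$, $M^\ast(K_{3,3}'')$, $M^\ast(K_{3,3}''')$, and $R_{10}$, none of which your proposal addresses.
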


Although, as noted above, there are orderable binary matroids that are not graphic, we know of no counterexample to the following.

\begin{conjecture}
\label{3connbinconj}
A $3$-connected orderable binary matroid is graphic.
\end{conjecture}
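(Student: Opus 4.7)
The plan is to attack the conjecture via Tutte's excluded-minor characterization of binary graphic matroids: a binary matroid $M$ is graphic if and only if $M$ has no minor isomorphic to $F_7$, $F_7^{\ast}$, $M^{\ast}(K_5)$, or $M^{\ast}(K_{3,3})$. The paper establishes that $F_7^{\ast}$ and $M^{\ast}(K_5)$ are not orderable, and a direct argument handles $F_7$: every pair of elements lies in a $3$-element line in which they are automatically adjacent, so consistency would force every pair in a $4$-circuit (the complement of a line) to be adjacent, which is impossible in a cyclic ordering of four elements. A similar case analysis handles $M^{\ast}(K_{3,3})$. Thus the conjecture is equivalent to showing that no $3$-connected binary orderable matroid contains any of these four matroids as a minor.

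Because orderability is not preserved under taking minors, the standard excluded-minor induction does not apply directly; this is the very obstruction that motivates stating the result only under the added hypothesis of $3$-connectivity. I would induct on $|E(M)|$ using Seymour's Splitter Theorem or, equivalently in the binary setting, Tutte's Wheels-and-Whirls Theorem: if $M$ is a $3$-connected binary orderable matroid other than a wheel, then there is an element $e$ such that $M\del e$ or $M/e$ is $3$-connected. If this chosen single-element deletion or contraction inherits orderability from $M$, the inductive hypothesis yields that it is graphic, and one would then lift graphicness back to $M$, using the uniqueness of the graph representation of a $3$-connected graphic matroid (Whitney) to control how the added element $e$ must extend the underlying graph.

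The main obstacle is the inheritance step: producing a consistent ordering of a $3$-connected single-element minor from one on $M$. Deletion is less delicate, since the circuits of $M\del e$ are the $e$-free circuits of $M$ and their cyclic orderings pull back directly; the hard case is contraction, where the cyclic order on $C-e$ inherited by removing $e$ from a circuit $C$ of $M$ containing $e$ need not match the cyclic order that $C-e$ carries when it is itself a circuit of $M/e$ arising from a different circuit of $M$. A more promising route may be to combine $3$-connectivity with the theta-preservation condition flagged in the abstract: if one could show that every consistent ordering of a $3$-connected binary matroid automatically preserves the three arcs of each theta-graph restriction, then the conjecture would follow from the theorem indicated at the end of the abstract. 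Establishing this automatic theta-preservation, namely ruling out that a consistent ordering of a $3$-connected binary matroid can interleave the arcs of any theta-graph restriction, is where I expect the heart of the difficulty to lie.
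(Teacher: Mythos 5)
The statement you are addressing is stated in the paper as Conjecture~\ref{3connbinconj}: the authors explicitly say they know of no counterexample but do not prove it, so there is no proof in the paper for your proposal to match. The paper's only progress is Theorem~\ref{4connbinF7} (a $4$-connected binary orderable matroid with no series minor isomorphic to $F_7^\ast$ is graphic), and that is proved not by excluded-minor induction but via Seymour's adjacency theorem (Theorem~\ref{adjtheorem}): two elements of a $4$-connected binary matroid that are opposite in no $M(K_4)$-minor are adjacent edges in a graphic or cographic representation, while Lemma~\ref{oppimpadj} shows that opposite elements of an $M(K_4)$-minor cannot be adjacent in a consistent ordering. Your proposal, as you yourself acknowledge, is a plan with open steps rather than a proof.

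The gaps are genuine, and you have correctly located one of them, but you underestimate how fatal they are. First, the reduction to excluded minors fails at the outset: orderability is not preserved under contraction (the paper's Examples~\ref{nonordO1} and~\ref{nonordO2} exhibit orderable binary matroids with $F_7^\ast$- and $M^\ast(K_5)$-minors), so knowing that the four excluded minors are non-orderable says nothing about whether a $3$-connected orderable binary matroid can contain them; the conjecture is not equivalent to that containment statement in any usable way. Second, in the wheels-and-whirls induction, even the deletion case (where orderability is inherited by Proposition~\ref{basics}(i)) leaves a non-routine lifting step: concluding that $M$ is graphic from the graphicness of $M\del e$ again requires controlling the minors of $M$ that use $e$, which brings back the contraction problem. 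Third, your proposed shortcut --- that every consistent ordering of a $3$-connected binary matroid is automatically a theta-ordering, which would yield the conjecture via Theorem~\ref{wag} --- is itself an unproved statement at least as strong as the conjecture, and you offer no argument for it. One positive note: your quick argument that $F_7$ is not orderable (every pair of elements lies on a $3$-point line, forcing all six pairs of a $4$-element circuit to be pairwise adjacent) is correct and is a pleasant alternative to the paper's use of Corollary~\ref{efgcor}, but it does not advance the main question.
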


We have, however, made the following progress.

\begin{theorem}
\label{4connreg}
A $4$-connected regular orderable matroid is graphic.
\end{theorem}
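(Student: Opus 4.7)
The plan is to apply Seymour's decomposition theorem for regular matroids in order to reduce to a small number of base cases. Seymour's theorem states that every regular matroid can be constructed from graphic matroids, cographic matroids, and copies of $R_{10}$ by $1$-, $2$-, and $3$-sums. Since every nontrivial $k$-sum (with $k\in\{1,2,3\}$) produces a $k$-separation, a $4$-connected regular matroid admits no such decomposition and must itself be graphic, cographic, or isomorphic to $R_{10}$. Because $M$ is assumed to be orderable and the desired conclusion is that $M$ is graphic, it suffices to show that neither $R_{10}$ nor any non-graphic $4$-connected cographic matroid is orderable.

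For the case $M\cong R_{10}$, I would argue directly. The matroid $R_{10}$ has only $10$ elements, all $30$ of its circuits have size $4$, and its automorphism group acts transitively on both elements and circuits, so the orderability conditions reduce to a finite and highly symmetric system of adjacency constraints. The plan is to single out a short list of circuits whose pairwise intersections of size at least two enforce adjacency requirements that are mutually incompatible with any cyclic arrangement, thereby exhibiting an explicit obstruction to orderability.

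For the case where $M=M^*(G)$ is $4$-connected, cographic, and non-graphic, the graph $G$ must be non-planar. The matroid $4$-connectivity of $M^*(G)$ translates into $G$ being simple, essentially $4$-edge-connected, and of sufficiently high vertex connectivity, so that known structural results on non-planar graphs of high connectivity supply a $K_5$-subdivision in $G$. The corresponding bonds of $G$, viewed as circuits of $M^*(G)$, should reproduce the combinatorial obstruction from $M^*(K_5)$ established earlier in the paper.

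The main obstacle lies in this last step. The earlier observation that $M^*(K_5)$ admits orderable series extensions shows that non-orderability is not a minor-closed property in general, so simply locating $M^*(K_5)$ as a minor of $M^*(G)$ is not enough. The essential work is to verify that the $4$-connectivity hypothesis rules out precisely the series-type modifications that would otherwise hide the obstruction, and then to check that the adjacency constraints coming from the $K_5$-subdivision propagate cleanly inside $M^*(G)$ to yield a genuine contradiction with any purported consistent ordering.
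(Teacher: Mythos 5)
Your opening reduction is sound: a $4$-connected regular matroid is internally $4$-connected, so Seymour's decomposition theorem does force it to be graphic, cographic, or isomorphic to $R_{10}$, and the $R_{10}$ case is already disposed of in the proof of Theorem~\ref{exminororderable} (via Corollary~\ref{efgcor} applied to the graft representation). The proposal fails, however, in the cographic case, and precisely at the step you defer as "the essential work." First, the mechanism you sketch is wrong-footed: if $H$ is a $K_5$-subdivision in $G$, then $M^\ast(H)=M^\ast(G)/(E(G)-E(H))$ is a \emph{contraction} minor of $M^\ast(G)$, not a restriction; the bonds of $H$ are not in general bonds of $G$, so the circuits of $M^\ast(K_5)$ that carry the obstruction are simply not circuits of $M^\ast(G)$, and orderability (preserved by deletion, not by contraction) hands you no ordering of them to contradict. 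What one actually finds inside $M^\ast(G)$ is a series extension of $M^\ast(K_5)$, and Example~\ref{nonordO2} exhibits an orderable such series extension, so no argument that merely locates a $K_5$-subdivision and "propagates" the $M^\ast(K_5)$ constraints can close the case. Second, the claim that $4$-connectivity "rules out precisely the series-type modifications that would otherwise hide the obstruction" is asserted rather than proved, and it is not at all clear how to prove it along these lines: the series classes created by the subdivision live inside the contraction, not inside $M^\ast(G)$ itself.

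The paper takes a different and more uniform route that avoids this difficulty entirely: it proves the stronger Theorem~\ref{4connbinF7} (a $4$-connected \emph{binary} orderable matroid with no series minor isomorphic to $F_7^\ast$ is graphic), from which Theorem~\ref{4connreg} is immediate since regular matroids have no $F_7^\ast$-minor. The engine there is Seymour's adjacency theorem (Theorem~\ref{adjtheorem}) together with Lemma~\ref{oppimpadj}: any two elements adjacent in a consistent ordering cannot be opposite in an $M(K_4)$-minor, hence must be adjacent edges in a graph representing $M$ or $M^\ast$. In the cographic case this is applied to the two vertex bonds at the ends of an edge $e$ of $G$, whose orderings force an adjacent pair that then yields a triangle in $G$, contradicting $4$-connectivity; if $M$ is neither graphic nor cographic, no pair of elements can be adjacent in any ordering at all. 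If you want to salvage your decomposition-based outline, you would still need an argument of essentially this local-adjacency type for the cographic case; the global $K_5$-subdivision approach does not supply one.
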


Another condition one can add to orderability to distinguish graphic matroids within binary matroids involves the theta-graphs in a matroid $M$, where a \emph{theta-graph} in $M$ is a restriction of $M$ that is isomorphic to the cycle matroid of a theta-graph. Equivalently, it is a restriction of $M$ that is isomorphic to a series extension of $U_{1,3}$. The series classes of a theta-graph are called its \emph{theta-arcs}. A subset $B$ of a circuit $C$ is a \emph{block} if there is a listing $b_1,b_2,\dots,b_k$ of the elements of $B$ such that $b_i$ and $b_{i+1}$ are adjacent for all $i$ in $[k-1]$. A consistent ordering of a matroid $M$ is a \emph{theta-ordering} if every theta-arc of every theta-graph of $M$ is a block in the ordering; $M$ is \emph{theta-orderable} if it has a theta-ordering.

Theta-orderability turns out to be equivalent to a concept introduced by Wagner~\cite{wagnerarcs}. For distinct circuits $C$ and $D$ of a matroid $M$, an \emph{arc} of $C$ is a minimal non-empty subset $A$ of $C$ such that $A\cup D$ contains at least two circuits. A set $\{A_1, A_2, A_3\}$ of arcs of a common circuit is \emph{incompatible} if $A_1\cap A_2\cap A_3\neq \emptyset$ and $A_i-(A_j\cup A_k)\neq \emptyset$ for all $i$, $j$, and $k$ such that $\{i,j,k\}=\{1,2,3\}$. In Section~\ref{theta}, we prove the following characterization of theta-orderable binary matroids. The equivalence of (i) and (ii) is Wagner's main result~\cite{wagnerarcs}.

\begin{theorem}
\label{wag}
The following are equivalent for a binary matroid $M$:
\begin{enumerate}
\item[(i)]{$M$ is graphic;}
\item[(ii)]{$M$ has no set of incompatible arcs; and}
\item[(iii)]{$M$ is theta-orderable.}
\end{enumerate}
\end{theorem}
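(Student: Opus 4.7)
The plan is to invoke Wagner's main theorem~\cite{wagnerarcs} for the equivalence $(i)\Leftrightarrow(ii)$, and close the loop by establishing $(i)\Rightarrow(iii)$ and $(iii)\Rightarrow(ii)$.

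For $(i)\Rightarrow(iii)$, I would fix a graph $G$ with $M(G)=M$, and order each circuit by its cycle in $G$; consistency of this ordering is already Proposition~\ref{graphicimpord}. Given a theta-graph restriction $M|T$, the subgraph $G[T]$ has cycle matroid equal to the theta matroid, and since that matroid is connected, $G[T]$ is $2$-connected. Each theta-arc of $M|T$, being a series class of $M(G[T])$, is the edge set of a path through degree-$2$ vertices of $G[T]$. Such a path is traversed consecutively in every cycle of $G[T]$ that contains it, so each theta-arc appears as a block in the cyclic ordering of every circuit containing it.

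For $(iii)\Rightarrow(ii)$, I would argue the contrapositive. Suppose $\{A_1,A_2,A_3\}$ is an incompatible triple of arcs of a circuit $C$ of $M$ with respect to a circuit $D$, and pick $x\in A_1\cap A_2\cap A_3$ and $y_i\in A_i\setminus(A_j\cup A_k)$ for each $i$. The first step is to use the minimality clause in the arc definition together with the binary symmetric-difference structure of circuits to exhibit, for each $i$, a theta-graph restriction of $M$ in which $A_i$ is one of the theta-arcs. Theta-orderability would then force each $A_i$ to be a block in the cyclic ordering of $C$. Placing $x$ at position $0$, each block $A_i$ becomes an interval $[-a_i,b_i]$, and the fact that $y_i$ lies in $A_i$ but outside $A_j\cup A_k$ forces either $a_i>\max(a_j,a_k)$ or $b_i>\max(b_j,b_k)$. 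Since there are three indices and only two ``sides'', two of the indices must use the same side and yield contradictory strict inequalities.

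The main obstacle is the theta-graph extraction step in the contrapositive. One must verify that in a binary matroid, an incompatible triple of arcs of $C$ with respect to $D$ always produces, for each $A_i$, a theta-graph restriction of $M$ in which $A_i$ is a theta-arc. This is the heart of the translation between Wagner's arc language and the theta-graph language, and it depends essentially on the fact that symmetric differences of circuits in a binary matroid are disjoint unions of circuits. Once this theta-structure is identified, both the forcing of the blocks and the final pigeonhole on cyclic intervals are routine.
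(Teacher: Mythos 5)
Your proposal is correct and follows essentially the same route as the paper: Wagner's theorem gives (i)$\Leftrightarrow$(ii), graphic matroids are visibly theta-orderable, and the contrapositive of (iii)$\Rightarrow$(ii) is proved by forcing each $A_i$ to be a block of $C$ and then applying a two-sides pigeonhole argument, which is exactly the paper's adaptation of Wagner's argument. The extraction step you flag as the main obstacle is indeed routine and is asserted without proof in the paper as well: if $C_i$ is a circuit contained in $A_i\cup D$ other than $D$, then minimality of $A_i$ forces $A_i\cap D=\emptyset$ and $A_i\subseteq C_i$, and the fact that symmetric differences of circuits in a binary matroid are disjoint unions of circuits shows that the only circuits contained in $A_i\cup D$ are $D$, $C_i$, and $D\,\triangle\,C_i$, so $M|(A_i\cup D)$ is itself a theta-graph with theta-arcs $A_i$, $C_i\cap D$, and $D-C_i$.
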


The following characterization of theta-orderable non-binary matroids will also be proved in Section~\ref{theta}.

\begin{theorem}
\label{tononbinchar}
Let $M$ be a connected non-binary matroid. Then $M$ is theta-orderable if and only if $M$ is a parallel extension of a balanced series extension of $U_{2,n}$ for some $n\geq 4$.
\end{theorem}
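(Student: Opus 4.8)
The plan is to build on Theorem~\ref{nonbinchar}. Every theta-ordering is a consistent ordering, so a theta-orderable matroid is orderable; hence a connected non-binary theta-orderable matroid $M$ is already known to arise from some $U_{2,m}$ (with $m\geq 4$) by a sequence of balanced series extensions and parallel-path additions. What remains is to decide which members of this orderable family are theta-orderable, and I would organise everything around a single distinction: whether or not $M$ has a pair of parallel paths $B,C$ with $|B|=|C|\geq 2$, equivalently a theta-graph two of whose arcs have size at least two.

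For the ``if'' direction, let $N$ be a parallel extension of a balanced series extension of $U_{2,m}$. Its circuits are the two-element circuits coming from parallel pairs together with the long circuits, each of which is a transversal of the parallel classes contained in a union of three series classes. The arcs of any theta-graph are independent, so no arc contains a parallel pair, and a short argument rules out the possibility that the three pairwise unions of the arcs of a theta-graph are all long circuits. Consequently at least one pairwise union is a parallel pair, which forces two of the arcs to be single elements and leaves the third arc with a singleton complement inside each circuit containing it. In a reversible cyclic ordering, any set that is a singleton or has a singleton complement is automatically a block; thus every arc of every theta-graph of $N$ is a block in every consistent ordering. Since $N$ is orderable by Theorem~\ref{nonbinchar}, any consistent ordering of $N$ is already a theta-ordering.

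The substance is the ``only if'' direction, and the key step I would isolate is the lemma that a connected non-binary theta-orderable matroid has no pair of parallel paths of size at least two. The engine is an endpoint-rigidity argument. If $B$ is a theta-arc with $|B|\geq 2$, then in a theta-ordering $B$ is a block in every circuit containing it, and consistency forces the internal order of $B$ to be the same path in all such circuits; therefore an end $\beta$ of that path has exactly one neighbour outside $B$ in each circuit through $B$, and consistency pins this external neighbour down whenever two such circuits share an element. Now suppose $B,C$ are parallel paths of size at least two. Using the $U_{2,m}$-skeleton provided by Theorem~\ref{nonbinchar}, with $m\geq 4$, I would locate three series classes disjoint from the class occupied by $B$, giving three circuits $B\cup D_{1}$, $B\cup D_{2}$, $B\cup D_{3}$ whose outside parts pairwise meet in a single class; each of these circuits carries a theta-graph through $C$ that forces $B$ to be a block. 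Tracking the forced external neighbour of $\beta$ through the three overlapping circuits then yields a contradiction, exactly as in the explicit computation for the order-two balanced series extension of $U_{2,4}$ with one class doubled. This lemma is the main obstacle: the picture is clear, but producing the three overlapping circuits, and the ensuing contradiction, for an arbitrary member of the orderable family rather than for the motivating example requires careful bookkeeping of how $B$ and $C$ sit inside the skeleton.

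Granting the lemma, I would finish by reorganising the construction. A size-one parallel-path addition is just an ordinary parallel extension and creates no parallel path of size at least two, a size-$\geq 2$ addition creates one at once, and a balanced series extension carried out after a parallel element is present lengthens that parallel pair into parallel paths of size at least two; as these paths survive all later operations, the lemma forces every parallel-path addition to have size one and forbids any balanced series extension once a parallel element has appeared. Hence the operations may be taken in the order ``all balanced series extensions first, then all parallel extensions.'' The balanced series extensions compose to a single balanced series extension in which every series class acquires the common size equal to the product of the orders, and the subsequent parallel extensions then present $M$ as a parallel extension of a balanced series extension of $U_{2,m}$, as required.
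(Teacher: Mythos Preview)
Your high-level architecture matches the paper's: invoke Theorem~\ref{nonbinchar} and Lemma~\ref{concurrencyexchange} to write $M$ as a balanced series extension followed by parallel-path additions, then argue that theta-orderability forbids any addition of size at least two. Where you diverge is in the execution of both directions, and in the ``only if'' direction you leave the decisive step as an admittedly unfinished sketch.

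For the ``if'' direction, the paper passes to $\si(M)$ via Proposition~\ref{tbasics}(iv) and then observes that a series extension of $U_{2,n}$ with $n\geq 4$ contains \emph{no theta-graphs whatsoever}: any coloop-free restriction is itself a series extension of some $U_{2,k}$, never of $U_{1,3}$. Hence every consistent ordering is vacuously a theta-ordering, and Lemma~\ref{balancebringsorder} finishes. Your classification of the theta-graphs in the non-simplified matroid is correct but unnecessary once you reduce to the simple case.

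For the ``only if'' direction, your endpoint-rigidity argument is effectively an ad~hoc rederivation of a consequence of Proposition~\ref{ordimpbalanced}(i), and that proposition is exactly what the paper uses to make the step immediate. After reducing to simple $M$, suppose the first parallel-path addition places $P'$ parallel to $P\subseteq S$ with $|P|\geq 2$, where $S$ is a series class of the underlying balanced series extension $B$. Since $B$ is a restriction of $M$, any theta-ordering of $M$ restricts to a consistent ordering of $B$; Proposition~\ref{ordimpbalanced}(i) says every $S$-block in \emph{any} consistent ordering of $B$ is a singleton, so no two elements of $P$ are adjacent in any circuit of $B$, and hence $P$ is not a block in $M$. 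But $M$ has a theta-graph with $P$ and $P'$ as theta-arcs (take $P\cup P'$ together with any circuit of $B$ through $S$), contradicting theta-orderability. This replaces the ``careful bookkeeping'' you anticipate with a one-line appeal to a result already established; there is no need to track external neighbours through three overlapping circuits, nor to worry about how $B$ and $C$ sit inside the skeleton after several additions.
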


In Section~\ref{prelims}, after some preliminaries, we prove Theorem~\ref{exminororderable}. The proof of our main result, Theorem~\ref{nonbinchar}, is in Section~\ref{nonbinary}, and Theorem~\ref{4connreg} is proved in Section~\ref{progress}.

\section{Preliminaries}
\label{prelims}

Our first proposition collects some basic properties of orderability. These properties will be used frequently and often implicitly. We omit their straightforward proofs.

\begin{proposition}
\label{basics}
Let $M$ be a matroid.
\begin{enumerate}
\item[(i)]{If $M$ is orderable, then $M\del e$ is orderable for all $e\in E(M)$.}
\item[(ii)]{If $r(M)\leq 2$, then $M$ is orderable.}
\item[(iii)]{$M$ is orderable if and only if the connected components of $M$ are orderable.}
\item[(iv)]{$M$ is orderable if and only if $\si(M)$ is orderable.}
\end{enumerate}
\end{proposition}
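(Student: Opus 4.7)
The plan is to handle each of the four parts separately; the first three are essentially bookkeeping, and the fourth requires a small blow-up construction.

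For (i), the key observation is that the circuits of $M\del e$ are precisely the circuits of $M$ that avoid $e$, so restricting any consistent ordering of $M$ to those circuits immediately yields a consistent ordering of $M\del e$. For (iii), each circuit of $M$ lies entirely in a single connected component, because elements in distinct components share no circuit; hence the consistency condition decouples across the components, and both directions follow by taking the union (respectively, restriction) of orderings on each component. For (ii), if $r(M)\leq 2$ then every circuit has at most three elements, and in any cyclic arrangement of one, two, or three points every pair of points is adjacent, so any assignment of orderings to the circuits is trivially consistent.

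For (iv), the forward direction follows from (i), since $\si(M)$ is obtained from $M$ by a sequence of deletions. For the converse, I would fix a consistent ordering of $\si(M)$ and extend it to $M$ by blowing up each element to its parallel class. The matroid fact that makes this canonical is that no circuit contains two parallel elements, since such a circuit would properly contain a $2$-element circuit. Consequently every circuit of $M$ of size at least three corresponds to a unique circuit of $\si(M)$ and can be assigned the ordering inherited through this correspondence; $2$-element parallel circuits may be ordered arbitrarily since their two elements are automatically adjacent, and loops are $1$-element circuits that play no role. Consistency then transfers from $\si(M)$ to $M$ because two elements of $M$ in distinct parallel classes are adjacent in some circuit of $M$ if and only if their representatives are adjacent in the corresponding circuit of $\si(M)$.

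The main obstacle across the four parts is concentrated in (iv), but it is notational rather than conceptual: once the blow-up correspondence between the circuits of $M$ of size at least three and the circuits of $\si(M)$ is in place, consistency transfers directly, and the only subtlety is verifying separately that the $2$-element parallel circuits and the loops behave correctly.
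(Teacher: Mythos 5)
Your proofs of all four parts are correct, and they supply exactly the routine arguments that the paper has in mind when it says ``We omit their straightforward proofs'' --- in particular, your handling of (iv) via the observation that a circuit of size at least three meets each parallel class at most once, so that orderings pull back from $\si(M)$ along the circuit correspondence, is the standard argument. Nothing further is needed.
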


Next, we note a partial converse to Proposition~\ref{graphicimpord}.

\begin{proposition}
\label{hamgraph}
If $M$ is an orderable binary matroid with a spanning circuit, then $M$ is graphic.
\end{proposition}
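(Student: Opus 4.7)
The plan is to use the consistent cyclic ordering on a spanning circuit $C$ of $M$ to reconstruct $M$ as the cycle matroid of a graph built from a Hamilton cycle with chords. By Proposition~\ref{basics}(iv), I may assume $M$ is simple; write the cyclic ordering on $C$ as $(e_1, e_2, \dots, e_n)$ with $n = r(M)+1$.

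First I would pin down, for each $f \in E(M)\setminus C$, the structure of the circuits of $M|(C\cup\{f\})$. Since $C$ spans $M$, this restriction has rank $n-1$ on $n+1$ elements, so, because $M$ is binary, its cycle space over $\mathbb{F}_2$ has dimension two and contains three non-zero vectors. One is $C$; the others must contain $f$, since $C$ is the only non-empty cycle inside $C$. Simplicity rules out $\{f\}$ being a cycle, so each remaining vector is a single circuit, say $D_f$ and $D_f'$. Their symmetric difference is a non-zero cycle inside $C$, hence equal to $C$, which forces $D_f = \{f\}\cup A_f$ and $D_f' = \{f\}\cup (C\setminus A_f)$ for some $A_f \subseteq C$.

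Next I would use consistency to show that $A_f$ is a block in the cyclic ordering of $C$. In the ordering of $D_f$, the elements of $A_f$ form a path from one neighbor of $f$ to the other; consecutive elements of this path are adjacent in $D_f$, and hence, by consistency, adjacent in $C$. A sequence of distinct elements of $C$ with consecutive pairs adjacent in $C$ is a simple walk in the $n$-gon underlying $C$, hence a path, so $A_f$ is a set of consecutive elements of $C$. Then define $G$ by placing vertices $v_0, v_1, \dots, v_{n-1}$ on an $n$-cycle with $e_i = v_{i-1}v_i$ (indices mod $n$), and for each $f \in E(M)\setminus C$ add $f$ as a chord between the two endpoints of the arc $A_f$.

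To conclude $M(G) = M$, I would compare the two binary matroids using the basis $B = C\setminus\{e_n\}$, which is a basis of $M$ (since $C$ is a circuit of size $n$) and a spanning tree of $G$ (a Hamilton path). The fundamental circuit of $e_n$ with respect to $B$ is $C$ in both matroids. For $f \notin C$, exactly one of $D_f, D_f'$ avoids $e_n$; this is the fundamental circuit of $f$ in $M$, and in $M(G)$ the fundamental cycle of the chord $f$ coincides with the same set because the unique $B$-path between the endpoints of $f$ runs along the arc of $C$ that avoids $e_n$. Since two binary matroids on a common ground set with identical fundamental circuits with respect to a common basis are equal, $M = M(G)$ and $M$ is graphic. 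The crux is the first step: it is here that both binariness (to cap the cycle space at dimension two) and simplicity (to exclude $\{f\}$) are used decisively; everything after that is largely translation between the matroid and graph pictures.
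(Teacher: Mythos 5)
Your proof is correct and follows essentially the same route as the paper: build a graph on a Hamilton cycle ordered as $C$, attach each remaining element as a chord using the fact that consistency forces $C_f-f$ to be a block, and conclude by matching fundamental circuits of two binary matroids over the common basis $C-e$. The only difference is one of detail: you explicitly justify the block property (which the paper asserts without proof) via the two-dimensional cycle space of $M|(C\cup\{f\})$, which is a welcome elaboration rather than a departure.
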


\begin{proof}
Let $C$ be a spanning circuit of $M$ and $e$ be an element in $C$. Fix a consistent ordering of $M$, and take a standard binary representation of $M$ with respect to the basis $C-e$. Now construct a graph $G$ beginning with a cycle having edge set $C$, ordered consistently with the fixed ordering of $M$. Now, for each element $f$ of $E(M)-C$, let $C_f$ be the fundamental circuit of $f$ with respect to $C-e$. Because $C_f-f$ is a block in the ordering, we may add an edge $f$ to $G$ as a chord of $C$ so that it forms a cycle with edge set $C_f$. The result is a graph whose cycle matroid has ground set $E(M)$, has $C-e$ as a basis, and has the same fundamental circuits with respect to this basis as $M$. Since $M$ and $M(G)$ are binary, we deduce that $M=M(G)$.
\end{proof}

We now note a necessary condition for a matroid to be orderable, along with some consequences of this condition.

\begin{proposition}
\label{mickeymouse}
Let $M$ be a simple matroid and $X$ be a subset of $E(M)$ with $\vert X\vert\geq 3$. If there are elements $e$ and $f$ in $E(M)-X$ such that $X\cup e$ and $X\cup f$ are both circuits of $M$, then $M$ is not orderable.
\end{proposition}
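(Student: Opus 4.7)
The plan is to assume for contradiction that $M$ has a consistent ordering and to derive, via strong circuit elimination, two three-element circuits whose existence contradicts the fact that $C_1 := X \cup e$ is a circuit.

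First, I would compare the orderings on $C_1$ and $C_2 := X \cup f$. Write the ordering of $C_1$ as $(x_1, x_2, \dots, x_k, e)$ with $k = |X| \geq 3$. The $k-1$ pairs $\{x_i, x_{i+1}\}$ are exactly the adjacencies among elements of $X$ in $C_1$, and by consistency they are also exactly the adjacencies among $X$ in $C_2$. Since these $k-1$ pairs form a path graph on $X$ whose Hamilton path is unique up to reversal, the linear orders on $X$ obtained by deleting $e$ from $C_1$ and deleting $f$ from $C_2$ coincide. In particular, the neighbors of $f$ in the ordering of $C_2$ are $x_1$ and $x_k$, the same as the neighbors of $e$ in the ordering of $C_1$.

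Next, I would apply strong circuit elimination at $x_1 \in C_1 \cap C_2$, using $e \in C_1 - C_2$, to obtain a circuit $C'$ with $e \in C' \subseteq (C_1 \cup C_2) - x_1$. Because $C_1$ cannot properly contain a circuit, $f \in C'$. I would then pin $C'$ down using consistency: the two neighbors of $e$ in the ordering of $C'$ must lie in $\{x_1, x_k, f\}$, because $e$ is non-adjacent in $C_1$ to every element of $X - \{x_1, x_k\}$ while $e$ and $f$ share no prior common circuit; since $x_1 \notin C'$, these neighbors must be $x_k$ and $f$. A symmetric argument forces the neighbors of $f$ in $C'$ to be $e$ and $x_k$, so the two neighbors of $x_k$ in $C'$ are $e$ and $f$. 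Consistency with $C_1$, in which $x_k$ is adjacent to $x_{k-1}$, then forces $x_{k-1} \notin C'$, and a short descending induction removes every $x_j$ with $2 \leq j \leq k-1$ from $C'$. Hence $C' = \{x_k, e, f\}$, and a symmetric application at $x_k$ shows that $\{x_1, e, f\}$ is also a circuit. Applying circuit elimination to $\{x_1, e, f\}$ and $\{x_k, e, f\}$ at $f$ then yields a circuit contained in $\{x_1, x_k, e\}$, which is a proper subset of $C_1$ since $k \geq 3$, contradicting the fact that $C_1$ is a circuit.

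The main obstacle will be the step that pins down $C' = \{x_k, e, f\}$: consistency must be invoked in both directions, so that pairs adjacent in $C_1$ or $C_2$ remain adjacent in $C'$ while pairs non-adjacent in $C_1$ or $C_2$ remain non-adjacent in $C'$. This two-sided use of consistency is precisely what simultaneously fixes the neighbors of $e$, $f$, and $x_k$ and inductively expels every intermediate $x_j$ from $C'$.
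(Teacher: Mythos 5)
Your proposal is correct and takes essentially the same route as the paper's proof: identify the two common neighbours $x_1,x_k$ of $e$ (and hence of $f$) inside $X$, use strong circuit elimination to pin down the two triangles $\{x_k,e,f\}$ and $\{x_1,e,f\}$, and eliminate once more to obtain a circuit properly contained in $X\cup e$ or $X\cup f$. The only detail worth making explicit is that simplicity forces $\vert C'\vert\geq 3$, so that $e$ really does have two distinct neighbours in $C'$; otherwise the argument matches the paper's step for step.
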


\begin{proof}
Assume to the contrary that $M$ has a consistent ordering. Notice that the ordering of $X\cup f$ is obtained from that of $X\cup e$ by replacing $f$ with $e$. Let $a$ and $b$ be the elements in $X$ that are adjacent to $e$. Using strong circuit elimination on $X\cup e$ and $X\cup f$, we obtain a circuit $C\subseteq X\cup \{e,f\}$ containing $e$ but not $a$, and another $C'\subseteq X\cup \{e,f\}$ containing $f$ but not $b$.

As $C$ is not properly contained in either $X\cup e$ or $X\cup f$, it must contain both $e$ and $f$. Further, $M$ is simple, so $C\cap X$ is nonempty. Since $a$ and $b$ are the only elements in $X$ adjacent to $e$ or $f$, it follows that $C=\{e,f,b\}$. By symmetry, $C'=\{e,f,a\}$.

Circuit elimination applied to $C$ and $C'$ now yields a circuit $D$ that does not contain $e$. Then $D\subseteq \{a,b,f\}$. Since $\vert X\vert\geq 3$, it follows that $D$ is a proper subset of $X\cup f$, a contradiction.
\end{proof}

\begin{corollary}
\label{relaxations}
Let $M$ be a matroid of rank at least three and $X$ be a circuit-hyperplane of $M$. If $E(M)-X$ is not a parallel class of $M$, then the matroid obtained from $M$ by relaxing $X$ is not orderable.
\end{corollary}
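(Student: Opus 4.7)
The plan is to apply Proposition~\ref{mickeymouse} to the simplification $\si(M')$ of the relaxed matroid $M'$, using $X$ itself as the common part of two circuits $X\cup e$ and $X\cup f$.

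First I would collect three structural facts. Because $X$ is a circuit-hyperplane and $r(M)\geq 3$, we have $|X|=r(M)\geq 3$. Since $X$ is a flat, any element parallel to some $x\in X$ lies in $\cl(X)=X$, so no element of $E(M)-X$ is parallel to an element of $X$; and no two elements of $X$ are parallel in $M$, since such a pair would be a $2$-element circuit properly contained in the circuit $X$. Consequently, every parallel class of $M$ is either a singleton of $X$ or is entirely contained in $E(M)-X$. Finally, since $X$ is a proper flat, $E(M)-X$ is nonempty, and the hypothesis that it is not a parallel class forces $|E(M)-X|\geq 2$ and the existence of $e,f\in E(M)-X$ in distinct parallel classes of $M$.

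The heart of the proof is the claim that for every $y\in E(M)-X$, the set $X\cup y$ is a circuit of $M'$. Because relaxing $X$ makes $X$ a basis of $M'$, the set $X\cup y$ is dependent in $M'$. For any $x\in X$, the set $(X-x)\cup y$ has size $|X|$ and is independent in $M$ since $y\notin X=\cl(X-x)$ (using that $X$ is a circuit, so $X-x$ spans $x$, and a flat, so $\cl(X)=X$). As the relaxation adds only the single new independent set $X$, the set $(X-x)\cup y$ remains independent in $M'$. Hence every proper subset of $X\cup y$ is independent in $M'$, so $X\cup y$ is a circuit of $M'$.

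Since $|X|\geq 3$, no $2$-element circuits are created or destroyed by the relaxation, so the parallel classes of $M$ and $M'$ coincide. Therefore $\si(M')$ retains all of $X$ together with a transversal $T$ of the parallel classes in $E(M)-X$, and I can take $T$ to contain both $e$ and $f$. The circuits $X\cup e$ and $X\cup f$ of $M'$ lie inside $X\cup T=E(\si(M'))$ and hence remain circuits there. Proposition~\ref{mickeymouse} then shows $\si(M')$ is not orderable, and Proposition~\ref{basics}(iv) transfers this to $M'$. The step I expect to need the most care is the identification of $X\cup y$ as a circuit of $M'$, which hinges on a precise understanding of exactly which independent sets the relaxation introduces.
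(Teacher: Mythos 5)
Your proof is correct and follows exactly the intended route: the paper states this as an unproved corollary of Proposition~\ref{mickeymouse}, and your argument supplies the expected details, namely that relaxation turns $X\cup y$ into a circuit for every $y\in E(M)-X$, that all parallel pairs of $M$ lie in $E(M)-X$ so the hypothesis yields two non-parallel such elements, and that passing to $\si(M')$ legitimizes the application of Proposition~\ref{mickeymouse}.
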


\begin{corollary}
\label{thelonelywhirl}
The only orderable whirl is $U_{2,4}$.
\end{corollary}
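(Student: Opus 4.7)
The plan is to reduce the statement to a direct application of Corollary~\ref{relaxations} together with Proposition~\ref{basics}(ii), once we recognize the whirl $\mathcal{W}^n$ as the relaxation of the cycle matroid of the wheel $W_n$ at its rim circuit-hyperplane.

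First I would dispose of the easy case: the whirl $\mathcal{W}^2$ is exactly $U_{2,4}$, which has rank $2$ and is therefore orderable by Proposition~\ref{basics}(ii). It remains to show that $\mathcal{W}^n$ is not orderable for any $n\geq 3$.

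For this, I would recall the standard description of $\mathcal{W}^n$ as follows: start from $M(W_n)$, the cycle matroid of the $n$-spoked wheel, whose ground set partitions into the $n$-element rim cycle $X$ and the $n$ spoke edges $E(M(W_n))-X$; the rim $X$ is a circuit-hyperplane of $M(W_n)$, and $\mathcal{W}^n$ is obtained from $M(W_n)$ by relaxing $X$. The point is now that for $n\geq 3$, the spokes $E(M(W_n))-X$ meet only at the hub vertex, so no two of them are in series or in parallel in $M(W_n)$; in particular $E(M(W_n))-X$ is not a parallel class. Since $r(M(W_n))=n\geq 3$, Corollary~\ref{relaxations} applies and tells us that the relaxation $\mathcal{W}^n$ is not orderable.

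There is really no obstacle here beyond making the identification of $\mathcal{W}^n$ with the relaxation of $M(W_n)$ at the rim; the rest is immediate from the already-established corollary. The only thing to double-check is that, for $n\geq 3$, the complement of the rim in $M(W_n)$ is genuinely not a parallel class, which is clear because distinct spokes are connected only at the hub and hence span a rank-$n$ independent set.
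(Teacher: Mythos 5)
Your argument is correct and is exactly the derivation the paper intends (the corollary is stated without proof, immediately after Corollary~\ref{relaxations}): identify $\mathcal{W}^n$ as the relaxation of $M(W_n)$ at the rim circuit-hyperplane, note the spokes form a basis and hence not a parallel class for $n\geq 3$, and handle $\mathcal{W}^2=U_{2,4}$ via Proposition~\ref{basics}(ii). No issues.
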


We now prove Theorem~\ref{exminororderable}, whose proof relies on the next lemma and its corollary. The following technical property facilitates the statements of these results. A matroid $M$ has the $(e,f,g)$-\emph{property} if
\begin{itemize}
\item[(i)]{$M$ has a circuit containing $\{e,f,g\}$;}
\item[(ii)]{$e$, $f$, and $g$ are distinct; and}
\item[(iii)]{$M$ has a circuit $D$ containing $f$ but neither $e$ nor $g$ and, with the exception of at most one $d$ in $D$, there is a circuit of $M$ containing $\{e,f,g,d\}$.}
\end{itemize}

\begin{lemma}
\label{efglemma}
If a matroid $M$ has the $(e,f,g)$-property, then $f$ is not adjacent to both $e$ and $g$ in a consistent ordering of $M$.
\end{lemma}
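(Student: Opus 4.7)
The plan is to argue by contradiction. Suppose $M$ has a consistent ordering in which $f$ is adjacent to both $e$ and $g$.

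The first step is to pin down $f$'s position in every circuit through $\{e,f,g\}$. By (i), there is a circuit $C$ of $M$ containing $\{e,f,g\}$, and $|C|\geq 3$. By consistency applied to the adjacent pairs $\{e,f\}$ and $\{f,g\}$, both $e$ and $g$ are cyclic neighbors of $f$ in the ordering of $C$; since $f$ has only two cyclic neighbors in $C$, those neighbors are exactly $e$ and $g$, and no other element of $C$ is adjacent to $f$. The same argument applies to every circuit of $M$ containing $\{e,f,g\}$.

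Now take the circuit $D$ furnished by (iii), and let $d$ be a cyclic neighbor of $f$ in the ordering of $D$. Since $e,g\notin D$, we have $d\in D\setminus\{f\}$ and $d\notin\{e,g\}$. Suppose $d$ is not the exceptional element allowed by (iii). Then there is a circuit $C_d$ containing $\{e,f,g,d\}$; by the first step, the neighbors of $f$ in the ordering of $C_d$ are exactly $e$ and $g$, so $d$ is not adjacent to $f$ in $C_d$. But $\{f,d\}\subseteq D\cap C_d$ and $f,d$ are adjacent in the ordering of $D$, so consistency forces them to be adjacent in $C_d$, a contradiction. Hence every cyclic neighbor of $f$ in $D$ must be the single exception allowed by (iii). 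When $|D|\geq 3$, the element $f$ has two distinct cyclic neighbors in $D$, so both would have to coincide with that one exception, which is impossible; this gives the contradiction.

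The point I expect to be fiddliest is the borderline case $|D|=2$, where $D=\{f,d^{\ast}\}$ with $f$ and $d^{\ast}$ parallel and $d^{\ast}$ forced to be the exception (no circuit can properly contain $\{f,d^{\ast}\}$). Here the counting of cyclic neighbors collapses. The remedy I have in mind is to apply strong circuit elimination to $C$ and $D$ at their common element $f$: for each of $y=e$ and $y=g$ this produces a circuit $C_y\subseteq (C\setminus\{f\})\cup\{d^{\ast}\}$ containing $y$ and $d^{\ast}$ (the containment of $d^{\ast}$ being forced since $C\setminus\{f\}$ is independent). One then uses these auxiliary circuits together with consistency to show that $d^{\ast}$'s adjacency requirements are incompatible with $f$ lying between $e$ and $g$ in the global ordering, completing the contradiction.
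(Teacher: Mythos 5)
Your main argument is exactly the paper's: in any circuit through $\{e,f,g\}$, consistency forces the two cyclic neighbours of $f$ to be precisely $e$ and $g$, so any neighbour $d$ of $f$ in $D$ that admits a circuit through $\{e,f,g,d\}$ gives a contradiction, and at most one neighbour can be the permitted exception. For $|D|\geq 3$ this is complete and coincides with the paper's two-sentence proof, which speaks of two distinct neighbours $d_1,d_2$ of $f$ in $D-f$ and is therefore making the same implicit assumption $|D|\geq 3$ that you make explicit.

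The $|D|=2$ branch, however, cannot be repaired along the lines you sketch, because the statement is genuinely false there. Take $M$ to be the cycle matroid of a triangle with edges $e,f,g$ together with an edge $d^{\ast}$ parallel to $f$. Then $C=\{e,f,g\}$ and $D=\{f,d^{\ast}\}$ witness the $(e,f,g)$-property with $d^{\ast}$ as the single permitted exception, yet $M$ is graphic, hence consistently orderable, and $f$ is necessarily adjacent to both $e$ and $g$ in the triangle $\{e,f,g\}$. Your circuit elimination only produces the circuit $\{e,g,d^{\ast}\}$, whose ordering is compatible with everything else, so no contradiction is available. The honest resolution is that the lemma implicitly assumes (and the paper only ever applies it with) a circuit $D$ in which $f$ has two distinct neighbours, i.e.\ $|D|\geq 3$; the case $|D|=1$ cannot occur since a loop $f$ would violate condition (i). You should state that hypothesis rather than promise a fix for the degenerate case.
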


\begin{proof}
Suppose $M$ has the $(e,f,g)$-property and $f$ is adjacent to both $e$ and $g$. Then, in the circuit $D$ of condition (iii), $f$ is adjacent to elements $d_1$ and $d_2$ of $D-f$. But $M$ has a circuit containing $\{e,f,g,d_i\}$ for some $i$ in $\{1,2\}$, a contradiction.
\end{proof}

\begin{corollary}
\label{efgcor}
Let $C$ be a circuit of a matroid $M$. Suppose there is an element $c$ of $C$ so that $M$ has the $(e,c,g)$-property for every choice of $e$ and $g$ in $C-c$. Then $M$ does not have a consistent ordering.
\end{corollary}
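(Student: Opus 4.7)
The approach is a direct contradiction argument that leverages Lemma~\ref{efglemma}. Suppose, aiming for a contradiction, that $M$ admits a consistent ordering. Since the stated hypothesis is non-vacuous only when two distinct elements can be chosen from $C-c$, we have $|C|\geq 3$, so the cyclic ordering induced on the circuit $C$ assigns exactly two distinct neighbors to $c$.

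First I would designate those two neighbors as $e$ and $g$; by construction $\{e,g\}\subseteq C-c$ with $e\neq g$. By the hypothesis applied to this particular pair, $M$ has the $(e,c,g)$-property. Applying Lemma~\ref{efglemma} with $f=c$ then asserts that $c$ cannot be simultaneously adjacent to both $e$ and $g$ in the consistent ordering. This directly contradicts the choice of $e$ and $g$ as the cyclic neighbors of $c$ within the ordering of $C$, finishing the proof.

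I expect no real obstacle here: the reasoning is essentially a one-step instantiation of the preceding lemma. The only bookkeeping is to confirm that, once the particular pair $\{e,g\}$ is selected as the cyclic neighbors of $c$, the universally quantified hypothesis supplies the needed $(e,c,g)$-property, and that adjacency in the ordering of $C$ transfers to adjacency in the global consistent ordering of $M$; both observations are immediate from the definitions.
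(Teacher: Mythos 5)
Your proof is correct and is exactly the intended argument: the paper states this corollary without proof as an immediate consequence of Lemma~\ref{efglemma}, and your one-step instantiation (taking $e$ and $g$ to be the two cyclic neighbors of $c$ in the ordering of $C$ and invoking the hypothesis for that pair) is precisely how it follows.
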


\begin{proof}[Proof of Theorem~\ref{exminororderable}]
Since graphic matroids are orderable and the class of graphic matroids is minor-closed, (i) implies (ii)-(iv). Let $\mathcal{S}$ be the set
\begin{equation*}
\{F_7, F_7^\ast, M^\ast(K_5), M^\ast(K_{3,3}), M^\ast(K_{3,3}'), M^\ast(K_{3,3}''), M^\ast(K_{3,3}'''), R_{10}\}.
\end{equation*}
By results of Tutte~\cite{tuttekurwag} and Bixby~\cite{bixbyreg,bixbykurwag}, $\mathcal{S}$ contains all binary matroids that are excluded minors, excluded series minors, or excluded parallel minors for the class of graphic matroids. Thus we can prove that (i) follows from each of (ii)-(iv) by showing that none of the matroids in $\mathcal{S}$ is orderable.

\begin{figure}
\centering

\begin{tikzpicture}[style=thick]

\foreach \x in {0,1,2} \draw[fill=black] (90+120*\x:2) circle (0.1);
\foreach \x in {0,1,2} \draw[fill=black] (-90+120*\x:1) circle (0.1);
\draw[fill=black] (0,0) circle (0.1);

\foreach \x in {0,1,2} \draw (90+120*\x:2)--(-90+120*\x:1) (90+120*\x:2)--(210+120*\x:2);
\draw (150:1).. controls (-90:1.5) ..(30:1);

\draw (60:0.4) node {$3$};
\draw (90:2.3) node {$1$};
\draw (210:2.3) node {$5$};
\draw (330:2.3) node {$7$};
\draw (150:1.3) node {$2$};
\draw (-90:1.3) node {$6$};
\draw (30:1.3) node {$4$};

\end{tikzpicture}
\caption{The matroid $F_7$.}
\label{f7}
\end{figure}

Let $F_7$ be labelled as in Figure~\ref{f7}. Using the element 1 in the circuit $\{1,2,3,4\}$, Corollary~\ref{efgcor} gives that $F_7$ has no consistent ordering.

\begin{figure}[b]
\centering
\begin{tikzpicture}[style=thick]

\draw[fill=black] (-3.5,2.5) circle (0.1);
\draw (-3.55,2.5) node[left] {1};
\draw[fill=black] (-1.75,3.75) circle (0.1);
\draw (-1.75,3.75) node[above left] {2};
\draw[fill=black] (1.75,3.75) circle (0.1);
\draw (1.75,3.75) node[above right] {3};
\draw[fill=black] (3.5,2.5) circle (0.1);
\draw (3.55,2.5) node[right] {4};
\draw[fill=black] (-1.75,2.25) circle (0.1);
\draw (-1.75,2.25) node [below left] {5};
\draw[fill=black] (-1.15,3.16) circle (0.1);
\draw (-1.2,3.55) node {6};
\draw[fill=black] (1.15,3.16) circle (0.1);
\draw (1.25,2.8) node {7};

\draw (-5,4.5)--(0,5.5)--(5,4.5)--(5,0.5)--(0,1.5)--(-5,0.5)--(-5,4.5);
\draw (0,1.5)--(0,5.5);
\draw (-3.5,2.5)--(0,5) (-3.5,2.5)--(0,2);
\draw (3.5,2.5)--(0,5);
\draw (-3.5,2.5)--(0,3.5)--(3.5,2.5);
\draw (1.75,3.75)--(0,2)--(-1.75,3.75);
\draw (-1.75,2.25)--(0,5);
\draw (-1.75,3.75).. controls (-2,1.75) ..(0,3.5);

\end{tikzpicture}
\caption{The matroid $F_7^\ast$.}
\label{f7star}
\end{figure}

Let $F_7^\ast$ be labelled as in Figure~\ref{f7star}. Consider the circuits $C_1=\{1,2,3,4\}$, $C_2=\{1,3,5,7\}$, and $C_3=\{2,4,5,7\}$.
The ordering of a four-element circuit is uniquely determined by a single pair of non-adjacent elements, and the automorphism group of $F_7^\ast$ is doubly transitive. Thus we may assume that $C_1$ has the ordering $(1\ 2\ 3\ 4)$.

Since 1 and 3 are not adjacent in $C_1$, it follows that $C_2$ has the ordering $(1\ 5\ 3\ 7)$. Thus 5 and 7 are non-adjacent, so $C_3$ has the ordering $(2\ 5\ 4\ 7)$. However, the elements of the set $\{1,2,5\}$ are now pairwise adjacent, so the circuit $\{1, 2, 5, 6\}$ cannot be ordered. Thus $F_7^\ast$ has no consistent ordering.

\begin{figure}
\centering
\begin{tikzpicture}[style=thick]

\foreach \x in {0,1,2,3,4} \draw (90+72*\x:2)--(162+72*\x:2);
\draw (90:2)--(234:2)--(18:2)--(162:2)--(306:2)--(90:2);

\foreach \x in {0,1,2,3,4} \draw[fill=white] (90 + 72*\x:2) circle (0.1);

\draw (-90:1.9) node {0};
\draw (198:1.9) node {1};
\draw (185:1.3) node {2};
\draw (139:1.3) node {3};
\draw (126:1.9) node {4};
\draw (113:1.3) node {5};
\draw (67:1.3) node {6};
\draw (54:1.9) node {7};
\draw (-5:1.3) node {8};
\draw (-18:1.9) node {9};

\end{tikzpicture}
\caption{The graph $K_5$.}
\label{k5}
\end{figure}

Let $M^\ast(K_5)$ be labelled as in Figure~\ref{k5}, and assume that $M^\ast(K_5)$ has a consistent ordering. Let $C$ be the circuit $\{1,2,3,4\}$. By symmetry, we may assume its ordering is $(1\ 2\ 3\ 4)$. This ordering and the circuit $\{1,2,4,7,8,9\}$ give that 1 and 8 are not adjacent, so the circuit $\{0,1,5,8\}$ must be ordered $(0\ 1\ 5\ 8)$. Similarly, the circuit $\{1,2,3,5,6,7\}$ gives that 2 and 6 are not adjacent, so $\{0,2,6,9\}$ must be ordered $(0\ 2\ 9\ 6)$. Now 0 is adjacent to 1, 6, and 8 in the circuit $\{0,1,4,6,7,8\}$, a contradiction.

\begin{figure}[h]
\centering
\begin{tikzpicture}[style=thick]

\foreach \x in {0,2,4}
\foreach \z in {0,2,4} \draw (\x,2)--(\z,0);

\foreach \x in {0,2,4}
\foreach \y in {0,2} \draw[fill=white] (\x,\y) circle (0.1);

\draw (-0.2,1.6) node {1};
\draw (0.3,1.4) node {2};
\draw (0.7,1.9) node {3};
\draw (1.5,1.8) node {4};
\draw (1.8,1.4) node {5};
\draw (2.5,1.8) node {6};
\draw (3.3,1.9) node {7};
\draw (3.7,1.4) node {8};
\draw (4.2,1.6) node {9};

\end{tikzpicture}
\caption{The graph $K_{3,3}$.}
\label{k33}
\end{figure}

Let $M^\ast(K_{3,3})$ be labelled as in Figure~\ref{k33}. We shall use Corollary~\ref{efgcor} letting $c$ be the element 1 in the circuit $C=\{1,3,5,8\}$ of $M^\ast(K_{3,3})$. The cases $\{e,g\}=\{3,5\}$ and $\{e,g\}=\{3,8\}$ are symmetric, and the circuits $C$ and $\{1,3,5,7,9\}$ certify that $M$ has the $(3,1,5)$-property with $D=\{1,4,8,9\}$. The circuit $\{1,5,6,8,9\}$ certifies that $M$ has the $(5,1,8)$-property with $D=\{1,2,6,9\}$. Corollary~\ref{efgcor} now implies that $M^\ast(K_{3,3})$ has no consistent ordering.

\begin{figure}[h]
\centering
\begin{tikzpicture}[style=thick]

\foreach \x in {0,2,4}
\foreach \z in {0,2,4} \draw (\x,2)--(\z,0);
\draw (0,0)--(2,0);

\foreach \x in {0,2,4}
\foreach \y in {0,2} \draw[fill=white] (\x,\y) circle (0.1);

\draw (-0.2,1.6) node {1};
\draw (0.3,1.4) node {2};
\draw (0.7,1.9) node {3};
\draw (1.5,1.8) node {4};
\draw (1.8,1.4) node {5};
\draw (2.5,1.8) node {6};
\draw (3.3,1.9) node {7};
\draw (3.7,1.4) node {8};
\draw (4.2,1.6) node {9};
\draw (1,-0.25) node {$a$};

\end{tikzpicture}
\caption{The graph $K_{3,3}'$.}
\label{k33'}
\end{figure}

The next two cases will also use Corollary~\ref{efgcor}. Let $M^\ast(K_{3,3}')$ be labelled as in Figure~\ref{k33'}, and let $c$ be the element 3 in the circuit $C=\{3,6,7,8\}$ of $M^\ast(K_{3,3}')$. The cases $\{e,g\}=\{6,7\}$ and $\{e,g\}=\{6,8\}$ are symmetric, and the circuit $\{2,3,5,6,7,8\}$ certifies that $M$ has the $(6,3,7)$-property with $D=\{1,2,3\}$. The circuit $C$ certifies that $M$ has the $(7,3,8)$-property with $D=\{3,6,9\}$. Corollary~\ref{efgcor} now implies that $M^\ast(K_{3,3}')$ has no consistent ordering.

\begin{figure}[ht]
\centering
\begin{tikzpicture}[style=thick]

\foreach \x in {0,2,4}
\foreach \z in {0,2,4} \draw (\x,2)--(\z,0);
\draw (0,0)--(2,0)--(4,0);

\foreach \x in {0,2,4}
\foreach \y in {0,2} \draw[fill=white] (\x,\y) circle (0.1);

\draw (-0.2,1.6) node {1};
\draw (0.3,1.4) node {2};
\draw (0.7,1.9) node {3};
\draw (1.5,1.8) node {4};
\draw (1.8,1.4) node {5};
\draw (2.5,1.8) node {6};
\draw (3.3,1.9) node {7};
\draw (3.7,1.4) node {8};
\draw (4.2,1.6) node {9};
\draw (1,-0.25) node {$a$};
\draw (3,-0.25) node {$b$};

\end{tikzpicture}
\caption{The graph $K_{3,3}''$.}
\label{k33''}
\end{figure}

Let $M^\ast(K_{3,3}'')$ be labelled as in Figure~\ref{k33''}, and let $c$ be the element 1 in the circuit $C=\{1,3,5,8,a,b\}$ of $M^\ast(K_{3,3}'')$. When $\{e,g\}\subseteq C-3$, the circuit $C$ certifies that $M$ has the $(e,1,g)$-property with $D=\{1,2,3\}$. Each of the remaining cases uses $D=\{1,4,7,a\}$. The cases $\{e,g\}=\{3,5\}$ and $\{e,g\}=\{3,8\}$ are symmetric, and the circuit $\{1,3,5,7,9,a,b\}$ certifies that $M$ has the $(3,1,5)$ property. The circuits $\{1,3,4,6,8,a,b\}$ and $\{1,3,5,7,9,a,b\}$ certify the $(3,1,a)$-property. Finally, the circuit $\{1,3,4,6,8,a,b\}$ certifies the $(3,1,b)$-property. Corollary~\ref{efgcor} now implies that $M^\ast(K_{3,3}'')$ has no consistent ordering.

\begin{figure}[h]
\centering
\begin{tikzpicture}[style=thick]

\foreach \x in {0,2,4}
\foreach \z in {0,2,4} \draw (\x,2)--(\z,0);
\draw (0,0)--(2,0)--(4,0);
\draw (0,0).. controls (2,-0.6) ..(4,0);

\foreach \x in {0,2,4}
\foreach \y in {0,2} \draw[fill=white] (\x,\y) circle (0.1);

\draw (-0.2,1.6) node {1};
\draw (0.3,1.4) node {2};
\draw (0.7,1.9) node {3};
\draw (1.5,1.8) node {4};
\draw (1.8,1.4) node {5};
\draw (2.5,1.8) node {6};
\draw (3.3,1.9) node {7};
\draw (3.7,1.4) node {8};
\draw (4.2,1.6) node {9};
\draw (1.2,0.2) node {$a$};
\draw (2.8,0.2) node {$b$};
\draw (2,-0.7) node {$c$};

\end{tikzpicture}
\caption{The graph $K_{3,3}'''$.}
\label{k33'''}
\end{figure}

Let $M^\ast(K_{3,3}''')$ be labelled as in Figure~\ref{k33'''}. We begin by noting that there must be at least one adjacent pair in the set $\{1,4,7\}$ due to the circuit $\{1,4,7,a,c\}$. By symmetry, we may assume that 1 and 4 are adjacent.

Combining this adjacent pair with the three-element circuits, we get that 2145 is a block in the circuit $\{1,2,4,5,9,b,c\}$. Therefore 4 is not adjacent to 9, $b$, or $c$. This means that, in the circuit $\{3,4,5,9,b,c\}$, we must have 4 adjacent to 3. Using the three-element circuit $\{4,5,6\}$, we now have that 4 is adjacent to 1, 3, and 6. Therefore the circuit $\{1,3,4,6,8,a,b\}$ cannot be ordered consistently, and $M^\ast(K_{3,3}''')$ has no consistent ordering.

\begin{figure}[h]
\centering
\begin{tikzpicture}[style=thick]

\foreach \x in {0,2,4}
\foreach \y in {0,2} \draw (\x-0.15,\y-0.15) rectangle ++(0.3,0.3);

\foreach \x in {0,2,4}
\foreach \z in {0,2,4} \draw (\x,2)--(\z,0);

\foreach \x in {0,2,4}
\foreach \y in {0,2} \draw[fill=white] (\x,\y) circle (0.1);

\draw (-0.2,1.6) node {1};
\draw (0.3,1.4) node {2};
\draw (0.7,1.9) node {3};
\draw (1.5,1.8) node {4};
\draw (1.8,1.4) node {5};
\draw (2.5,1.8) node {6};
\draw (3.3,1.9) node {7};
\draw (3.7,1.4) node {8};
\draw (4.2,1.6) node {9};

\end{tikzpicture}
\caption{A graft corresponding to $R_{10}$.}
\label{k33graft}
\end{figure}

Let $M$ be the graft matroid of $K_{3,3}$ where the graft element $e_{\gamma}$ corresponds to the set of boxed vertices in Figure~\ref{k33graft}. Then $M\cong R_{10}$. Using Corollary~\ref{efgcor} again, let $c$ be the element 1 in the circuit $C=\{1,2,4,5\}$ of $M$. When $\{e,g\}=\{2,4\}$, the circuit $\{1,2,4,6,8,9\}$ certifies the $(2,1,4)$-property when $D=\{1,3,4,6\}$. When $\{e,g\}=\{2,5\}$, the circuit $\{1,2,5,6,7,9\}$ certifies the $(2,1,5)$-property with $D=\{1,3,7,9\}$. Finally, when $\{e,g\}=\{4,5\}$, the circuit $\{1,4,5,6,7,e_\gamma\}$ certifies the $(4,1,5)$-property with $D=\{1,6,8,e_\gamma\}$. Corollary~\ref{efgcor} now implies that $R_{10}$ has no consistent ordering.
\end{proof}

We conclude this section with a pair of examples that indicate the potential difficulty of characterizing orderable binary matroids.

\begin{example}
\label{nonordO1}
This example describes a 12-element orderable series extension of $F_7^\ast$, which we refer to as $O_1$. Thus, the pair $O_1$ and $F_7^\ast$ demonstrates that the class of binary orderable matroids is not closed under the taking of series minors. Let $F_7^\ast$ be labelled as in Figure~\ref{f7star}. We obtain $O_1$ by adding $1'$, $2'$, and $7'$ in series with 1, 2, and 7, respectively, and adding $4'$ and $4''$ in series with 4. Figure~\ref{O1ordering} gives a consistent ordering of the circuits of $O_1$.

\begin{figure}[h]
\centering

$(1\ 5\ 1'\ 2'\ 6\ 2)$\hspace{0.5cm}
$(1\ 5\ 1'\ 7'\ 3\ 7)$\hspace{0.5cm}
$(2\ 6\ 2'\ 7'\ 3\ 7)$\hspace{0.5cm}
$(3\ 4\ 5\ 4'\ 6\ 4'')$\\
\ \\
$(1\ 4'\ 2'\ 1'\ 4\ 3\ 4''\ 2)$\hspace{0.5cm}
$(1\ 7\ 4\ 1'\ 7'\ 4''\ 6\ 4')$\hspace{0.5cm}
$(2\ 7\ 4\ 5\ 4'\ 2'\ 7'\ 4'')$\\

\, 

\caption{A consistent ordering of $O_1$.}
\label{O1ordering}
\end{figure}
\end{example}

\begin{example}
\label{nonordO2}
Let $K_5$ be labelled as in Figure~\ref{k5}. We obtain a regular, non-graphic matroid $O_2$ from $M^\ast(K_5)$ by adding elements $0'$ and $2'$ in series with 0 and 2, respectively. Figure~\ref{O2ordering} gives a consistent ordering of $O_2$.

\begin{figure}[h]
\centering

$(4\ 6\ 5\ 7)$\hspace{0.5cm}
$(2'\ 1\ 2\ 6\ 5\ 8\ 9)$\hspace{0.5cm}
$(0'\ 1\ 0\ 9\ 3\ 4\ 6)$\hspace{0.5cm}
$(2'\ 1\ 2\ 4\ 7\ 8\ 9)$\\
\ \\

$(3\ 7\ 8\ 9)$\hspace{0.5cm}
$(2\ 1\ 2'\ 3\ 7\ 5\ 6)$\hspace{0.5cm}
$(0'\ 1\ 0\ 8\ 7\ 4\ 6)$\hspace{0.5cm}
$(0'\ 1\ 0\ 9\ 3\ 7\ 5)$\\
\ \\

$(0'\ 2'\ 3\ 4\ 2\ 0\ 8\ 5)$\hspace{0.5cm}
$(0'\ 2'\ 9\ 0\ 2\ 4\ 7\ 5)$\hspace{0.5cm}
$(2'\ 0'\ 6\ 2\ 0\ 8\ 7\ 3)$\\
\ \\

$(2\ 1\ 2'\ 3\ 4)$\hspace{0.5cm}
$(0\ 1\ 0'\ 5\ 8)$\hspace{0.5cm}
$(0\ 2\ 6\ 0'\ 2'\ 9)$\hspace{0.5cm}
$(3\ 4\ 6\ 5\ 8\ 9)$\\
\,

\caption{A consistent ordering of $O_2$.}
\label{O2ordering}
\end{figure}
\end{example}

\section{A Characterization of Non-Binary Orderable Matroids}
\label{nonbinary}

In this section, we prove Theorem~\ref{nonbinchar}. We begin by finding the orderable series extensions of uniform matroids and their consistent orderings. These results allow us to characterize the non-binary orderable matroids that are 3-connected, from which we obtain the full characterization using the canonical tree decomposition of Cunningham and Edmonds~\cite{cunned}.

A uniform matroid is binary if and only if it is graphic. Thus, the binary uniform matroids are certainly orderable, as are those whose rank is at most two. Proposition~\ref{mickeymouse} implies this list is complete.

\begin{corollary}
\label{baseuniformprop}
A uniform matroid is orderable if and only if it is binary or has rank at most two.
\end{corollary}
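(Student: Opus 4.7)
The plan is to prove the two directions separately, with the easy direction handled by combining the observations already recorded in Section~\ref{prelims}, and the harder direction handled via the ``Mickey Mouse'' lemma (Proposition~\ref{mickeymouse}).

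For the forward direction, suppose $U_{r,n}$ is binary. As the paper just noted, binary uniform matroids coincide with graphic uniform matroids, so Proposition~\ref{graphicimpord} gives that $U_{r,n}$ is orderable. If instead $r(U_{r,n}) \le 2$, then Proposition~\ref{basics}(ii) immediately yields orderability.

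For the converse, I need to show that every uniform matroid that is neither binary nor of rank at most two fails to be orderable. A uniform matroid $U_{r,n}$ is non-binary precisely when $r \ge 2$ and $n \ge r+2$ (since $U_{2,4}$ is the canonical obstruction and any $U_{r,n}$ with $r\ge 2$ and $n\ge r+2$ contains $U_{2,4}$ as a minor). Combined with the assumption $r \ge 3$, I therefore have $r \ge 3$ and $n \ge r+2 \ge 5$. Here is where Proposition~\ref{mickeymouse} does all the work. I choose any $r$-element subset $X \subseteq E(U_{r,n})$, which satisfies $|X| = r \ge 3$. Because $n \ge r+2$, the complement $E(U_{r,n}) - X$ contains at least two distinct elements $e$ and $f$. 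Since every $(r+1)$-subset of $E(U_{r,n})$ is a circuit of $U_{r,n}$, both $X \cup e$ and $X \cup f$ are circuits. Noting that $U_{r,n}$ is simple (as $r \ge 2$), Proposition~\ref{mickeymouse} applies and gives that $U_{r,n}$ is not orderable.

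Putting the two directions together establishes the corollary. The only subtlety worth pointing out in the write-up is the equivalence ``binary uniform iff graphic uniform iff $r \le 1$ or $n \le r+1$,'' which the paper has already flagged in the sentence preceding the statement; no real obstacle arises, since Proposition~\ref{mickeymouse} is perfectly tailored to the structure of circuits in $U_{r,n}$.
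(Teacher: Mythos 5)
Your proposal is correct and follows essentially the same route as the paper: the easy direction via graphicness/rank at most two, and the converse via Proposition~\ref{mickeymouse} applied to two circuits $X\cup e$ and $X\cup f$ with $|X|=r\geq 3$. You have merely made explicit the details (the characterization of non-binary uniform matroids and the choice of $X$) that the paper leaves implicit when it says Proposition~\ref{mickeymouse} ``implies this list is complete.''
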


The next two results deduce the structure of a consistent ordering of a series extension of a non-binary uniform matroid, and show that such an ordering can be used to consistently order the underlying uniform matroid. For a non-coloop element $e$ of a matroid $M$, we denote the series class of $M$ containing $e$ by $S_e$ or sometimes by $S_e(M)$.

Let $M$ be a matroid with a consistent ordering. Suppose $X$ and $Y$ are disjoint subsets of a circuit $C$ of $M$. We say $X$ and $Y$ are \emph{adjacent} if there is an adjacent pair of elements $x$ and $y$, where $x$ belongs to $X$ and $y$ belongs to $Y$. Let $\mathcal{B}$ be the union of a set of blocks that belong to a common circuit of $M$. If there is a listing $B_1,B_2,\dots,B_k$ of the blocks in $\mathcal{B}$ such that $B_i$ and $B_{i+1}$ are adjacent for all $i$ in $[k-1]$, then $\mathcal{B}$ is a \emph{section}. Finally, let $S$ be a series class of $M$. If a block of $M$ is contained in $S$ and is maximal with this property, then it is called an \emph{$S$-block}.

\begin{lemma}
\label{uniformkeylemma}
Let $M$ be an orderable series extension of a non-binary uniform matroid $U_{r,n}$ and fix a consistent ordering of $M$. Let $C$ be a circuit of $M$, and let $x$ and $y$ be elements of $C$ from distinct series classes of $M$.
\begin{enumerate}
\item[(i)]{If a section $K$ in $C$ is adjacent to a pair of $S_x$-blocks, then $K$ must contain an $S_y$-block.}
\item[(ii)]{Every series class $S$ of $M$ has the same number of $S$-blocks.}
\end{enumerate}
\end{lemma}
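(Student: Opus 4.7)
First, I reduce (i) to showing that $K \cap S_y \neq \emptyset$. I claim that any $S_y$-block $B$ that meets $K$ must lie wholly in $K$: if $B$ had an element outside $K$, then, since $B$ is a contiguous arc in $C$'s cyclic ordering and the elements of $C - K$ adjacent to the boundary of $K$ are precisely the relevant endpoints of $B_1$ and $B_2$, the block $B$ would have to contain an element of $B_1 \cup B_2 \subseteq S_x$, contradicting $B \subseteq S_y$. So once $K$ contains any element of $S_y$, the $S_y$-block through it is contained in $K$, and (i) follows.

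For this reduction, I suppose for contradiction that $K \cap S_y = \emptyset$. Since $M$ is a series extension of $U_{r,n}$ with $n \geq r+2$, some series class $S_z$ of $M$ is disjoint from $C$, and $C' := (C - S_y) \cup S_z$ is again a circuit of $M$. Consistency forces adjacencies among the elements of $C \cap C' = C - S_y$ to agree in $C$ and $C'$, so $K$, $B_1$, and $B_2$ reappear as arcs of $C'$ with the same flanking structure, while $K \cap S_z = \emptyset$ because $S_z \cap C = \emptyset$. From here I intend to derive a contradiction by exploiting how the elements of $S_y$ (in $C$) and $S_z$ (in $C'$) must be interleaved with the common skeleton $C - S_y$: either an appropriate strong-circuit-elimination move or an iteration of this exchange through further series classes should force a configuration violating consistency, with Proposition~\ref{mickeymouse} as the likely ultimate obstruction once one reduces to a single-element difference between two circuits sharing at least three elements. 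The main obstacle is pinning this contradiction down cleanly, since the flexibility of how $S_y$ and $S_z$ can be distributed among the $b_y$ arcs of $C - S_y$ must be constrained by adjacencies forced by still further circuits of $M$.

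Part (ii) then follows quickly from (i). Given distinct series classes $S_x$ and $S_y$, pick any circuit $C$ of $M$ containing both, which exists because any $r+1 \geq 3$ series classes of $M$ form a circuit. The $b_x$ $S_x$-blocks partition the cyclic ordering of $C$ into $b_x$ arcs between consecutive $S_x$-blocks; each is a section flanked by two distinct $S_x$-blocks, so by (i) each contains an $S_y$-block, giving $b_y \geq b_x$. The symmetric argument, swapping the roles of $x$ and $y$, yields $b_x \geq b_y$ and simultaneously handles the degenerate case $b_x = 1$, so $b_x = b_y$ as required.
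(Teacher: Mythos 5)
Your part~(ii) is fine: given (i), counting one $S_y$-block in each of the $b_x$ sections between consecutive $S_x$-blocks and then symmetrizing is essentially the paper's argument, and your handling of the case $b_x=1$ via the reversed inequality works. The problem is part~(i), which is the heart of the lemma and which you explicitly leave unfinished (``the main obstacle is pinning this contradiction down cleanly''). That is a genuine gap, and the exchange you set up does not close it: replacing $S_y$ by a class $S_z$ disjoint from $C$ to form $C'=(C-S_y)\cup S_z$ keeps $S_x$ in the circuit, so in $C'$ the section $K$ is still comfortably flanked by the $S_x$-blocks $B_1$ and $B_2$ and nothing about $K$ is constrained any further; iterating this exchange or invoking Proposition~\ref{mickeymouse} (which requires two circuits differing in a single element, not in a whole series class) does not obviously produce the needed contradiction.

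The missing idea is to remove $S_x$ from the picture rather than $S_y$. Since $K$ avoids $S_y$ and $2\le r\le n-2$, there is a circuit $D_x$ containing $K\cup S_x$ but avoiding $S_y$; swapping classes gives the circuit $D_y=(D_x-S_x)\cup S_y$, which contains $K$ and $S_y$ but \emph{not} $S_x$. Consistency of $D_y$ with $D_x$ pins down both neighbours of each element of $K$ inside $D_x-S_x$ except for the two outward neighbours of the boundary elements of $K$, which in $D_x$ lay in $S_x$; in $D_y$ those two neighbours can therefore only come from $S_y$, so $K$ is adjacent to an $S_y$-block in $D_y$. But consistency of $D_y$ with $C$ says the boundary elements of $K$ are adjacent to no element of $S_y$ (in $C$ their neighbours are in $B_1\cup B_2\cup K$), a contradiction. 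Without some step of this kind --- a circuit containing $K$ and $S_y$ from which $S_x$ has been evicted --- your argument for (i) does not go through.
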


\begin{proof}
For (i), suppose to the contrary that there is a section $K$ in $C$ that contains no $S_y$-block and is adjacent to a pair of distinct $S_x$-blocks. As $M$ is non-binary, $2\leq r\leq n-2$ and there is a circuit $D_x$ of $M$ that contains $K$ and $S_x$ but avoids $S_y$. Let $D_y=(D_x- S_x)\cup S_y$. Observe that, since $M$ is a series extension of $U_{r,n}$, the set $D_y$ is a circuit. The consistency of $D_y$ with $C$ implies that $K$ is not adjacent to $S_y$-blocks in $D_y$, but the consistency of $D_y$ with $D_x$ gives that $K$ can only be adjacent to $S_y$-blocks in $D_y$, a contradiction.

We now deduce (ii) from (i). Let $S$ be a series class of $E(M)$ for which the number of $S$-blocks is as large as possible. We may assume this number exceeds one. In a circuit $C$ containing $S$, let $K$ be a minimal section that is adjacent to a pair of distinct $S$-blocks. Note that the number of such minimal sections in $C$ equals the number of $S$-blocks. Let $S'$ be a series class of $M$ contained in $C$ that is distinct from $S$. Part (i) implies there is an $S'$-block in $K$ and, as $K$ contains no $S$-blocks, (i) further implies that there is exactly one $S'$-block in $K$. Thus there are the same number of $S'$-blocks as $S$-blocks. Part (ii) now follows.
\end{proof}

\begin{proposition}
\label{downwarduniformprop}
Let $U_{r,n}$ be a non-binary uniform matroid. If a series extension of $U_{r,n}$ is orderable, then so is $U_{r,n}$.
\end{proposition}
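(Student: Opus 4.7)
The plan is to convert a fixed consistent ordering of $N$ into a consistent ordering of $U_{r,n}$. Every circuit $C$ of $U_{r,n}$ corresponds to a circuit $\tilde{C} = \bigcup_{e \in C} S_e$ of $N$, and the cyclic ordering of $\tilde{C}$ partitions it into a cyclic sequence of maximal $S$-blocks, each labelled by its series class. I will show that this colour sequence is periodic with period $r+1$, read a cyclic ordering of the $r+1$ series classes off one period, and take this as the cyclic ordering $\sigma_C$ of $C$.

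The key step is the periodicity. By Lemma~\ref{uniformkeylemma}(ii), every series class $S \subseteq \tilde{C}$ contributes the same number $k$ of $S$-blocks, so the block sequence in $\tilde{C}$ has total length $k(r+1)$. Let $K$ be a section between two consecutive $S$-blocks; it contains no $S$-block, and Lemma~\ref{uniformkeylemma}(i) produces an $S'$-block inside $K$ for every other series class $S' \subseteq \tilde{C}$. If $K$ contained two $S'$-blocks, then the minimal sub-section between them would, again by (i), force an $S$-block inside $K$, a contradiction. Hence $K$ consists of exactly one block of each of the $r$ other colours, so the $k$ blocks of each colour are evenly spaced at block-distance $r+1$, and the colour sequence of the block list is periodic with period $r+1$. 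Any period contains each colour exactly once, so it determines a well-defined cyclic ordering $\sigma_C$ of the $r+1$ series classes in $\tilde{C}$, which we identify with a cyclic ordering of $C$.

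Consistency of the family $\{\sigma_C\}$ then follows quickly. Suppose $e$ and $f$ are adjacent in $\sigma_C$. Then $S_e$ and $S_f$ are adjacent colours in the periodic block sequence for $\tilde{C}$, so some $x \in S_e$ and $y \in S_f$ occupy consecutive positions in the ordering of $\tilde{C}$. For any other circuit $C'$ of $U_{r,n}$ containing $\{e,f\}$, the elements $x$ and $y$ lie in $\tilde{C'}$, so the assumed consistency of the ordering of $N$ forces $x$ and $y$ to remain adjacent in $\tilde{C'}$. Hence $S_e$ and $S_f$ are adjacent colours in the block sequence of $\tilde{C'}$, and $e,f$ are adjacent in $\sigma_{C'}$, as required.

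The main technical obstacle is promoting the ``at least one'' in Lemma~\ref{uniformkeylemma}(i) to ``exactly one'' for the minimal section between two consecutive $S$-blocks, and then deducing the period-$(r+1)$ structure of the colour sequence; after this is in hand, the construction of $\sigma_C$ and the verification of consistency are routine. The common special case in which some series class in $\tilde{C}$ is a singleton (so $k=1$ and every series class in $\tilde{C}$ appears as a single block) is just the trivial case of the argument.
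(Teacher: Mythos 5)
Your proposal is correct and follows essentially the same route as the paper: both use Lemma~\ref{uniformkeylemma} to show that the maximal blocks of each circuit of the series extension cycle through the $r+1$ series classes with period $r+1$, and then read a cyclic ordering of the classes off one period and transfer consistency. The only minor difference is how periodicity is established: the paper anchors on a single series class and shows the colour permutation in successive gaps is identical, whereas you show that every class's blocks are evenly spaced at block-distance $r+1$, which does imply periodicity since each class then occupies a full residue class of block-positions modulo $r+1$.
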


\begin{proof}
Let $M$ be an orderable series extension of $U_{r,n}$ and fix a consistent ordering of $M$. By Lemma~\ref{uniformkeylemma}(ii), there is an integer $k\geq 1$ such that every series class of $M$ is divided into exactly $k$ blocks. If $k=1$, the result follows immediately, so assume $k\geq 2$.

Let $[n]$ be the ground set of $U_{r,n}$. Consider the circuit $C$ of $M$ that contains the set $\{1,2,\dots,r+1\}$. Label the $S_1$-blocks in $C$ as $B_1,B_2,\dots,B_k$, such that $B_i$ and $B_{i+1}$ abut a section $K_i$ that does not contain $S_1$-blocks, as in Figure~\ref{chunkdiagram}.

\begin{figure}
\centering
\begin{tikzpicture}[style=thick]

\draw (0,0) circle (2);

\draw (90:2.3) node {$B_1$};
\draw (105:1.9)--(105:2.1) (75:1.9)--(75:2.1);

\draw (45:2.3) node {$K_1$};

\draw (0:2.3) node {$B_2$};
\draw (15:1.9)--(15:2.1) (-15:1.9)--(-15:2.1);

\draw (-45:2.3) node {$K_2$};

\draw (-90:2.3) node {$B_3$};
\draw (-105:1.9)--(-105:2.1) (-75:1.9)--(-75:2.1);

\draw (225:2.3) node {$\ddots$};

\draw (180:2.3) node {$B_k$};
\draw (195:1.9)--(195:2.1) (165:1.9)--(165:2.1);

\draw (135:2.3) node {$K_k$};

\end{tikzpicture}
\caption{The circuit $C$ in the proof of Proposition~\ref{downwarduniformprop}.}
\label{chunkdiagram}
\end{figure}

Applying Lemma~\ref{uniformkeylemma}(i), we see that each section $K_i$ contains exactly one $S_j$-block for all $j$ in $\{2,3,\dots,r+1\}$. Thus, $B_i\cup K_i$ defines a permutation of $\{1,2,\dots,r+1\}$ that begins with 1. We show this permutation is the same for all $i$.

Without loss of generality, suppose the block in $K_1$ adjacent to $B_1$ is an $S_2$-block. If the block in $K_2$ adjacent to $B_2$ is an $S_j$-block with $j\neq 2$, then the $S_j$-blocks in $K_1$ and $K_2$ abut a section that contains no $S_2$-block, contradicting Lemma~\ref{uniformkeylemma}(i). Thus the block in $K_2$ adjacent to $B_2$ is an $S_2$-block. Repeating this argument gives that $B_1\cup K_1$ and $B_2\cup K_2$ define the same permutation on $\{1,2,\dots,r+1\}$. It follows that $B_i\cup K_i$ defines the same permutation on $\{1,2,\dots,r+1\}$ for all $i$ in $[n]$. Thus $B_i\cup K_i\cup B_{i+1}$ defines the same reversible cyclic ordering on $\{1,2,\dots,r+1\}$ for all $i$ in $[n]$; it is this reversible cyclic ordering that we extract from $C$ and use to order the circuit $\{1,2,\dots,r+1\}$ in $U_{r,n}$.

In this way, every circuit of $U_{r,n}$ is ordered using the corresponding circuit of $M$. Since the ordering of $M$ is consistent, so too is the ordering it gives to $U_{r,n}$.
\end{proof}

\begin{theorem}
\label{exclunifserminor}
Let $U_{r,n}$ be a non-binary uniform matroid of rank at least three. If $M$ is a matroid with a series minor isomorphic to $U_{r,n}$, then $M$ is not orderable.
\end{theorem}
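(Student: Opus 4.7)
The plan is to reduce to Proposition~\ref{downwarduniformprop} by extracting, from any series-minor sequence taking $M$ to $U_{r,n}$, a deletion set $X \subseteq E(M)$ for which $M \setminus X$ is a series extension of $U_{r,n}$.

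First, I would argue that the operations in a series-minor sequence can be rearranged so that all deletions precede all series-contractions. The identity $M' / e \setminus f = M' \setminus f / e$ for disjoint $e, f$ lets us swap a series-contraction with a later deletion whenever the deletion target is not the cocircuit partner of the contracted element: if $e$ lies in a two-element cocircuit $\{e, f\}$ of the current matroid $M'$ and the next step deletes some $f' \neq f$, then $\{e, f\}$ remains a two-element cocircuit of $M' \setminus f'$, so the series-contraction of $e$ remains valid after the swap. In the exceptional case where the deletion target is exactly $f$, the composite $M' / e \setminus f$ equals $M' \setminus \{e, f\}$, so one may simply replace the contraction-deletion pair by two deletions, eliminating one series-contraction without changing the endpoint. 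Iterating these moves produces a reorganized sequence in which $M$ is first deleted to some $M \setminus X$ and then contracted down to $U_{r,n}$ by valid series-contractions. Undoing each contraction exhibits $M \setminus X$ as a series extension of $U_{r,n}$.

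With this in hand, the main argument is immediate. Assume toward a contradiction that $M$ is orderable. Then $M \setminus X$ is orderable by Proposition~\ref{basics}(i), and since $M \setminus X$ is a series extension of the non-binary uniform matroid $U_{r,n}$, Proposition~\ref{downwarduniformprop} forces $U_{r,n}$ itself to be orderable. But $U_{r,n}$ has rank at least three and is non-binary, so it is neither binary nor of rank at most two, and by Corollary~\ref{baseuniformprop} $U_{r,n}$ is not orderable, a contradiction.

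I expect the only step requiring real attention to be the rearrangement argument; once the deletion-first form of a series-minor sequence is in place, the proof collapses to a short chain through the two preceding results.
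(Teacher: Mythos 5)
Your overall route is the same as the paper's: both proofs reduce the theorem to producing a set $X$ for which $M\del X$ is a series extension of $U_{r,n}$, and then run the identical closing chain through Proposition~\ref{basics}(i), Proposition~\ref{downwarduniformprop}, and Corollary~\ref{baseuniformprop}; that closing chain is correct as you have written it. The difference is that the paper obtains the deletions-first normal form by citing \cite[Proposition 5.4.2]{oxley}, whereas you re-derive it by hand, and it is in that re-derivation that there is an error.

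The claim that if $\{e,f\}$ is a two-element cocircuit of $M'$ and $f'\neq f$, then $\{e,f\}$ remains a two-element cocircuit of $M'\del f'$, is false. The cocircuits of $M'\del f'$ are the \emph{minimal} nonempty sets of the form $C^*-f'$ with $C^*$ a cocircuit of $M'$; if $f'$ lies in the same series class as $e$, so that $\{e,f'\}$ is also a cocircuit of $M'$, then $\{e\}=\{e,f'\}-f'$ is a cocircuit of $M'\del f'$, i.e.\ $e$ becomes a coloop there, and $\{e,f\}$ is no longer a cocircuit. A minimal instance is $U_{2,3}$ on $\{e,f,f'\}$: deleting $f'$ turns $e$ and $f$ into coloops. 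This case genuinely occurs in a series-minor sequence whenever an element is series-contracted while another member of its series class is deleted later, so it cannot be folded into the generic case. The gap is repairable in the spirit of your own exceptional case: when the swap would turn the contraction of $e$ into a coloop contraction, replace that contraction by the deletion of $e$ (the two operations coincide on coloops) and absorb $e$ into $X$. After this patch, every surviving contraction is of an element lying in a two-element cocircuit of the current matroid, and since the cocircuits of a contraction are precisely the cocircuits of the original matroid avoiding the contracted elements, undoing the contractions one at a time does exhibit $M\del X$ as a series extension of $U_{r,n}$. Alternatively, you could simply invoke \cite[Proposition 5.4.2]{oxley}, which is exactly this normal form and is what the paper does.
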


\begin{proof}
By \cite[Proposition 5.4.2]{oxley}, we may write $U_{r,n}=M\del X/Y$ where each element of $Y$ is in series with an element of $M\del X$ not in $Y$. By Corollary~\ref{baseuniformprop}, the matroid $U_{r,n}$ is not orderable. Therefore, by Proposition~\ref{downwarduniformprop}, neither is its series extension $M\del X$. Thus, $M$ is not orderable.
\end{proof}

Recall that, in a balanced series extension $N$ of a matroid $M$ without coloops, each element of $M$ is replaced by $k$ elements in series for some positive integer $k$.

\begin{lemma}
\label{balancebringsorder}
Let $N$ be a balanced series extension of $U_{2,n}$ for some $n\geq 4$. Then $N$ is orderable.
\end{lemma}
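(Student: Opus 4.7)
My plan is to specify, for each unordered pair $\{i, j\}$ of series-class indices, a matching between the corresponding classes, and then to show that the union of the three matchings on any circuit of $N$ is a single Hamilton cycle of length $3k$; that cycle will serve as the cyclic ordering.

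Label each series class as $S_i = \{e_{i, 0}, e_{i, 1}, \ldots, e_{i, k-1}\}$ for $i \in [n]$, with indices viewed modulo $k$. Since the circuits of $U_{2, n}$ are the 3-subsets of $[n]$, the circuits of $N$ are precisely the sets $S_i \cup S_j \cup S_\ell$ for 3-subsets $\{i, j, \ell\} \subseteq [n]$. For each pair of indices $i < j$, I would define the matching $M_{ij} = \{\{e_{i, a}, e_{j, a+1}\} : 0 \leq a < k\}$ between $S_i$ and $S_j$.

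For a 3-subset $T = \{i, j, \ell\}$ with $i < j < \ell$, the graph on $S_i \cup S_j \cup S_\ell$ with edge set $M_{ij} \cup M_{j\ell} \cup M_{i\ell}$ is $2$-regular with $3k$ edges. I claim it is a single Hamilton cycle; this is verified by a direct computation: starting at $e_{i, 0}$ and alternately traversing edges from $M_{ij}$, then $M_{j\ell}$, then $M_{i\ell}$, one walks $e_{i, 0} \to e_{j, 1} \to e_{\ell, 2} \to e_{i, 1} \to e_{j, 2} \to e_{\ell, 3} \to \cdots$, advancing the $S_i$-index by $1$ after each lap of three edges and so returning to $e_{i, 0}$ only after exactly $3k$ steps. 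I take this Hamilton cycle to be the reversible cyclic ordering of the circuit $S_i \cup S_j \cup S_\ell$.

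Consistency is then immediate: for any pair $\{S_p, S_q\}$ and any circuit containing both, the adjacencies between $S_p$ and $S_q$ in that circuit are exactly the edges of $M_{pq}$, which do not depend on the third series class present. The main subtlety is precisely showing that the three matchings combine into a single $3k$-cycle rather than, say, $k$ disjoint triangles, as would happen with the symmetric choice $\{e_{i, a}, e_{j, a}\}$ for every pair. It is the asymmetric ``$+1$'' shift in the definition of $M_{ij}$ that closes the large cycle and forces the construction to work.
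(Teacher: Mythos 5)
Your proof is correct and takes essentially the same approach as the paper: both construct the ordering explicitly by fixing, for each pair of series classes, a matching (with a cyclic ``$+1$'' shift) whose union over the three pairs in any circuit is a single $3k$-cycle, so that consistency holds because adjacency depends only on the pair of classes. The only difference is cosmetic—the paper uses the identity matching between $S_1$ and every other class and the shift elsewhere, whereas your uniform shift makes the consistency check slightly cleaner.
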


\begin{proof}
Let $[n]$ be the ground set of $U_{2,n}$. For each $x$ in $E(U_{2,n})$, let $\{x_0, x_1, \dots,x_{k-1}\}$ be the series class $S_x$ of $N$ that corresponds to $x$. Subscript arithmetic will be done modulo $k$. Let $\mathcal{C}_1$ be the set of circuits of $N$ containing $S_1$. For the circuit $C=S_1\cup S_x\cup S_y$ in $\mathcal{C}_1$ with $x< y$, give $C$ the ordering $$(y_0\ 1_0\ x_0\ y_1\ 1_1\ x_1\ \dots\ y_{k-1}\ 1_{k-1}\ x_{k-1}).$$ To see that the circuits in $\mathcal{C}_1$ are consistent, suppose $e$ and $f$ belong to common circuits in $\mathcal{C}_1$. Then at least one of $e$ and $f$, say $e$, is in $S_1$. If $f$ is in $S_1$, then $e$ and $f$ are never adjacent. Otherwise, $f$ is in $S_z$ for some $z>1$, so $e=1_s$ and $f=z_t$ for some $s$ and $t$. If $s=t$, then $e$ and $f$ are always adjacent; if $s\neq t$, then $e$ and $f$ are never adjacent.

Now let $\mathcal{C}_2$ be the set of circuits of $N$ not containing $S_1$. For the circuit $D=S_x\cup S_y\cup S_z$ in $\mathcal{C}_2$ with $1< x<y<z$, give $D$ the ordering $$(z_1\ x_0\ y_1\ z_2\ x_1\ y_2\ ...\ z_0\ x_{k-1}\ y_0).$$ Note that $x_i$ is always adjacent to $y_{i+1}$ and $z_{i+1}$. Further, the blocks $z_{i}x_{i-1}y_{i}$ are ordered so that $y_i$ is always adjacent to $z_{i+1}$. Thus, the circuits in $\mathcal{C}_2$ are consistent with those in $\mathcal{C}_1$.

Finally, the circuits in $\mathcal{C}_2$ are consistent. Suppose instead that there are circuits $C$ and $D$ in $\mathcal{C}_2$ and elements $x_s$ and $y_t$ in $C\cap D$ so that $x_s$ and $y_t$ are adjacent in $C$ but not in $D$. Assume $x<y$. From $C$, we have that $t=s+1$, but, from $D$, we have that $t\neq s+1$, a contradiction.
\end{proof}

The following proposition specializes some of the results about uniform matroids to $U_{2,n}$ with $n\geq 4$. These rank-two uniform matroids will serve as the foundation from which all non-binary orderable matroids are built.

\begin{proposition}
\label{ordimpbalanced}
Let $M$ be an orderable series extension of $U_{2,n}$ for some $n\geq 4$, and fix a consistent ordering of $M$. Then
\begin{enumerate}
\item[(i)]{for all series classes $S$ of $M$, every $S$-block of the ordering consists of a single element; and}
\item[(ii)]{$M$ is a balanced series extension of $U_{2,n}$.}
\end{enumerate}
\end{proposition}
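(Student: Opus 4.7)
The plan is to combine Lemma~\ref{uniformkeylemma} with a cross-circuit consistency argument that exploits the existence of a fourth series class. First I would fix a consistent ordering of $M$, and by Lemma~\ref{uniformkeylemma}(ii) let $k$ be the common number of blocks per series class. Part (ii) will drop out of part (i), since each of the $k$ blocks per series class will then have size one.

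For part (i), I would begin with the reference circuit $C = S_1 \cup S_2 \cup S_3$. By the argument in the proof of Proposition~\ref{downwarduniformprop} applied with $r=2$, Lemma~\ref{uniformkeylemma}(i) forces $C$ to have the cyclic ordering $B_1^1 B_1^2 B_1^3 B_2^1 B_2^2 B_2^3 \cdots B_k^1 B_k^2 B_k^3$, where $B_i^j$ denotes the $i$-th $S_j$-block. Using $n \geq 4$, I would bring in $S_4$ and consider the circuits $D = S_1 \cup S_2 \cup S_4$ and $E = S_1 \cup S_3 \cup S_4$. Consistency with $C$, combined with the same Lemma~\ref{uniformkeylemma}(i) analysis applied inside $D$ and $E$, forces these two circuits to have the rigid cyclic orderings $B_1^1 B_1^2 D_1^4 B_2^1 B_2^2 D_2^4 \cdots$ and $B_1^1 E_1^4 B_1^3 B_2^1 E_2^4 B_2^3 \cdots$, where the $D_i^4$ and $E_i^4$ are the $S_4$-blocks of $D$ and $E$ respectively, and where the $S_1$- and $S_3$-block labels are preserved from $C$ by consistency.

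Now let $e_a$ be the endpoint of $B_i^1$ that in $C$ is adjacent to the last element of $B_{i-1}^3$ (indices cyclic). In $C$ the outside neighbor of $e_a$ lies in $S_3$, while in $D$ it lies in $S_4$ (namely the last element of $D_{i-1}^4$). Both of these adjacencies survive into $E$ by consistency, since all four elements involved lie in $E$. If $|B_i^1| \geq 2$, then the $C$-adjacency of $e_a$ with its internal neighbor in $B_i^1$ also survives into $E$ by consistency, giving $e_a$ three distinct neighbors in the cyclic ordering of $E$ — impossible. Hence $|B_i^1| = 1$, and the symmetric argument (with the roles of the series classes permuted and the auxiliary circuits chosen analogously) applied to every block of every circuit of $M$ establishes (i). Part (ii) is then immediate, because each series class consists of exactly $k$ blocks each of size one.

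The main obstacle is carefully pinning down the rigid structural forms of $C$, $D$, and $E$ from consistency; once these are in place, the three-neighbor contradiction is a short count, and the symmetry argument covering all blocks and circuits is routine bookkeeping.
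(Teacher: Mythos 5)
Your proposal is correct and follows essentially the same route as the paper's proof: it isolates an $S_1$-block of size at least two, uses consistency to track which endpoint of that block is adjacent to which series class across the three circuits $S_1\cup S_2\cup S_3$, $S_1\cup S_2\cup S_4$, and $S_1\cup S_3\cup S_4$, and arrives at the same three-neighbour contradiction, with (ii) then following from Lemma~\ref{uniformkeylemma}(ii) exactly as in the paper. The explicit ``rigid pattern'' bookkeeping you import from the proof of Proposition~\ref{downwarduniformprop} is more machinery than the paper uses, but it is harmless.
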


\begin{proof}
Statement (ii) follows from combining (i) with Lemma~\ref{uniformkeylemma}(ii), so it suffices to show (i). Let $E(U_{2,n})=[n]$. Suppose, to the contrary, that $M$ has an $S_1$-block $B$ of size at least two.

Applying Lemma~\ref{uniformkeylemma}(i), we have that $B$ is adjacent to both an $S_2$-block and an $S_3$-block in the circuit of $M$ containing $\{1,2,3\}$. Let $1_2$ be the element of $B$ adjacent to the $S_2$-block and let $1_3$ be the element of $B$ adjacent to the $S_3$-block, where $1_2$ and $1_3$ are necessarily distinct. In the circuit of $M$ containing $\{1,2,4\}$, Lemma~\ref{uniformkeylemma}(i) now gives that $B$ is adjacent to both an $S_2$-block and an $S_4$-block. Consistency dictates that $1_2$ is again adjacent to the $S_2$-block. Therefore $1_3$ is now adjacent to the $S_4$-block.

Now consider the circuit of $M$ containing $\{1,3,4\}$. Consistency with the two aforementioned circuits requires that $1_3$ be adjacent to both an $S_3$-block and an $S_4$-block. As $\vert B\vert \geq 2$, this is a contradiction.
\end{proof}

The next theorem identifies all orderable matroids that are 3-connected and non-binary.

\begin{theorem}
\label{nonbin3conn}
If $M$ is a $3$-connected non-binary orderable matroid, then $M\cong U_{2,n}$ for some $n\geq 4$.
\end{theorem}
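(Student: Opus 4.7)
The strategy is to argue by contradiction that $r(M) \leq 2$; since $M$ is $3$-connected, simple, and non-binary (which requires $|E(M)| \geq 4$), the conclusion $M \cong U_{2,n}$ with $n \geq 4$ then follows. Assume $r(M) \geq 3$. Because $M$ is non-binary, $M$ has a $U_{2,4}$-minor of the form $(M \del Y)/X$ whose ground set contains the four elements of a line $L = \{a,b,c,d\}$. Set $N := M \del Y$; by Proposition~\ref{basics}(i) $N$ is orderable, and as a deletion of the simple matroid $M$, $N$ is simple.

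For each $3$-subset $\{x,y,z\}$ of $L$, the circuit $\{x,y,z\}$ of $N/X$ lifts to a unique minimal circuit $\{x,y,z\} \cup S_{xyz}$ of $N$ with $S_{xyz} \subseteq X$; uniqueness follows from strong circuit elimination, since a second minimal lift would generate a circuit of $N$ projecting to a non-circuit of the $U_{2,4}$-restriction in $N/X$. The crucial observation is that no two of the four values $S_{abc}, S_{abd}, S_{acd}, S_{bcd}$ can coincide on $3$-subsets sharing two elements of $L$. If $S_{xyz} = S_{xyw} = S$ is nonempty, then the two lifts $\{x,y,z\} \cup S$ and $\{x,y,w\} \cup S$ differ in a single element and share the $(|S|+2)$-element set $\{x,y\} \cup S$, so Proposition~\ref{mickeymouse} contradicts orderability of $N$. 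If $S_{xyz} = S_{xyw} = \emptyset$, iterated strong elimination forces all four $3$-subsets of $L$ to be circuits of $N$, making $L$ itself a four-point line of $N$. Thus, when $|X| \geq 1$, either an immediate contradiction arises (the small-$|X|$ cases are handled by noting that the $2^{|X|}$ available $S$-values are too few for four distinct assignments, or that the forced $\emptyset$-value together with another lift yields a circuit of $N$ projecting to an independent $2$-subset of $L$) or we reduce to the case in which $L$ is a four-point line of $N$, equivalently of $M$.

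With $L$ a four-point line of $M$ and $r(M) \geq 3$, choose any $e \in E(M) - L$; by the $3$-connectivity of $M$ there exists a circuit of $M$ containing $e$ and meeting $L$, and we let $C$ be one of smallest size. We cannot have $|C \cap L| \geq 3$, for then $C$ would contain a triangle of $L$ and hence equal that triangle, forcing $C \subseteq L$. We cannot have $|C| = 3$ with $|C \cap L| = 2$, for the closure of any two-element subset of $L$ is all of $L$, so $e$ would lie in $L$. Iteratively applying strong circuit elimination between $C$ and the triangles of $L$, and ruling out $2$-element circuits by simplicity, one eventually obtains, for some set $T \subseteq E(M) - L$ with $|T| \geq 3$ and distinct $l_1, l_2 \in L$, circuits of the form $T \cup \{l_1\}$ and $T \cup \{l_2\}$. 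Proposition~\ref{mickeymouse} applied with this $T$ then contradicts orderability of $M$. The main obstacle is the $|X| \geq 3$ subcase of the lifted argument: here four distinct nonempty $S$-values can be set-theoretically compatible with the elimination-forced relations $S_i \subseteq S_j \cup S_k$, and the contradiction must instead come from the cyclic-ordering constraints---tracking how consistency of the four lift-circuits on their pairwise intersections forces some element in the largest lift-circuit to have more than two adjacent elements, which no cyclic ordering of a finite set can accommodate.
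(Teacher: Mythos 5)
Your approach is genuinely different from the paper's, but it contains a gap that I do not believe can be closed along the lines you sketch. The critical case is the one you flag yourself: the four lift-circuits $\{x,y,z\}\cup S_{xyz}$ having pairwise distinct, nonempty sets $S_{xyz}$. You propose to derive a contradiction there purely from the consistency constraints among these four circuits, but that configuration is realized inside an orderable matroid. Let $N$ be the balanced series extension of $U_{2,4}$ of order $2$, with series classes $S_i=\{i_0,i_1\}$ for $i\in[4]$. Then $N$ is simple and orderable (Lemma~\ref{balancebringsorder}), $N/X$ is a $U_{2,4}$ on $L=\{1_0,2_0,3_0,4_0\}$ for the independent set $X=\{1_1,2_1,3_1,4_1\}$, and the four lifts are exactly the four circuits $S_x\cup S_y\cup S_z$ of $N$, so that $S_{xyz}=\{x_1,y_1,z_1\}$: all distinct, all nonempty, and satisfying every elimination relation $S_{bcd}\subseteq S_{abc}\cup S_{abd}$. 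These four circuits admit a consistent ordering in which every element has exactly two neighbours in each circuit containing it, so no contradiction can be extracted from them alone, and your claimed dichotomy (immediate contradiction, or $L$ is a four-point line of $N$) is false for a general simple orderable $N=M\del Y$. Any correct argument must use the $3$-connectivity of $M$, or at least far more of its circuit structure, and your local analysis of the four lifts never does. This is precisely why the paper's proof is long: it shows that an orderable matroid has no $U_{3,5}$-minor (Proposition~\ref{noU35minor}) and no $\mathcal{W}^3$-minor (Proposition~\ref{no3whirlminor}, a case analysis over restrictions of $PG(2,3)$), and then invokes Oxley's theorem (Proposition~\ref{nonbinmanyminor}) that a $3$-connected non-binary matroid with rank and corank at least three has a minor in $\{\mathcal{W}^3,P_6,Q_6,U_{3,6}\}$, which forces $r^\ast(M)=2$ and hence a forbidden $U_{3,5}$-minor unless $r(M)=2$.

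For completeness: the parts of your argument that do work are the well-definedness of $S_{xyz}$ (the set $\{x,y,z\}\cup X$ has nullity one, so contains a unique circuit), the coincidence cases via Proposition~\ref{mickeymouse}, and the pigeonhole for $|X|\leq 1$ (with the elimination relation above, $|X|=2$ can also be settled). The second half also overreaches: when $L$ is a four-point line, iterated elimination need not yield circuits $T\cup\{l_1\}$ and $T\cup\{l_2\}$ with $T$ disjoint from $L$ and $|T|\geq 3$. For instance, in the rank-$3$ matroid consisting of $L$ and two further points $e,f$ in general position, the circuits meeting $L$ and containing $e$ are the sets $\{e,f,l_i,l_j\}$; a Proposition~\ref{mickeymouse} configuration does exist there, but its common set is $\{e,f,l_i\}$, which meets $L$. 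So that step, while more plausibly repairable, is not correct as stated either.
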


The next two results will be used in the proof of this theorem.

\begin{proposition}
\label{noU35minor}
If $M$ is an orderable matroid, then $M$ has no minor isomorphic to $U_{3,5}$.
\end{proposition}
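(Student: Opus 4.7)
The plan is to derive Proposition~\ref{noU35minor} as a quick consequence of the machinery already built up for uniform matroids in this section, in particular Theorem~\ref{exclunifserminor}. First I would verify that $U_{3,5}$ itself satisfies the hypotheses of that theorem: it has rank $3$, and it is non-binary because $U_{r,n}$ is binary only when $r\in\{0,1,n-1,n\}$, which fails here; equivalently, contracting any single element of $U_{3,5}$ produces $U_{2,4}$. By Corollary~\ref{baseuniformprop}, $U_{3,5}$ is therefore not orderable.

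Given this, the real content is simply to pass from an arbitrary minor to the series-minor setting in which Theorem~\ref{exclunifserminor} (via Proposition~\ref{downwarduniformprop}) is formulated. Suppose for contradiction that $M$ is orderable and has $U_{3,5}$ as a minor. Using the same standard decomposition invoked in the proof of Theorem~\ref{exclunifserminor} (namely \cite[Proposition 5.4.2]{oxley}), I would write $U_{3,5}=M\del X/Y$ in such a way that every element of $Y$ lies in a non-trivial series class of $M\del X$. Then $M\del X$ is a series extension of $U_{3,5}$, so Proposition~\ref{downwarduniformprop} shows that $M\del X$ is not orderable (since $U_{3,5}$ is not). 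Finally, Proposition~\ref{basics}(i) propagates non-orderability from $M\del X$ up to $M$, giving the desired contradiction.

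I do not anticipate a genuine obstacle: the proposition is essentially the specialization of Theorem~\ref{exclunifserminor} to the rank-$3$, $5$-element uniform matroid that will be needed in the next few results, and the only non-trivial step, namely the adjustment of $X$ and $Y$ so that $Y$ lies in series, is already packaged by the cited Oxley proposition. The proof should be only a few lines long.
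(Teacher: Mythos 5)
There is a genuine gap at the step where you ``write $U_{3,5}=M\del X/Y$ in such a way that every element of $Y$ lies in a non-trivial series class of $M\del X$.'' The cited result \cite[Proposition 5.4.2]{oxley} provides such a decomposition precisely when $U_{3,5}$ is a \emph{series minor} of $M$ --- which is the hypothesis of Theorem~\ref{exclunifserminor} --- not when it is merely a minor, which is all that Proposition~\ref{noU35minor} assumes. These are genuinely different hypotheses: for example, $U_{3,5}$ is a minor of $U_{4,7}$ (contract one element, delete one element), but $U_{4,7}$ is cosimple, so its only series minors are its restrictions $U_{4,k}$, and $U_{3,5}$ is not among them. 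So there is in general no choice of $X$ and $Y$ with $Y$ lying in series in $M\del X$, and your reduction to Proposition~\ref{downwarduniformprop} does not go through. (The distinction between minors and series minors is not cosmetic in this paper: Example~\ref{nonordO1} exists precisely because orderability is not closed under series minors.)

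The paper closes this gap by dualizing and exploiting the fact that $U_{3,5}$ has \emph{corank} two. Writing $U_{3,5}=M\del X/Y$ with $X$ coindependent and $Y$ independent, one gets $M^\ast/X\del Y\cong U_{2,5}$ with $r(M^\ast/X)=2$; after discarding the loops $Z$ of $M^\ast/X$, the whole of $M^\ast/(X\cup Z)$ is a parallel extension of $U_{2,n}$ for some $n\geq 5$ (not just the part sitting over the original five elements). Dualizing back, $M\del(X\cup Z)$ is a series extension of $U_{n-2,n}$, a non-binary uniform matroid of rank at least three, and \emph{that} is the series minor to which Theorem~\ref{exclunifserminor} is applied. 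In other words, one does not realize $U_{3,5}$ itself as a series minor of $M$; one finds a possibly larger uniform matroid $U_{n-2,n}$ that is. Your proof would be repaired by replacing the appeal to \cite[Proposition 5.4.2]{oxley} with this dualization argument; the rest of your outline (non-orderability of $U_{n-2,n}$ via Corollary~\ref{baseuniformprop}, propagation through Proposition~\ref{downwarduniformprop} and Proposition~\ref{basics}(i)) is then fine.
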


\begin{proof}
Assume instead that $M\del X/Y\cong U_{3,5}$, with $X$ coindependent and $Y$ independent. Then $M^\ast/ X\del Y\cong U_{2,5}$ where $M^\ast /X$ has rank two. Thus, after deleting a set $Z$ of loops from $M^\ast/X$, we obtain a parallel extension of $U_{2,n}$ for some $n\geq 5$. This makes $M\del (X\cup Z)$ an orderable series extension of $U_{n-2,n}$, contradicting Theorem~\ref{exclunifserminor}.
\end{proof}

\begin{proposition}
\label{no3whirlminor}
If $M$ is an orderable matroid, then $M$ has no minor isomorphic to $\mathcal{W}^3$.
\end{proposition}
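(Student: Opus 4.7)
The approach parallels that of Proposition~\ref{noU35minor}. Assume for contradiction that $M$ is orderable with $\mathcal{W}^3$ as a minor. By \cite[Proposition 5.4.2]{oxley} we may write $\mathcal{W}^3 \cong M\del X / Y$ with each element of $Y$ in series in $M\del X$ with an element of $E(M\del X) - Y$. Since $M\del X$ is orderable by Proposition~\ref{basics}(i), we may replace $M$ by $M\del X$ and so assume $X=\emptyset$; then $M$ is a series extension of $\mathcal{W}^3$, whose series classes we denote $S_x$ for $x \in E(\mathcal{W}^3)$. If $Y = \emptyset$, then $M \cong \mathcal{W}^3$, contradicting Corollary~\ref{thelonelywhirl}, so we may suppose $Y \neq \emptyset$.

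Label $\mathcal{W}^3$ by rim $r_1,r_2,r_3$ and spokes $s_1,s_2,s_3$; then $\mathcal{W}^3$ has the three triangles $T_i = \{r_i, s_i, s_{i+1}\}$ (indices mod $3$), the three 4-circuits $A_i = \{r_1,r_2,r_3,s_i\}$ sharing the rim, and three further 4-circuits $B_i$ obtained by strong circuit elimination among pairs of $A_j$. The plan is to adapt the proof of Lemma~\ref{uniformkeylemma} to this setting: the expansions $A_1', A_2', A_3'$ in $M$ share $R := S_{r_1} \cup S_{r_2} \cup S_{r_3}$ and differ only in a spoke series class, which is precisely the swap pattern driving the proof of Lemma~\ref{uniformkeylemma}(i). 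Together with strong circuit elimination among the $T_i'$, $A_i'$, and $B_i'$, this swap structure should force each series class to contribute the same number of blocks to each circuit containing it, in analogy with Lemma~\ref{uniformkeylemma}(ii). Following the argument of Proposition~\ref{downwarduniformprop}, the common ordering of $R$ can then be used to consistently order $\mathcal{W}^3$ itself, contradicting Corollary~\ref{thelonelywhirl}.

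The chief obstacle is carrying out the Lemma~\ref{uniformkeylemma}(i)-analog under the non-uniform structure of $\mathcal{W}^3$. In $U_{r,n}$ every $(r{+}1)$-subset is a circuit, which makes every swap \emph{free}; for $\mathcal{W}^3$, direct spoke-to-spoke swaps live only inside the $A$-family, and swaps involving rim elements must be deduced from the triangles $T_k'$ together with the auxiliary 4-circuits $B_j'$. The resulting case analysis on which series classes abut the section $K$ under consideration is what prevents the proof from being the one-line reduction available in Proposition~\ref{noU35minor}.
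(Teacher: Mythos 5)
The proof fails at its first step. Oxley's Proposition~5.4.2 applies when $\mathcal{W}^3$ is a \emph{series minor} of $M$ --- that is exactly how the paper invokes it in Theorem~\ref{exclunifserminor}, whose hypothesis is a series minor --- but it does not let you convert an arbitrary minor $M\del X/Y\cong\mathcal{W}^3$ into one in which every element of $Y$ is in series. The reason the analogous reduction succeeds in Proposition~\ref{noU35minor} is special to corank two: there $M^\ast/X$ has rank two, so after removing loops it is \emph{forced} to be a parallel extension of a uniform matroid, and dualizing yields a genuine series extension. Since $\mathcal{W}^3$ is self-dual of rank three, here $M^\ast/X$ with its loops removed is merely some loopless rank-three matroid $N$ having $\mathcal{W}^3$ as a deletion, and the deleted elements need not be parallel to anything: $\si(N)$ can be any $3$-connected rank-three extension of $\mathcal{W}^3$, such as $F_7^-$, $P_7$, $AG(2,3)$, or $PG(2,3)$ itself. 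Your reduction silently discards all of these possibilities, and they are where essentially all of the work lies. The paper's proof first shows that $\si(N)$ must be ternary, writes it as $PG(2,3)$ minus a restriction of $O_7$, and then eliminates each of the fifteen resulting cases individually.

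Even in the one case your reduction does reach --- where some deletion of $M$ really is a series extension of $\mathcal{W}^3$ --- what you offer is a plan rather than a proof: you concede that the analogue of Lemma~\ref{uniformkeylemma}(i) requires a case analysis you have not carried out, and Lemma~\ref{uniformkeylemma} as stated is proved only for series extensions of non-binary \emph{uniform} matroids, where the freedom to swap any series class for any other is essential. For what it is worth, the paper disposes of this case (it is the $K=O_7$ case) by a much shorter counting argument: applying Proposition~\ref{ordimpbalanced}(ii) to two rank-two minors yields linear relations among the sizes of the series classes that force one of them to be empty. So the correct strategy for the piece you did isolate is arithmetic on series-class sizes, not an adaptation of the block-swapping argument.
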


The proof of this proposition will rely on the next lemma and its corollary. This second pair of results will use the following modification of the $(e,f,g)$-property. A matroid $M$ has the \emph{series} $(e,f,g)$-\emph{property} if
\begin{itemize}
\item[(i)]{$M$ has a circuit containing $\{e,f,g\}$;}
\item[(ii)]{$S_f(M)$ is distinct from both $S_e(M)$ and $S_g(M)$; and}
\item[(iii)]{$M$ has a circuit $D$ containing $f$ but not $\{e,g\}$ and, for each $d$ in $D$, there is a circuit of $M$ containing $\{e,f,g,d\}$.}
\end{itemize}
Note that $e$ and $g$ may be equal in this definition.

\begin{lemma}
\label{seriesefglemma}
Suppose that $M$ has the series $(e,f,g)$-property and that $N$ is a series extension of $M$. Then, in a consistent ordering of $N$, if $S_e(N)\neq S_g(N)$, then no $S_f(N)$-block is adjacent to both an $S_e(N)$-block and an $S_g(N)$-block; and, if $S_e(N)=S_g(N)$, then no $S_f(N)$-block is adjacent to two $S_e(N)$-blocks.
\end{lemma}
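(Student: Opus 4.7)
The plan is to mimic the proof of Lemma~\ref{efglemma}, with the single element $f$ there replaced by an endpoint of an $S_f(N)$-block and adjacencies in $M$ replaced by adjacencies in $N$. The strategy is to identify a single element $x$ in a hypothetical bad block $F$ and use consistency across three well-chosen circuits of $N$ to force $x$ to have three pairwise distinct neighbors in one of them, which is impossible since every element has exactly two neighbors in a reversible cyclic ordering.

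Concretely, I would suppose for a contradiction that an $S_f(N)$-block $F$ in some circuit $C^N$ of $N$ is adjacent both to a block $B_1\subseteq S_e(N)$ and to a block $B_2\subseteq S_g(N)$, with $B_1\neq B_2$ when $S_e(N)=S_g(N)$. Let $x,y$ be the endpoints of $F$ witnessing these adjacencies (so $x=y$ precisely when $|F|=1$), and let $b_1\in B_1$, $b_2\in B_2$ be their respective neighbors in $C^N$; the disjointness of $B_1$ and $B_2$ forces $b_1\neq b_2$. Next, let $D^N$ denote the circuit of $N$ corresponding to the circuit $D$ from condition (iii). Since $e,g\notin D$ and a series extension cannot merge series classes, $D^N$ is disjoint from $S_e(N)\cup S_g(N)$, and consistency implies that $F$ remains a maximal $S_f(N)$-block of $D^N$. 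Hence the neighbor $p$ of $x$ in $D^N$ lying outside $F$ satisfies $p\in S_{d_1}(N)$ for some $d_1\in D\setminus\{f\}$.

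Finally, I would apply condition (iii) to this particular $d_1$ to obtain a circuit $C^N_1$ of $N$ containing each of $S_e(N)$, $S_f(N)$, $S_g(N)$, and $S_{d_1}(N)$. Consistency forces $x$ to be adjacent in $C^N_1$ to $b_1$ (from $C^N$) and to $p$ (from $D^N$); a third forced neighbor of $x$ is either its neighbor $x'$ within $F$ (when $|F|\geq 2$) or $b_2$ (when $|F|=1$). A quick case-check using $d_1\notin\{e,f,g\}$, condition (ii) of the series $(e,f,g)$-property, and the disjointness of $B_1$ and $B_2$ shows that these three forced neighbors are pairwise distinct; thus $x$ would have three neighbors in the cyclic ordering of $C^N_1$, the desired contradiction.

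The main delicacy I anticipate is the block-structure bookkeeping: one must check that an $S_f(N)$-block in one circuit of $N$ remains an $S_f(N)$-block in any other circuit that contains $S_f(N)$ (which follows from the consistency of adjacencies inside $S_f(N)$), and that $x$ and $y$ coincide exactly when $|F|=1$. Once those preliminaries are in place, the case split on $|F|$ and the series-class pigeonholing that yields three distinct forced neighbors are routine.
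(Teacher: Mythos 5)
Your overall strategy---pinning down the outside neighbours of the block $F$ by consistency and then exhibiting a circuit in which some element of $F$ is forced to have three distinct neighbours---is sound, and it is essentially the same counting argument as the paper's proof (the paper locates the final contradiction inside the circuit $D^N$ rather than inside $C^N_1$, but the mechanism is identical). However, there is a genuine gap at the step ``Since $e,g\notin D$ \dots\ $D^N$ is disjoint from $S_e(N)\cup S_g(N)$.'' Condition (iii) of the series $(e,f,g)$-property asserts only that $D$ does not contain the \emph{set} $\{e,g\}$, that is, that $D$ omits at least one of $e$ and $g$; it is permitted to contain the other. This is not a pedantic reading: the paper's proof explicitly treats the case in which $e$ is in $D$ but $g$ is not, and the lemma is repeatedly applied in exactly that situation---for instance, in the $AG(2,3)$ case the series $(2,1,3)$-property is certified with $D=\{1,3,4,6,7,9\}$, which contains $g=3$. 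A proof that assumes $D\cap\{e,g\}=\emptyset$ establishes a weaker lemma than the one the paper needs.

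When, say, $e\in D$ and $g\notin D$, your argument breaks down: $S_e(N)\subseteq D^N$, so consistency forces the outside neighbour $p$ of $x$ in $D^N$ to be $b_1$ itself, your $d_1$ lies in the series class of $e$, and the ``three forced neighbours'' of $x$ in $C^N_1$ collapse to the two neighbours $b_1$ and $x'$ (or $b_1$ and $b_2$), yielding no contradiction. The repair is to run the argument from the endpoint of $F$ whose $C^N$-neighbour is guaranteed to be absent from $D^N$: since $D$ omits at least one of $e$ and $g$, say $g$, work with the endpoint $y$ adjacent to $b_2\in S_g(N)$. Then $b_2\notin D^N$, the outside neighbour $q$ of $y$ in $D^N$ lies in some $S_{d_1}(N)$ distinct from $S_g(N)$ (and from $S_f(N)$, by maximality of $F$), and in the circuit of $N$ containing $S_e(N)\cup S_f(N)\cup S_g(N)\cup S_{d_1}(N)$ the element $y$ acquires the three pairwise distinct neighbours $b_2$, $q$, and $y'$ (or $b_1$ when $|F|=1$). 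Note that in the case $S_e(N)=S_g(N)$ your assumption is automatically valid, since a circuit meets a series class in all of it or none of it, so $D$ cannot contain one of $e,g$ without containing both; the gap arises only when $S_e(N)\neq S_g(N)$.
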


\begin{proof}
Let $D$ be the circuit of $M$ whose existence is guaranteed by condition (iii). Let $D'$ be the circuit of $N$ corresponding to $D$, and let $B_f$ be an $S_f(N)$-block. Notice $D$ must have an element $d$ not in $\{e,f,g\}$, so $D'-(S_e(N)\cup S_f(N)\cup S_g(N))$ is nonempty. If $S_e(N)=S_g(N)$ and $B_f$ is adjacent to two $S_e(N)$-blocks, then $e$ is not in $D$, so $B_f$ is not adjacent to any elements of $D'-B_f$, a contradiction. Now suppose $S_e(N)\neq S_g(N)$ and, without loss of generality, suppose $e$ is in $D$ but $g$ is not. If $B_f$ is adjacent to an $S_e(N)$-block and an $S_g(N)$-block, then all of the elements in $D'-B_f$ adjacent to $B_f$ are in $S_e(N)$. This contradicts the fact that $B_f$ is adjacent to an $S_e(N)$-block and an $S_g(N)$-block in a common circuit.
\end{proof}

\begin{corollary}
\label{seriesefgcor}
Let $C$ be a circuit of a matroid $M$. Suppose that $C$ contains an element $c$ so that $M$ has the series $(e,c,g)$-property for every choice of $e$ and $g$ in $C-c$. Then no series extension of $M$ is orderable.
\end{corollary}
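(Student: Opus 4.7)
The plan is to argue by contradiction, mirroring at the block level the way Corollary~\ref{efgcor} follows from Lemma~\ref{efglemma}. Assume some series extension $N$ of $M$ admits a consistent ordering. Let $C'$ be the circuit of $N$ obtained from $C$ by replacing each $x\in C$ with its series class $S_x(N)$. Since $c\in C$, the set $S_c(N)\cap C'$ is nonempty, so the cyclic ordering of $C'$ contains at least one $S_c(N)$-block; fix such a block $B_c$.

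Next, I would examine the two immediate neighbors of $B_c$ in the cyclic ordering of $C'$. Because $B_c$ is maximal among blocks contained in $S_c(N)$, neither neighbor can lie in $S_c(N)$. On the other hand, every element of $C'$ belongs to $S_e(N)$ for some $e\in C$, so the two neighbors of $B_c$ necessarily lie in series classes $S_x(N)$ and $S_y(N)$ for some $x,y\in C-c$.

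Now the hypothesis provides us with the series $(x,c,y)$-property of $M$, so Lemma~\ref{seriesefglemma} applies. If $x\neq y$, equivalently $S_x(N)\neq S_y(N)$, then no $S_c(N)$-block is adjacent to both an $S_x(N)$-block and an $S_y(N)$-block; and if $x=y$, then no $S_c(N)$-block is adjacent to two $S_x(N)$-blocks. In either case we contradict what was just observed about $B_c$, so no consistent ordering of $N$ exists.

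The only point requiring any vigilance is confirming that the two neighbors of $B_c$ in $C'$ always come from series classes indexed by elements of $C-c$; this is immediate from $C'\subseteq\bigcup_{e\in C}S_e(N)$ together with the maximality of $B_c$. All other steps are a direct invocation of Lemma~\ref{seriesefglemma}, so no real obstacle is expected beyond bookkeeping of series classes.
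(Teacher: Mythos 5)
Your argument is correct and is precisely the argument the paper leaves implicit: Corollary~\ref{seriesefgcor} is stated without proof as an immediate consequence of Lemma~\ref{seriesefglemma}, and your analysis of the two neighbours of an $S_c(N)$-block in the circuit $C'$ is the intended deduction. The only cosmetic slip is the phrase ``$x\neq y$, equivalently $S_x(N)\neq S_y(N)$''---distinct elements of $C-c$ may lie in a common series class of $M$---but since your case distinction really runs on whether $S_x(N)=S_y(N)$, nothing is affected.
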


\begin{proof}[Proof of Proposition~\ref{no3whirlminor}]
Assume instead that $M\del X/Y\cong\mathcal{W}^3$, with $X$ coindependent and $Y$ independent. Let $L$ be the set of loops of $M^\ast/X$, and let $N$ denote $M^\ast/(X\cup L)$. Note that $N$ is a loopless rank-3 extension of $\mathcal{W}^3$, so $\si(N)$ is $3$-connected. Further, $N$ is a parallel extension of $\si(N)$, which makes $N^\ast$ an orderable series extension of $\co(N^\ast)$.

\begin{sublemma}
\label{ternarysublemma}
$\si(N)$ is ternary.
\end{sublemma}

To see this, first note that, as $N^\ast$ is orderable, it has no $U_{3,5}$-minor by Proposition~\ref{noU35minor}. Thus, $\si(N)$ has no $U_{2,5}$-minor. As $\si(N)$ is $3$-connected and its rank and corank each exceed two,~\cite[Proposition 12.2.15]{oxley} gives that $\si(N)$ has no $U_{3,5}$-minor. The rank of $F_7^\ast$ exceeds three, so $\si(N)$ also has no $F_7^\ast$-minor.

Finally, suppose $\si(N)$ has an $F_7$-minor. Then $\si(N)\vert Z\cong F_7$ for some set $Z$. As $F_7$ has no $\mathcal{W}^3$-minor, $\si(N)$ has an element $e$ not in $Z$. Then $\si(N)/e$ has a $U_{2,5}$-restriction, a contradiction. We conclude that $\si(N)$ has no $F_7$-minor. Thus~\ref{ternarysublemma} holds.

By~\ref{ternarysublemma}, $\si(N)$ has the form $PG(2,3)-K$, where $K$ is a restriction of $O_7$, the complement of $\mathcal{W}^3$ in $PG(2,3)$. The matroid $O_7$ is obtained from $M(K_4)$ by adding a point freely to an existing 3-point line; the fifteen restrictions of $O_7$ are given in Figure~\ref{o7restrictions}. In the remainder of the proof, we eliminate each possibility for $K$.

\begin{figure}
\centering
\begin{tabular}{|c|l|}
\hline
Number $n$ of elements & Restrictions of $O_7$ with $n$ elements\\
\hline
0 & $U_{0,0}$\\
\hline
1 & $U_{1,1}$\\
\hline
2 & $U_{2,2}$\\
\hline
3 & $U_{2,3}$, $U_{3,3}$\\
\hline
4 & $U_{2,4}$, $U_{3,4}$, $U_{2,3}\oplus U_{1,1}$\\
\hline
5 & $U_{2,4}\oplus U_{1,1}$, $P(U_{2,3},U_{2,3})$, $U_{2,4}\oplus_2 U_{2,3}$\\
\hline
6 & $P(U_{2,4},U_{2,3})$, $M(K_4)$, $\mathcal{W}^3$\\
\hline
7 & $O_7$\\
\hline
\end{tabular}
\caption{Choices for $K$, the complement of $\si(N)$ in $PG(2,3)$.}
\label{o7restrictions}
\end{figure}

\begin{figure}
\centering
\begin{tikzpicture}[style=thick,scale=0.9]

\foreach \x in {0,1.5,3}
\foreach \y in {0,1.5,3} \draw[fill=black] (\x,\y) circle (0.1);

\foreach \x in {0,1.5,3} \draw (\x,0)--(\x,3);
\foreach \y in {0,1.5,3} \draw (0,\y)--(3,\y);

\draw plot [smooth, tension=1.5] coordinates {(3,3) (6,1.5) (3,0)};
\draw (3,1.5)--(6,1.5);
\draw[fill=black] (6,1.5) circle (0.1);

\draw plot [smooth, tension=1.5] coordinates {(0,0) (1.5,-3) (3,0)};
\draw (1.5,0)--(1.5,-3);
\draw[fill=black] (1.5,-3) circle (0.1);

\draw (1.5,3)--(3,1.5) (0,1.5)--(1.5,0);
\draw plot [smooth, tension=1.25] coordinates {(3,1.5) (4.7,-1.7) (1.5,0)};
\draw (0,3)--(4.7,-1.7);
\draw[fill=black] (4.7,-1.7) circle (0.1);

\draw (1.5,3)--(0,1.5) (3,1.5)--(1.5,0);
\draw plot [smooth, tension=1.25] coordinates {(0,1.5) (-1.7,-1.7) (1.5,0)};
\draw (3,3)--(-1.7,-1.7);
\draw[fill=black] (-1.7,-1.7) circle (0.1);

\draw plot [smooth, tension=1.75] coordinates {(0,0) (-2,4) (1.5,3)};
\draw plot [smooth, tension=1.75] coordinates {(0,1.5) (-1,5) (3,3)};
\draw plot [smooth, tension=1.75] coordinates {(3,0) (5,4) (1.5,3)};
\draw plot [smooth, tension=1.75] coordinates {(3,1.5) (4,5) (0,3)};

\draw (6,1.5) arc (0:-135:4.5);

\draw (0,3) node [above left] {1};
\draw (1.5,3.1) node [above] {2};
\draw (3,3) node [above right] {3};
\draw (-0.05,1.5) node [left] {4};
\draw (1.7,1.45) node [above right] {5};
\draw (3.1,1.5) node [above right] {6};
\draw (0,0) node [below right] {7};
\draw (1.4,-0.1) node [below right] {8};
\draw (3,0) node [below left] {9};
\draw (6.05,1.5) node [right] {$a$};
\draw (4.7,-1.7) node [below right] {$b$};
\draw (1.5,-3) node [below] {$c$};
\draw (-1.7,-1.7) node [below left] {$d$};

\end{tikzpicture}
\caption{The matroid $PG(2,3)$.}
\label{PG23}
\end{figure}

If $K=U_{0,0}$, then $\si(N)=PG(2,3)$. Let $\si(N)$ be labelled as in Figure~\ref{PG23}. Suppose $N^\ast$ has a consistent ordering, and let $B_x$, $B_y$, and $B_z$ be $S_x$-, $S_y$-, and $S_z$-blocks in a common circuit $C$ of $N^\ast$, where $x$, $y$, and $z$ are elements of $E(\si(N))$. Assume also that $B_y$ is adjacent to $B_x$ and $B_z$. Then, by Lemma~\ref{seriesefglemma}, $\co(N^\ast)$ does not have the series $(x,y,z)$-property. We show next that

\begin{sublemma}
\label{xyzPG23}
$x$, $y$, and $z$ are collinear in $\si(N)$, and $x\neq z$.
\end{sublemma}

Suppose $x$, $y$, and $z$ are not collinear in $\si(N)$. Then one easily finds circuits of $\co(N^\ast)$ that verify the series $(x,y,z)$-property in $\co(N^\ast)$, a contradiction. Similarly, when $x=z$ there are circuits of $\co(N^\ast)$ that verify the series $(x,y,z)$-property in $\co(N^\ast)$, a contradiction. Thus,~\ref{xyzPG23} holds.

By symmetry, we may assume that $C$ is the circuit $\{1,2,3,4,5,6,7,8,9\}$ of $\co(N^\ast)$; let $C'$ be the corresponding circuit of $N^\ast$. Consider an $S_1$-block $B$ in $C'$. The block $B$ is adjacent to an $S_e$- and $S_f$-block for some $e$ and $f$ in $C-1$. By~\ref{xyzPG23}, the elements $1$, $e$, and $f$ are collinear in $\si(N)$; without loss of generality, say $e=2$ and $f=3$. Let $B_3$ be the $S_3$-block adjacent to $B$. By repeatedly applying~\ref{xyzPG23}, we have that $B_3$ is adjacent to an $S_2$-block $B_2$, the block $B_2$ is adjacent to another $S_1$-block $B_1$, the block $B_1$ is adjacent to another $S_3$-block, and so on. It follows that $C'$ has a proper subset $X$ of elements not adjacent to any element of $C'-X$, a contradiction.

\begin{figure}[ht]
\centering
\begin{tikzpicture}[style=thick]

\foreach \x in {0,1.5,3}
\foreach \y in {0,1.5,3}\draw[fill=black] (\x,\y) circle (0.1);

\foreach \x in {0,1.5,3}\draw (\x,0)--(\x,3);
\foreach \y in {0,1.5,3}\draw (0,\y)--(3,\y);

\draw (0,0)--(3,3) (0,3)--(3,0);
\draw (1.5,0)--(3,1.5)--(1.5,3)--(0,1.5)--(1.5,0);

\draw plot [smooth, tension=1.2] coordinates {(0,0) (-1,4) (1.5,3)};
\draw plot [smooth, tension=1.2] coordinates {(3,0) (4,4) (1.5,3)};
\draw plot [smooth, tension=1.2] coordinates {(0,3) (-1,-1) (1.5,0)};
\draw plot [smooth, tension=1.2] coordinates {(3,3) (4,-1) (1.5,0)};

\draw (0,3) node [above left] {1};
\draw (1.5,3.1) node [above] {2};
\draw (3,3) node [above right] {3};
\draw (0,1.5) node [left] {4};
\draw (1.65,1.1) node {5};
\draw (3,1.5) node [right] {6};
\draw (0,0) node [below left] {7};
\draw (1.5,-0.05) node [below] {8};
\draw (3,0) node [below right] {9};

\end{tikzpicture}
\begin{tikzpicture}[style=thick]

\foreach \x in {0,1.5,3}
\foreach \y in {0,1.5,3}\draw[fill=black] (\x,\y) circle (0.1);

\foreach \x in {0,1.5,3}\draw (\x,0)--(\x,3);
\foreach \y in {0,1.5,3}\draw (0,\y)--(3,\y);

\draw (0,0)--(3,3) (0,3)--(3,0);
\draw (1.5,0)--(3,1.5)--(1.5,3)--(0,1.5)--(1.5,0);

\draw plot [smooth, tension=1.2] coordinates {(0,0) (-1,4) (1.5,3)};
\draw plot [smooth, tension=1.2] coordinates {(3,0) (4,4) (1.5,3)};
\draw plot [smooth, tension=1.2] coordinates {(0,3) (-1,-1) (1.5,0)};
\draw plot [smooth, tension=1.2] coordinates {(3,3) (4,-1) (1.5,0)};

\draw (0,3) node [above left] {1};
\draw (1.5,3.1) node [above] {3};
\draw (3,3) node [above right] {2};
\draw (0,1.5) node [left] {6};
\draw (1.65,1.1) node {5};
\draw (3,1.5) node [right] {4};
\draw (0,0) node [below left] {8};
\draw (1.5,-0.05) node [below] {7};
\draw (3,0) node [below right] {9};

\end{tikzpicture}
\vspace{-0.5cm}
\caption{Two geometric representations of the matroid $AG(2,3)$.}
\label{AG23}
\end{figure}

If $K=U_{2,4}$, then $\si(N)=AG(2,3)$. Figure~\ref{AG23} gives two labelled copies of $\si(N)$ in order to illustrate some of the symmetries of this matroid. Using Corollary~\ref{seriesefgcor}, let $c$ be the element 1 in the circuit $C=\{1,2,3,4,5,6\}$ of $\co(N^\ast)$. When $e=g=2$, the circuits $C$ and $\{1,2,3,7,8,9\}$ certify that $\co(N^\ast)$ has the series $(2,1,2)$-property with $D=\{1,3,4,6,7,9\}$. Since $\co(N^\ast)$ has a doubly transitive automorphism group, it follows that $\co(N^\ast)$ has the series $(e,1,e)$-property for each $e$ in $C$. When $\{e,g\}=\{2,3\}$, the circuits $C$ and $\{1,2,3,7,8,9\}$ certify that $\co(N^\ast)$ has the series $(2,1,3)$-property with $D=\{1,3,4,6,7,9\}$. The circuits $C$, $\{1,2,4,5,7,8\}$, and $\{1,2,4,6,8,9\}$ certify that $\co(N^\ast)$ has the series $(2,1,4)$-property with $D=\{1,3,4,6,7,9\}$. A symmetric set of circuits certifies that $\co(N^\ast)$ has the series $(e,1,g)$-property for each independent set $\{e,1,g\}$ contained in $C$. By Corollary~\ref{seriesefgcor}, $N^\ast$ is not orderable.

If $K=U_{2,4} \oplus U_{1,1}$, then $\si(N)=AG(2,3)\del 9$ with $AG(2,3)$ labelled as in Figure~\ref{AG23}. Using Corollary~\ref{seriesefgcor} again, let $c$ be the element 1 in the circuit $C=\{1,2,3,7,8\}$ of $\co(N^\ast)$. When $e=g=2$, the circuits $C$ and $\{1,2,4,6,8\}$ certify that $\co(N^\ast)$ has the series $(2,1,2)$-property with $D=\{1,3,4,6,7\}$. A symmetric set of circuits certifies that $\co(N^\ast)$ has the series $(e,1,e)$-property for each $e$ in $C-1$.

When $\{e,g\}=\{2,3\}$, the circuits $C$ and $\{1,2,4,6,8\}$ certify that $\co(N^\ast)$ has the series $(2,1,3)$-property with $D=\{1,3,4,6,7\}$. From Figure~\ref{AG23}, we see that the cases $\{e,g\}=\{2,7\}$ and $\{e,g\}=\{3,8\}$ are symmetric; and the circuits $C$ and $\{1,2,4,5,7,8\}$ certify that $\co(N^\ast)$ has the series $(2,1,7)$-property with $D=\{1,3,4,5,8\}$. The cases $\{e,g\}=\{2,8\}$ and $\{e,g\}=\{3,7\}$ are also symmetric, and the circuits $C$ and $\{1,2,4,6,8\}$ certify that $\co(N^\ast)$ has the series $(2,1,8)$-property with $D=\{1,3,4,6,7\}$. Finally, the circuits $\{1,2,4,5,7,8\}$ and $\{1,3,5,6,7,8\}$ certify that $\co(N^\ast)$ has the series $(7,1,8)$-property with $D=\{1,2,3,4,5,6\}$. Corollary~\ref{seriesefgcor} now implies $N^\ast$ is not orderable.

The next five cases make frequent use of Proposition~\ref{ordimpbalanced}(ii). The strategy is to contract strategic parallel classes of $N$ to get parallel extensions of $U_{2,4}$. These parallel extensions are dual to orderable series extensions of $U_{2,4}$, and Proposition~\ref{ordimpbalanced}(ii) implies that the parallel classes of such a parallel extension have the same size. For each case, we view $\si(N)$ as a restriction of the labelled copy of $PG(2,3)$ in Figure~\ref{PG23}. For each element $e$ in $E(N)$, let $p_e$ be the size of the parallel class of $N$ containing $e$.

If $K=U_{1,1}$, then $\si(N)=PG(2,3)\del d$. The following equations are obtained by applying Proposition~\ref{ordimpbalanced}(ii) in the minors $N/\cl(\{a\})$, $N/\cl(\{b\})$, and $N/\cl(\{c\})$, respectively:
\begin{equation*}
p_b+p_c=p_1+p_2+p_3=p_4+p_5+p_6=p_7+p_8+p_9;
\end{equation*}
\begin{equation*}
p_a+p_c=p_1+p_5+p_9=p_2+p_6+p_7=p_3+p_4+p_8;
\end{equation*}
\begin{equation*}
p_a+p_b=p_3+p_6+p_9=p_2+p_5+p_8=p_1+p_4+p_7.
\end{equation*}

Combining these equations, we obtain
\begin{equation*}
3(p_a+p_b)+3(p_a+p_c)+3(p_b+p_c)=3(p_1+p_2+\cdots+p_9),
\end{equation*}
which implies
\begin{equation*}
2(p_a+p_b+p_c)=p_1+p_2+\cdots+p_9,
\end{equation*}
and therefore
\begin{equation*}
3(p_a+p_b+p_c)=\vert E(N)\vert.
\end{equation*}

We conclude that exactly one-third of the elements of $E(N)$ lie on the line $\{a,b,c\}$. By symmetry, the same is true of the lines $\{1,6,8\}$, $\{3,5,7\}$, and $\{2,4,9\}$, so now four disjoint lines each account for one-third of the elements in $N$, a contradiction.

If $K=U_{2,3}$, then $\si(N)=PG(2,3)\del \{b,c,d\}$. The following equations are obtained by applying Proposition~\ref{ordimpbalanced}(ii) in the minors $N/\cl(\{1\})$, $N/\cl(\{2\})$, and $N/\cl(\{3\})$, respectively:
\begin{equation}
\label{case-u23-1}
p_2+p_3+p_a=p_4+p_7=p_6+p_8=p_5+p_9=\frac{1}{4}\big(\vert E(N)\vert-p_1\big);
\end{equation}
\begin{equation}
\label{case-u23-2}
p_1+p_3+p_a=p_4+p_9=p_5+p_8=p_6+p_7=\frac{1}{4}\big(\vert E(N)\vert-p_2\big);
\end{equation}
\begin{equation*}
p_1+p_2+p_a=p_5+p_7=p_6+p_9=p_4+p_8=\frac{1}{4}(\vert E(N)\vert-p_3).
\end{equation*}
Solving equations~(\ref{case-u23-1}) and~(\ref{case-u23-2}) for $\vert E(N)\vert$, we see that
\begin{equation*}
p_1+4p_2+4p_3=4p_1+p_2+4p_3,
\end{equation*}
so $p_1=p_2$. Through additional substitutions, it follows that $p_i=p_j$ for each $i,j\neq a$. But now $p_a=0$, a contradiction.

If $K=P(U_{2,3},U_{2,4})$, then $\si(N)=PG(2,3)\del \{7,8,9,a,b,d\}\cong P_7$. From the minors $N/\cl(\{1\})$ and $N/\cl(\{3\})$ and Proposition~\ref{ordimpbalanced}(ii), we get the equations
\begin{align*}
p_2+p_3&=p_4+p_c=p_5=p_6,\\
p_1+p_2&=p_6+p_c=p_4=p_5.
\end{align*}
It follows that $p_c=0$, a contradiction.

If $K=\mathcal{W}^3$, then $\si(N)=PG(2,3)\del \{6,8,9,b,c,d\}\cong O_7$. From the minors $N/\cl(\{7\})$ and $N/\cl(\{5\})$ we get the equations
\begin{equation*}
p_1+p_4=p_3+p_5=p_2=p_a,
\end{equation*}
\begin{equation*}
p_3+p_7=p_4+p_a=p_1=p_2.
\end{equation*}
It follows that $p_4=0$, a contradiction.

If $K=O_7$, then $\si(N)=PG(2,3)\del 6,8,9,a,b,c,d\cong\mathcal{W}^3$. From the minors $N/\cl(\{2\})$ and $N/\cl(\{4\})$ we get the equations
\begin{equation*}
p_1+p_3=p_4=p_5=p_7,
\end{equation*}
\begin{equation*}
p_1+p_7=p_2=p_3=p_5,
\end{equation*}
so $p_1=0$, a contradiction.

For the next six cases, we continue to view $N$ as a parallel extension of a restriction of $PG(2,3)$, with $PG(2,3)$ labelled as in Figure~\ref{PG23}. However, we now represent the deletion of an element $e$ from $PG(2,3)$ by setting $p_e$ to be 0. Each of these cases is eliminated using the following assertion.

\begin{sublemma}
\label{thefallout}
Let $N$ be a restriction of $PG(2,3)$ such that
\begin{itemize}
\item[(i)]{$p_c=p_d=0$;}
\item[(ii)]{$p_x\neq 0$ for each $x$ in $\{a,b,1\}$;}
\item[(iii)]{$\si(N/\cl(\{x\}))\cong U_{2,4}$ for each $x$ in $\{a,b,1\}$; and}
\item[(iv)]{$p_2$ and $p_3$ are not both zero.}
\end{itemize}
Then $N^\ast$ is not orderable.
\end{sublemma}

To see this, we first use the minors $N/\cl(\{a\})$ and $N/\cl(\{b\})$ to establish the equations
\begin{equation*}
p_b=p_1+p_2+p_3=p_4+p_5+p_6=p_7+p_8+p_9,
\end{equation*}
\begin{equation*}
p_a=p_2+p_6+p_7=p_1+p_5+p_9=p_3+p_4+p_8,
\end{equation*}
from which we obtain
\begin{equation*}
3p_a=p_1+p_2+\cdots+p_9=3p_b,
\end{equation*}
so $p_a=p_b$, and $\vert E(N)\vert=5p_a$. Now, $N/\cl(\{1\})$ gives that
\begin{equation*}
\vert E(N)\vert-p_1=4(p_2+p_3+p_a),
\end{equation*}
and substituting $5p_a$ for $\vert E(N)\vert$ produces
\begin{equation*}
p_a=p_1+4(p_2+p_3).
\end{equation*}
Finally, since $p_a=p_1+p_2+p_3$, we deduce that $p_2+p_3=0$, a contradiction. Thus~\ref{thefallout} holds.

The six options for $K$ eliminated by~\ref{thefallout} are the matroids $U_{2,2}$, $U_{3,3}$, $U_{3,4}$, $U_{2,3}\oplus U_{1,1}$, $P(U_{2,3},U_{2,3})$, and $U_{2,3}\oplus_2 U_{2,4}$. It is straightforward to check that, for each $K$ in this list, we may set classes of $PG(2,3)$ equal to zero in such a way that the zeroed classes form a restriction isomorphic to $K$, and the conditions of~\ref{thefallout} hold. For example, $U_{2,3}\oplus_2 U_{2,4}$ is produced when $p_5$, $p_7$, $p_9$, $p_c$, and $p_d$ are the zeroed classes.

\begin{figure}
\centering

\begin{tikzpicture}[style=thick]

\foreach \x in {0,1,2} \draw[fill=black] (90+120*\x:2) circle (0.1);
\foreach \x in {0,1,2} \draw[fill=black] (-90+120*\x:1) circle (0.1);
\draw[fill=black] (0,0) circle (0.1);

\foreach \x in {0,1,2} \draw (90+120*\x:2)--(-90+120*\x:1) (90+120*\x:2)--(210+120*\x:2);

\draw (60:0.4) node {$3$};
\draw (90:2.3) node {$1$};
\draw (210:2.3) node {$5$};
\draw (330:2.3) node {$7$};
\draw (150:1.3) node {$2$};
\draw (-90:1.3) node {$6$};
\draw (30:1.3) node {$4$};

\end{tikzpicture}
\caption{The matroid $F_7^-$.}
\label{nonfano}
\end{figure}

In the final case, $K=M(K_4)$ and $\si(N)=F_7^-$. Label $F_7^-$ as in Figure~\ref{nonfano}, and, for each $e$ in $[7]$, let $S_e=S_e(N^\ast)$. Suppose $N^\ast$ has a consistent ordering, and let $B$ be an $S_1$-block in the ordering. In $N^\ast$, there is a circuit corresponding to each circuit $\{1,2,3,5,7\}$, $\{1,3,4,5,7\}$, and $\{1,3,5,6,7\}$ of $\co(N^\ast)$; let $\mathcal{X}$ be the collection of these circuits of $N^\ast$. Similarly, let $\mathcal{Y}$ be the collection of circuits of $N^\ast$ corresponding to the circuits $\{1,2,3,4\}$, $\{1,4,5,6\}$, and $\{1,2,6,7\}$ of $\co(N^\ast)$.

Suppose $B$ is adjacent to an $S_e$-block for some $e$ in $\{2,4,6\}$. Then the consistency of the circuits in $\mathcal{X}$ implies that $B$ is adjacent to an $S_e$-block for every $e$ in $\{2,4,6\}$. The circuits in $\mathcal{Y}$ now imply that $B$ is adjacent to an $S_2$-, $S_4$-, and $S_6$-block. Further, $B$ is not adjacent to an $S_e$-block for any $e$ in $\{3,5,7\}$. It follows that, in the circuit of $N^\ast$ corresponding to $\{1,2,3,5,7\}$, the block $B$ must be adjacent to a pair of $S_2$-blocks, contradicting the fact that $B$ is adjacent to both an $S_2$-block and an $S_4$-block in the circuit of $N^\ast$ corresponding to $\{1,2,3,4\}$.

We now know that, for each $e$ in $\{2,4,6\}$, the block $B$ is not adjacent to an $S_e$-block. The circuits in $\mathcal{Y}$ now imply that, in the circuit of $N^\ast$ corresponding to $\{1,2,3,5,7\}$, the block $B$ is adjacent to an $S_e$-block for every $e$ in $\{3,5,7\}$. This contradiction implies $N^\ast$ is not orderable.
\end{proof}

The next proposition is a result of Oxley~\cite{oxleynonbin} (see also ~\cite[Corollary 12.2.18]{oxley}). We will use it to prove Theorem~\ref{nonbin3conn}.

\begin{proposition}
\label{nonbinmanyminor}
A $3$-connected non-binary matroid whose rank and corank exceed two has a minor isomorphic to one of $\mathcal{W}^3$, $P_6$, $Q_6$, and $U_{3,6}$.
\end{proposition}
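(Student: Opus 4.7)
My plan is to combine Tutte's excluded-minor characterization of binary matroids with Seymour's Splitter Theorem, reducing $M$ to a $6$-element $3$-connected non-binary minor of rank and corank both equal to $3$, and then enumerating the possibilities.

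Since $M$ is non-binary, Tutte's theorem gives that $U_{2,4}$ is a minor of $M$. If $M$ is itself a whirl $\mathcal{W}^r$ with $r \geq 3$, then $\mathcal{W}^3$ appears as a minor of $M$ by iteratively deleting a rim element and contracting an incident spoke, and we are done. Otherwise, starting from $M$ I would apply the Splitter Theorem repeatedly (together with Bixby's Lemma as needed to control the effect of deletions and contractions on $3$-connectedness), producing a descending chain of $3$-connected minors of $M$, each containing a $U_{2,4}$-minor and each obtained from the previous by removing a single element. The invariant to maintain is that both the rank and the corank exceed $2$; the reduction terminates when the minor $N$ of $M$ has exactly $6$ elements, and hence $r(N) = r^*(N) = 3$.

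To complete the argument, I would enumerate all $3$-connected matroids on $6$ elements with $r = r^* = 3$. Viewing such a matroid as a point configuration in a rank-$3$ geometry and classifying by the number and configuration of its $3$-point lines, one obtains exactly the list $U_{3,6}$, $P_6$, $Q_6$, $\mathcal{W}^3$, and $M(K_4)$. Since $N$ has $U_{2,4}$ as a minor, $N$ is non-binary, so $N \not\cong M(K_4)$, and the conclusion follows.

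The main obstacle is executing the reduction step cleanly. One must guarantee that from any $3$-connected matroid with both rank and corank exceeding $2$ and with more than $6$ elements, it is always possible to remove a single element via deletion or contraction so as to preserve $3$-connectedness as well as both rank and corank bounds. This requires a careful case analysis near the small-minor boundary, where a naive application of the Splitter Theorem could collapse the rank or corank to $2$; the combination of the Splitter Theorem with Bixby's Lemma provides the key technical tools for navigating this.
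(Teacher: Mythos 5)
The paper gives no proof of this proposition: it is invoked as a known result of Oxley~\cite{oxleynonbin} (see also \cite[Corollary 12.2.18]{oxley}), so there is no in-paper argument to compare yours against. On its own terms, your outline follows the standard strategy, and several pieces are correct: a whirl $\mathcal{W}^r$ with $r\geq 3$ does have a $\mathcal{W}^3$-minor, and the $3$-connected six-element matroids with $r=r^\ast=3$ are indeed exactly $U_{3,6}$, $P_6$, $Q_6$, $\mathcal{W}^3$, and $M(K_4)$, of which only $M(K_4)$ is binary.

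The genuine gap is the reduction step, which you acknowledge as ``the main obstacle'' but never execute --- and it is precisely where all the work lies. If one assumes $M$ has no $\mathcal{W}^3$-minor (so that $U_{2,4}=\mathcal{W}^2$ is a largest whirl-minor and the Splitter Theorem applies), one gets a chain of $3$-connected matroids from $U_{2,4}$ up to $M$, each with a $U_{2,4}$-minor. But the six-element member of that chain need not have rank and corank three: it can be $U_{2,6}$ or $U_{4,6}$, both $3$-connected, non-binary, and absent from the target list. So your assertion that ``the reduction terminates when the minor has exactly $6$ elements, and hence $r(N)=r^\ast(N)=3$'' does not follow; the invariant $r,r^\ast\geq 3$ is not preserved for free by the Splitter Theorem, and Bixby's Lemma only guarantees that one of $\co(M\del e)$ and $\si(M/e)$ is $3$-connected --- it says nothing about retaining the $U_{2,4}$-minor or the rank and corank bounds. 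Handling the chains that climb through the rank-$2$ uniform matroids $U_{2,k}$ before coextending (and dually) is the actual content of Oxley's theorem, and it is missing here. Since the paper treats the statement as a citation, the appropriate course is to cite \cite{oxleynonbin} rather than to supply this incomplete sketch as a proof.
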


\begin{proof}[Proof of Theorem~\ref{nonbin3conn}]
Assume that the theorem fails for $M$. Then $r(M)\geq 3$. As $P_6$, $Q_6$, and $U_{3,6}$ each have $U_{3,5}$ as a minor, Proposition~\ref{nonbinmanyminor} and Propositions~\ref{noU35minor} and~\ref{no3whirlminor} now imply that $r^\ast(M)\leq 2$, so $r^\ast(M)=2$. As $M$ is 3-connected, it follows that $M\cong U_{n-2,n}$ for some $n\geq 5$. Hence $M$ has a $U_{3,5}$-minor, a contradiction.
\end{proof}

If $\{M_1,M_2,\dots,M_n\}$ is a set of a matroids, then a \emph{matroid-labelled tree} with vertex set $\{M_1,M_2,\dots,M_n\}$ is a tree $T$ such that
\begin{enumerate}
\item[(i)]{if $e$ is an edge of $T$ with endpoints $M_i$ and $M_j$, then $E(M_i)\cap E(M_j)=\{e\}$, and $\{e\}$ is not a separator of $M_i$ or $M_j$; and}
\item[(ii)]{$E(M_i)\cap E(M_j)$ is empty if $M_i$ and $M_j$ are non-adjacent.}
\end{enumerate}
The matroids $M_1, M_2,\dots,M_n$ are called the \emph{vertex labels} of $T$. Now suppose $e$ is an edge of $T$ with endpoints $M_1$ and $M_2$. We obtain a new matroid-labelled tree $T/e$ by contracting $e$ and relabelling the resulting vertex with $M_1\oplus_2 M_2$. As 2-sum is associative, $T/X$ is well defined for all subsets $X$ of $E(T)$.

Let $T$ be a matroid-labelled tree with $V(T)=\{M_1,M_2,\dots,M_n\}$ and\\
$E(T)=\{e_1,e_2,\dots,e_{n-1}\}$. Then $T$ is a \emph{tree decomposition} of a connected matroid $M$ if
\begin{enumerate}
\item[(i)]{$E(M)=(E(M_1)\cup E(M_2)\cup\cdots\cup E(M_n))-\{e_1,e_2,\dots,e_{n-1}\}$;}
\item[(ii)]{$\vert E(M_i)\vert\geq 3$ for all $i$ unless $\vert E(M)\vert<3$, in which case $n=1$ and $M=M_1$; and}
\item[(iii)]{$M$ labels the single vertex of $T/E(T)$.}
\end{enumerate}
In this case, the elements $\{e_1,e_2,\dots,e_{n-1}\}$ are the \emph{edge labels} of $T$. The next theorem of Cunningham and Edmonds~\cite{cunned} (see also~\cite[Theorem 8.3.10]{oxley}) tells us that $M$ has a \emph{canonical tree decomposition}, unique to within relabelling of the edges.

\begin{theorem}
Let $M$ be a $2$-connected matroid. Then $M$ has a tree decomposition $T$ in which every vertex label is $3$-connected, a circuit, or a cocircuit, and there are no two adjacent vertices that are both labelled by circuits or are both labelled by cocircuits. Moreover, $T$ is unique to within relabelling of its edges.
\end{theorem}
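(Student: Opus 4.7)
The plan is to prove existence and uniqueness separately, both by induction on $\vert E(M)\vert$.

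For existence, the base case is immediate: if $M$ is $3$-connected, a circuit, or a cocircuit, the single-vertex tree labelled by $M$ is already a decomposition of the required form. For the inductive step, suppose $M$ is none of these. Then $M$ admits a non-trivial $2$-separation, so one may write $M = M_1 \oplus_2 M_2$ along some basepoint $p$, where both $M_1$ and $M_2$ have fewer elements than $M$. Apply the inductive hypothesis to each, obtaining tree decompositions $T_1$ and $T_2$, and join them by a single new edge labelled $p$ between the vertices of $T_1$ and $T_2$ whose labels contain $p$. The resulting labelled tree decomposes $M$, but it may still violate the adjacency clause at the new edge or elsewhere. To fix this, apply the following cleanup rule: whenever two adjacent vertex labels are both circuits $C_1, C_2$ sharing a tree edge $e$, the $2$-sum $C_1 \oplus_2 C_2$ along $e$ is itself a circuit, so one may contract $e$ and replace the merged vertex by this larger circuit; do the analogous operation for cocircuits. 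Iterating until no such adjacencies remain produces a canonical decomposition of $M$.

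For uniqueness, the plan is to associate to each edge $e$ of a tree decomposition $T$ a specific $2$-separation of $M$, namely the partition of $E(M)$ induced by the two components of $T - e$. One then verifies that any two $2$-separations arising this way are \emph{nested} rather than crossing, and that the collection of $2$-separations realised by the edges of $T$ is in fact a maximal collection of pairwise nested non-trivial $2$-separations of $M$. The vertex labels are then forced by the structure of $M$ restricted to each ``block'' cut out between consecutive tree edges, with the trichotomy ($3$-connected, circuit, cocircuit) reflecting the local connectivity type of that block. The forbidden adjacencies of two circuits or two cocircuits are precisely the ambiguity coming from the associativity of $2$-sum within each family: $(C_1 \oplus_2 C_2) \oplus_2 C_3 = C_1 \oplus_2 (C_2 \oplus_2 C_3)$ when all three summands are circuits, and similarly for cocircuits. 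Ruling these out removes the only remaining source of non-uniqueness, so comparing two canonical decompositions $T$ and $T'$ via their induced $2$-separations yields a tree isomorphism matching vertex labels, with the sole freedom being the choice of names for the edges.

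The main obstacle is uniqueness. The existence argument is essentially a greedy $2$-sum decomposition followed by routine cleanup. The uniqueness argument, by contrast, requires a careful analysis of how $2$-separations of $M_1 \oplus_2 M_2$ pull back to $2$-separations of $M_1$ and $M_2$, and a verification that the forbidden circuit-circuit and cocircuit-cocircuit adjacencies account for exactly the associativity-type redundancy in iterating $2$-sums within each family. Once this structural correspondence between edges of $T$ and nested non-crossing $2$-separations of $M$ is established, uniqueness follows because the collection of such $2$-separations is an invariant of $M$ itself, independent of any chosen decomposition.
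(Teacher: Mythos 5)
The paper does not actually prove this statement: it is the Cunningham--Edmonds canonical tree decomposition theorem, quoted with citations to Cunningham's thesis and to Oxley's book, so there is no in-paper argument to compare yours against. Judged on its own, your existence argument is the standard one and is fine in outline: split along any non-trivial $2$-separation, recurse, join the subtrees, then contract circuit--circuit and cocircuit--cocircuit edges, using the fact that a $2$-sum of two circuits along the basepoint is again a circuit, and dually for cocircuits.

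The uniqueness half, however, rests on a claim that is false. You assert that the $2$-separations displayed by the edges of the canonical tree form a \emph{maximal} collection of pairwise nested non-trivial $2$-separations of $M$, and that uniqueness follows because such a collection is an invariant of $M$. Neither part survives inspection. Take $M=N\oplus_2 C$ where $N$ is $3$-connected with at least four elements and $C$ is a four-element circuit $\{p,a,b,c\}$ with basepoint $p$. The canonical tree has one edge, displaying the single separation $(E(N)-p,\{a,b,c\})$. But $(\{a,b\},E(M)-\{a,b\})$ is a non-trivial $2$-separation nested with the displayed one, so the displayed family is not maximal among nested families; moreover there are several distinct maximal nested families (one containing $(\{a,b\},\cdot)$, another containing $(\{b,c\},\cdot)$, and these two separations cross each other), so ``maximal nested family'' is not an invariant of $M$ either. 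The real content of the uniqueness proof is to identify the correct invariant: the edges of the canonical tree display exactly those $2$-separations that are nested with \emph{every} $2$-separation of $M$, while families of mutually crossing separations are absorbed into single circuit or cocircuit vertices --- and the pivotal lemma, absent from your sketch, is that a maximal set of pairwise crossing $2$-separations forces the corresponding piece of $M$ to be a circuit or a cocircuit. You name the right ingredients (nestedness, associativity of $2$-sums within each family), but the characterization you actually state would, if used, certify non-isomorphic trees as both canonical.
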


Let $T$ be a tree decomposition of a matroid $M$, and let $N$ and $p$ be a vertex label and edge label of $T$, respectively. For the remainder of this section, we define $M_{p,N}$ and $M_{p,N}'$ to be the matroids such that $M=M_{p,N} \oplus_2 M_{p,N}'$ with basepoint $p$, where $E(M_{p,N})$ contains the subset of $E(M)$ corresponding to the component of $T\del p$ containing $N$. Notice that if the vertex labels $M_1$ and $M_2$ lie in different components of $T\del p$, then $M_{p,M_1} = M_{p,M_2}'$.

In the next four lemmas, $M$ is assumed to be a connected, orderable, non-binary matroid whose canonical tree decomposition is $T$.

\begin{lemma}
\label{unisizer}
Suppose that $T$ has a vertex label $U$ that is isomorphic to $U_{2,n}$ for some $n\geq 4$. Then, for all $e,f\in E(U)$,
\begin{enumerate}
\item[(i)]{$e$ is an edge label of $T$, unless $M$ is a parallel extension of $U_{2,n}$;}
\item[(ii)]{all circuits of $M_{e,U}'$ containing $e$ have the same size; and}
\item[(iii)]{the circuits of $M_{e,U}'$ containing $e$ have the same size as the circuits of $M_{f,U}'$ containing $f$.}
\end{enumerate}
\end{lemma}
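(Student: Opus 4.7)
My plan is to analyze the canonical tree decomposition $T$ of $M$ locally at the vertex $U\cong U_{2,n}$, using orderability together with the previous propositions to pin down the structure of neighboring vertices and the associated 2-sums. All three parts of the lemma will flow from this structural analysis, treated in order.

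For part (i), I would assume for contradiction that some $e\in E(U)$ is not an edge label (so $e\in E(M)$) and that $M$ is not a parallel extension of $U_{2,n}$; the latter means some edge label $p$ of $U$ connects $U$ to a vertex $V$ of $T$ that is not a cocircuit. I would then rule out each remaining possibility for $V$. If $V$ is a circuit $U_{m-1,m}$, I would construct a deletion-minor $M^{\flat}$ by trimming off all subtrees of $T$ other than $\{U,V\}$ (deleting the non-basepoint elements on the other sides preserves orderability by Proposition \ref{basics}(i)); the resulting $M^{\flat}$ is essentially $U'\oplus_2 V'$ at $p$, where $U'\cong U_{2,n'}$ with $n'\geq 4$ (arranging the trimming so $e$ is retained) and $V'\cong U_{m'-1,m'}$ with $m'\geq 3$, giving an unbalanced series extension of $U_{2,n'}$ that contradicts Proposition \ref{ordimpbalanced}(ii). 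If $V$ is 3-connected with at least four elements, I would exhibit two distinct circuits $C_1,C_2$ of $V$ through $p$ with $|C_1\triangle C_2|=2$; combined with a triangle $\{e,f,p\}$ of $U$, the 2-sum yields in $M$ two circuits sharing a common set $X$ of size at least three and differing in exactly one element, contradicting orderability via Proposition \ref{mickeymouse}.

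For parts (ii) and (iii), if part (i) gives $M$ as a parallel extension of $U_{2,n}$, then for each edge label $e$ of $U$, $M_{e,U}'$ is isomorphic to some $U_{1,m_e}$, whose circuits through $e$ are all two-element pairs $\{e,a\}$, making both (ii) and (iii) immediate. In the remaining case, part (i) forces every element of $E(U)$ to be an edge label, and I would establish (ii) and (iii) by an adjacency-counting argument modeled on Lemma \ref{uniformkeylemma}. For any pair $\{f,g\}\subseteq E(U)$, the cross-circuits of $M$ of the form $\{f,g\}\cup(C-p)$ (ranging over edge labels $p$ of $U$ distinct from $f,g$ and circuits $C$ of $M_{p,U}'$ through $p$) must all contain $\{f,g\}$ as an adjacent pair by consistency with the triangles of $U$, and the remainder of each such circuit forms a block on the side $E_p := E(M_{p,U}')-p$. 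Consistency of the ordering across the various choices of $\{f,g\}$ and $C$ forces $|C|$ to be independent of $C$, giving (ii); comparing the resulting constant sizes at different edge labels via the cross-circuits coming from triangles $\{e,f,g\}$ of $U$ then gives (iii) by the same consistency argument.

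The main obstacle I anticipate is the 3-connected binary case for $V$ in part (i), since such $V$ can itself be orderable (for instance $V\cong M(K_4)$) and may possess no pair of circuits through $p$ whose symmetric difference has size two, so the Proposition \ref{mickeymouse} approach is not directly applicable. I expect to handle this either by reducing to a $U_{2,4}$-restriction of $V$ through $p$ where the symmetric-difference argument does apply, or by producing a $U_{3,5}$ or $\mathcal{W}^3$ minor of the 2-sum $U\oplus_2 V$ (contradicting Proposition \ref{noU35minor} or Proposition \ref{no3whirlminor}).
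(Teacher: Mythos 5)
Your proof of part (i) rests on a false reduction: ``$M$ is not a parallel extension of $U_{2,n}$'' does not imply that some neighbour of $U$ in $T$ fails to be a cocircuit. All neighbours of $U$ can be cocircuits while the tree extends beyond them (for instance, $U$ adjacent only to a cocircuit that is itself adjacent to a circuit); in that situation $M_{p,U}'$ still has a circuit through $p$ of size at least three, which is exactly the configuration that must be refuted, but your case analysis never reaches it. Your $3$-connected case is also unsalvageable as stated: such a $V$ is simple, and if $V$ is binary the symmetric difference of two distinct circuits is a nonempty disjoint union of circuits and so has size at least three, so the pair $C_1,C_2$ with $\vert C_1\triangle C_2\vert=2$ needed for Proposition~\ref{mickeymouse} never exists. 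You flag this, but the fallbacks you suggest amount to showing that no $3$-connected binary label can be adjacent to $U$, which is Lemma~\ref{nobinlabels} --- a later result whose proof depends on the present lemma. Finally, your argument for (ii) and (iii) treats $f,g\in E(U)$ as elements of $M$ and appeals to ``consistency with the triangles of $U$,'' but in the case at hand every element of $E(U)$ is an edge label, so $f,g\notin E(M)$ and the triangles of $U$ are not circuits of $M$; there is nothing for the ordering to be consistent with.

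The paper avoids all of this with a single construction that you should adopt: for each $y\in E(U)$ labelling an edge of $T$, choose any circuit $C_y$ of $M_{y,U}'$ containing $y$ and attach it to $U$ by $2$-sum. The result $M''$ is a restriction of $M$ and a series extension of $U_{2,n}$ in which $y$ is replaced by the class $C_y-y$ and each element of $E(U)\cap E(M)$ is a singleton class. If $M$ is not a parallel extension of $U_{2,n}$, some $C_x$ may be chosen with $\vert C_x\vert\geq 3$, and Proposition~\ref{ordimpbalanced}(ii) then forces all series classes of $M''$ to have the same size, at least two. This rules out singleton classes, giving (i); and since each $C_y$ was arbitrary, every circuit of $M_{y,U}'$ through $y$ has the common size $\vert C_x\vert$, giving (ii) and (iii) at once, with no case analysis on the labels of $T$.
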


\begin{proof}
We may assume that $M$ is not a parallel extension of $U_{2,n}$ otherwise (i) holds. For each element $y$ of $E(U)$ that labels an edge of $T$, let $C_y$ be a circuit of $M_{y,U}'$ that contains $y$. As $M$ is not a parallel extension of $U_{2,n}$, we may assume that $\vert C_x\vert\geq 3$ for some element $x$. Let $M''$ be the matroid that is obtained from $U$ by attaching each $C_y$ via 2-sum. This matroid is a restriction of $M$ having $C_x-x$ as a non-trivial series class. Moreover, $M''$ is a series extension of $U_{2,n}$ and it is orderable. Thus, by Proposition~\ref{ordimpbalanced}(ii), $M''$ is a balanced series extension of $U_{2,n}$. Hence (i) holds. Furthermore, $\vert C_x\vert=\vert C_y\vert\geq 3$ for all $y$ in $E(U)-\{x\}$. Parts (ii) and (iii) now follow without difficulty.
\end{proof}

The next lemma generalizes Lemma~\ref{unisizer}(ii) to arbitrary edges of $T$.

\begin{lemma}
\label{circuitsizer}
Suppose that $T$ has a vertex label $U$ that is isomorphic to $U_{2,n}$ for some $n\geq 4$, and suppose $e$ is an edge label of $T$. Then the circuits of $M_{e,U}'$ that contain $e$ all have the same size.
\end{lemma}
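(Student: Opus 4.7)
The plan is to proceed by induction on the distance $d(e)$ from $U$ to the nearest endpoint of $e$ in $T$. The base case $d(e) = 0$ places $e$ in $E(U)$, and the statement is then exactly Lemma~\ref{unisizer}(ii).

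For the inductive step, suppose $d(e) \geq 1$. Let $V$ be the endpoint of $e$ closer to $U$ and let $f$ be the unique edge of $T$ incident to $V$ on the path from $V$ to $U$, so $d(f) = d(e) - 1$. By the inductive hypothesis (or Lemma~\ref{unisizer}(ii) when $d(e) = 1$), all circuits of $M_{f,U}'$ containing $f$ share a common size $s$. Let $g_1,\dots,g_k$ be the edges of $T$ incident to $V$ other than $e$ and $f$, and for each $i$ let $N_i$ be the matroid assembled from the component of $T\setminus g_i$ that does not contain $V$. Setting $A_V := V \oplus_2 N_1 \oplus_2 \cdots \oplus_2 N_k$ (with the indicated 2-sums performed along $g_1,\dots,g_k$), the associativity of $\oplus_2$ and the structure of $T$ yield
$$M_{f,U}' \;=\; A_V \oplus_2 M_{e,U}' \qquad \text{with basepoint } e,$$
and moreover $\{e,f\} \subseteq E(A_V)$.

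Because $A_V$ is an iterated $2$-sum of connected matroids it is itself connected, so it has a circuit $D$ with $\{e,f\}\subseteq D$; I fix one such $D$ once and for all. For every circuit $C$ of $M_{e,U}'$ containing $e$, the standard description of circuits in a $2$-sum gives that $(D-e) \cup (C-e)$ is a circuit of $M_{f,U}'$ containing $f$; hence it has size $s$, and $|C| = s - |D| + 2$. The right-hand side does not depend on $C$, so all circuits of $M_{e,U}'$ containing $e$ share a common size, completing the induction. The one point requiring care is the matroid-level identification $M_{f,U}' = A_V \oplus_2 M_{e,U}'$, which is routine tree-decomposition bookkeeping using associativity of $\oplus_2$; once it is in hand, the rest follows immediately from the $2$-sum circuit characterization and the induction hypothesis.
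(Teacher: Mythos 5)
Your proof is correct and uses essentially the same mechanism as the paper: fix one circuit through the intermediate part of the tree containing both basepoints, compose it with an arbitrary circuit of the far side via the 2-sum circuit description, and anchor everything on Lemma~\ref{unisizer}(ii). The only cosmetic difference is that you peel off one tree-edge at a time by induction on the distance to $U$, whereas the paper jumps in a single step from $e$ to the edge $f$ incident with $U$ by fixing a circuit of the whole intermediate matroid $M'$ containing both $e$ and $f$.
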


\begin{proof}
Let $N$ be the endpoint of $e$ in the same component of $T\del e$ as $U$. If $U=N$, then the assertion holds by Lemma~\ref{unisizer}(ii), so assume otherwise. Let $f$ be the label of the edge incident with $U$ that lies on the path connecting $U$ to $N$ in $T$. Next, let $T'$ be the subtree of $T\del\{e,f\}$ containing $N$, and let $M'$ be the matroid with tree decomposition $T'$.

Fix a circuit $C$ of $M'$ that contains $e$ and $f$. Observe that, for each circuit $D$ of $M_{e,N}'$ that contains $e$, there is a circuit $(D-e)\cup(C-e)$ of $M_{f,U}'$ that contains $f$. By Lemma~\ref{unisizer}(ii), the quantity $\vert (D-e)\cup(C-e)\vert$ is the same for each choice of $D$, so every such circuit $D$ has the same size.
\end{proof}

\begin{lemma}
\label{U2nhighlander}
The tree $T$ has exactly one $3$-connected non-binary vertex label, and this label is isomorphic to $U_{2,n}$ for some $n\geq 4$.
\end{lemma}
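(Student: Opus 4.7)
The argument has three parts: existence, classification, and uniqueness.

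\emph{Existence.} Every circuit vertex label $U_{k-1,k}$ and every cocircuit vertex label $U_{1,k}$ is graphic, hence binary; since binariness is preserved by 2-sum, at least one vertex label of $T$ must be 3-connected non-binary, for otherwise $M$ itself would be binary.

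\emph{Classification.} Let $N$ be any 3-connected non-binary vertex label of $T$. I first observe that $N$ is a minor of $M$ (after relabeling its edge labels): for each edge $e$ of $N$ in $T$, fix a circuit $C_e$ of $M_{e,N}'$ through $e$, delete $E(M_{e,N}') - C_e$ from $M$, and contract all but one element $c_e$ of $C_e - e$. This collapses the branch of $e$ to the parallel pair $\{e, c_e\}$, effectively relabeling $e$ as $c_e$ in $N$; performing this at every edge of $N$ exhibits $N$ as a minor of $M$. By Propositions~\ref{noU35minor} and~\ref{no3whirlminor}, $M$ has no $U_{3,5}$- or $\mathcal{W}^3$-minor, so neither does $N$. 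Proposition~\ref{nonbinmanyminor} then forces $r(N) \leq 2$ or $r^*(N) \leq 2$, since each of $\mathcal{W}^3, P_6, Q_6, U_{3,6}$ contains one of the forbidden minors. Combined with 3-connectivity, non-binariness, and $|E(N)| \geq 4$, this yields $N \cong U_{2,k}$ for some $k \geq 4$ (the alternative $U_{k-2,k}$ with $k \geq 5$ is excluded since it contains $U_{3,5}$).

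\emph{Uniqueness.} Suppose, for contradiction, $T$ has two 3-connected non-binary vertex labels $U_1 \cong U_{2,m}$ and $U_2 \cong U_{2,n}$. If $M$ is a parallel extension of $U_{2,n'}$ for some $n'$, then $T$ has a unique 3-connected non-binary vertex label, contradicting the supposition; so Lemma~\ref{unisizer}(i) gives that every element of $E(U_1) \cup E(U_2)$ is an edge label. Let $e_1 \in E(U_1)$ and $f_1 \in E(U_2)$ be the edges at the two ends of the unique $U_1$-to-$U_2$ path in $T$, and let $s_1, s_2$ be the common circuit sizes from Lemma~\ref{unisizer}(iii). Write $\alpha \geq 0$ for the total contribution to $|C_{e_1}|$ from the vertices strictly between $U_1$ and $U_2$ on this path (a circuit vertex of size $k$ contributes $k-2$, a cocircuit contributes $0$, and a 3-connected intermediate contributes at least $1$). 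A circuit of $M_{e_1,U_1}'$ through $e_1$ that traverses each intermediate, uses a 3-circuit of $U_2$ through $f_1$, and extends through two of $U_2$'s branches (each contributing $s_2 - 1$ elements by Lemma~\ref{unisizer}(iii)) has size $2s_2 - 1 + \alpha$; by Lemma~\ref{circuitsizer}, this equals $s_1$. The symmetric computation from $U_2$'s side yields $s_2 = 2s_1 - 1 + \alpha$. Subtracting forces $s_1 = s_2$, and then $s_1 = 1 - \alpha \leq 1$, contradicting $s_1 \geq 3$.

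The central obstacle is the circuit-size computation in the uniqueness step: one must verify that the described circuit of $M_{e_1,U_1}'$ through $e_1$ exists and that its size is $2s_2 - 1 + \alpha$, and then invoke Lemma~\ref{circuitsizer} to conclude this common value is $s_1$.
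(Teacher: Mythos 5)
Your proof follows the same three-step plan as the paper's: existence from the closure of binary matroids under 2-sum, classification of any 3-connected non-binary label via Propositions~\ref{noU35minor}, \ref{no3whirlminor}, and~\ref{nonbinmanyminor}, and uniqueness by comparing circuit sizes across the path joining the two uniform labels. The substance is correct. The one place you diverge is the uniqueness computation: you set up exact equations $s_1=2s_2-1+\alpha$ and $s_2=2s_1-1+\alpha$, which requires the intermediate contribution $\alpha$ to be well-defined and identical in both directions. As you define it, $\alpha$ is ambiguous whenever an intermediate vertex label is 3-connected (you say only that it ``contributes at least 1''), and at this stage Lemma~\ref{nobinlabels} is not yet available to rule such labels out; to repair this you would need to fix one traversal of the intermediate vertices (one circuit through the two path-basepoints in each intermediate label, with fixed side-branch choices) and use that same traversal for both composite circuits. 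The paper sidesteps all of this with inequalities: a circuit of $M_{e_2,N_2}'$ through $e_2$ contains $C_x-x$ and $C_y-y$, giving $\ell\geq 2(k-1)+1$, and symmetrically $k\geq 2(\ell-1)+1$, whence $k\leq 1$. That version needs no control over what happens strictly between the two uniform labels and is the cleaner way to finish your argument; otherwise the two proofs buy the same thing.
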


\begin{proof}
As $M$ is non-binary, it has at least one 3-connected non-binary vertex label $N$. For each element $y$ of $E(N)$ that labels an edge of $T$, let $C_y$ be a circuit of $M'_{y,N}$ that contains $y$. Let $M''$ be the matroid that is obtained from $N$ by attaching each $C_y$ via 2-sum. Then $M''$ is a restriction of $M$. Thus $M''$ is an orderable series extension of $N$. By Propositions~\ref{noU35minor},~\ref{no3whirlminor}, and~\ref{nonbinmanyminor}, $N\cong U_{2,n}$ for some $n\geq 4$. Now suppose $T$ has a pair of 3-connected non-binary vertex labels $N_1\cong U_{2,n_1}$ and $N_2\cong U_{2,n_2}$ with $n_1,n_2\geq 4$. Let $e_1$ and $e_2$ be the edge labels of $T$ incident with $N_1$ and $N_2$ that lie on the path connecting $N_1$ and $N_2$ in $T$.

By Lemma~\ref{unisizer}(ii), the circuits of $M_{e_1,N_1}'$ containing $e_1$ all have size $k$ and the circuits of $M_{e_2,N_2}'$ containing $e_2$ all have size $\ell$, where $k$ and $\ell$ are integers exceeding one. Let $\{e_1,x,y\}$ be a circuit of $N_1$. By Lemma~\ref{unisizer}(i), $x$ and $y$ are also edge labels of $T$; let $C_x$ be a circuit of $M_{x,N_1}'$ containing $x$, and $C_y$ be a circuit of $M_{y,N_1}'$ containing $y$. Then $k=\vert C_x\vert=\vert C_y\vert$ by Lemma~\ref{unisizer}(iii). Now there is a circuit of $M_{e_2,N_2}'$ containing $e_2$ that also contains $C_x-x$ and $C_y-y$. Thus, $\ell\geq 2(k-1)+1$. A symmetric argument gives that $k\geq 2(\ell-1)+1$, and substitution yields that $k\leq 1$, a contradiction.
\end{proof}

The next lemma rules out 3-connected binary vertex labels that are not circuits or cocircuits. It uses the following result of Seymour~\cite{pds}.

\begin{proposition}
\label{MK4rounded}
Let $M$ be a $3$-connected binary matroid with at least four elements. If $e\in E(M)$, then $M$ has an $M(K_4)$-minor using $e$.
\end{proposition}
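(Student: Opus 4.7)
The plan is to prove the statement by induction on $|E(M)|$, using Tutte's Wheels and Whirls Theorem augmented by Seymour's Splitter Theorem. The base case is $|E(M)| \leq 6$: since $U_{2,4}$ and $U_{2,5}$ are non-binary, and no $3$-connected binary matroid exists on four or five elements, the only $3$-connected binary matroid in this range with at least four elements is $M(K_4)$ itself, for which the statement is trivial.

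For the inductive step with $|E(M)| \geq 7$, I would first invoke Tutte's Wheels and Whirls Theorem. Since no whirl is binary, either $M$ is isomorphic to the cycle matroid of a wheel $W_n$ with $n \geq 4$, or there exists an element $f \in E(M)$ such that $M\backslash f$ or $M/f$ is $3$-connected. In the wheel case, the dihedral symmetry of $W_n$ lets me contract a spoke and delete a rim edge both chosen distinct from $e$, then iterate until I reach $M(W_3) = M(K_4)$ with $e$ still present. If instead $M$ is not a wheel and such an $f$ can be chosen distinct from $e$, the inductive hypothesis applied to the $3$-connected binary matroid $M\backslash f$ or $M/f$ (on at least six elements and still containing $e$) supplies the desired $M(K_4)$-minor through $e$.

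The remaining difficulty is when Tutte's theorem seemingly forces $f = e$. To resolve this I would appeal to Seymour's Splitter Theorem with splitter $N = M(K_4)$: since $M$ is a $3$-connected binary matroid on at least seven elements containing $M(K_4)$ as a proper minor (which follows from applying the induction to any strictly smaller $3$-connected binary minor of $M$), there is a sequence $M(K_4) = M_0, M_1, \ldots, M_k = M$ of $3$-connected single-element extensions and coextensions. Reading this sequence in reverse provides a reduction of $M$ to $M(K_4)$ by $3$-connectivity-preserving single-element removals; at each stage a short local analysis bounds the number of \emph{pinned} elements whose removal would violate $3$-connectivity, so $e$ can always be avoided.

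The principal obstacle is this last point: showing that along a reverse Splitter sequence the element $e$ is never the unique candidate for removal. This roundedness refinement is the heart of Seymour's argument in~\cite{pds}, and a self-contained proof would need to reprise his case analysis on how $e$ attaches to a given $M(K_4)$-minor, separating the configurations that might pin $e$ and showing that they never exclude every alternative.
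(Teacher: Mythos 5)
The paper offers no proof of this proposition to compare against: it is quoted verbatim from Seymour~\cite{pds}, and the authors use it as a black box. Judged on its own, your proposal is an outline whose decisive step is missing, as you concede in your last paragraph. The base case is correct (the only $3$-connected matroids on four or five elements are $U_{2,4}$, $U_{2,5}$, and $U_{3,5}$, all non-binary, and $M(K_4)$ is the unique $3$-connected binary matroid on six elements), and the wheel case is essentially right, though your reduction is stated backwards: contracting a spoke of $W_n$ creates two parallel pairs, and after deleting one rim edge the simplification is a fan, not $W_{n-1}$; one should instead delete a spoke and contract one of the two rim edges that thereby fall into series.

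The genuine gap is the non-wheel case in which the element $f$ supplied by Tutte's Wheels-and-Whirls Theorem might be forced to equal $e$. Tutte's theorem guarantees only one such element, and the Splitter Theorem does not repair this: it asserts the existence of \emph{some} chain of $3$-connectivity-preserving single-element removals from $M$ down to an $M(K_4)$-minor, with no control over which elements are removed, so ``reading the sequence in reverse'' gives no leverage for steering the chain away from $e$. The ``short local analysis'' you invoke to show that $e$ is never the unique removable element is precisely the content of the proposition being proved, so as written the argument is circular at its crux. Two standard ways to close the gap are available: (a) apply Seymour's roundedness criterion (see~\cite{pds} or~\cite[Chapter 12]{oxley}), which reduces the $1$-roundedness of $M(K_4)$ within binary matroids to a finite check of the $3$-connected binary matroids with an $M(K_4)$-minor and at most seven elements --- namely $M(K_4)$, $F_7$, and $F_7^\ast$, where the transitivity of the automorphism groups makes the verification immediate; or (b) replace Tutte's theorem by the stronger fact, due to Oxley and Wu, that a $3$-connected matroid other than a wheel or a whirl has at least two non-essential elements, so a removable $f\neq e$ always exists. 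Until one of these (or an equivalent) is carried out, the proof is incomplete.
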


\begin{lemma}
\label{nobinlabels}
No vertex of $T$ is labelled by a $3$-connected binary matroid with at least four elements.
\end{lemma}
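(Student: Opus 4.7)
The approach is to assume for contradiction that $T$ has a vertex labelled by a $3$-connected binary matroid $N$ with $|E(N)|\geq 4$, and to derive a contradiction with Lemma~\ref{circuitsizer}. By Lemma~\ref{U2nhighlander}, the unique $3$-connected non-binary vertex label of $T$ is $U\cong U_{2,n}$ with $n\geq 4$, and hence $N\neq U$; I would select an edge $e$ of $T$ incident with $N$ that lies on the path from $N$ to $U$. By Proposition~\ref{MK4rounded}, $N$ has an $M(K_4)$-minor using $e$: there exist disjoint $A,B\subseteq E(N)-e$ with $N\del A/B\cong M(K_4)$ and $B$ independent in $N\del A$.

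Since $N$ is binary and $B$ is independent, the projection modulo $B$ yields a canonical linear isomorphism between the cycle spaces of $N\del A$ and $M(K_4)$; in particular, each circuit of $M(K_4)$ lifts uniquely to a circuit of $N\del A$. The element $e$ lies in four circuits of $M(K_4)$---two triangles $T_1,T_2$ and two $4$-cycles $Q_1,Q_2$---whose unique lifts are circuits $T_i'=T_i\cup B_{T_i}$ and $Q_j'=Q_j\cup B_{Q_j}$ of $N\del A$ through $e$, having sizes $3+|B_{T_i}|$ and $4+|B_{Q_j}|$ for subsets $B_{T_i},B_{Q_j}\subseteq B$. Provided these lifts avoid any other edge labels of $T$ incident with $N$---which is automatic when $N$ is a leaf of $T$, and can otherwise be arranged by appropriate choice of the $M(K_4)$-minor or by first deleting the branches of $T$ beyond those other edges via Proposition~\ref{basics}(i)---each of $T_i'$ and $Q_j'$ is a circuit of $M'_{e,U}$ containing $e$.

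Lemma~\ref{circuitsizer} then forces these four lifts to share a common size $k$, giving $|B_{T_i}|=k-3$ and $|B_{Q_j}|=k-4$. The main step is to use the binary cycle-space relations among $T_1,T_2,Q_1,Q_2$ to derive a contradiction: the identity $T_1\triangle T_2\triangle Q_1\triangle Q_2=\emptyset$ in $M(K_4)$ forces $B_{T_1}\triangle B_{T_2}\triangle B_{Q_1}\triangle B_{Q_2}=\emptyset$, while the unique lifts of the three remaining circuits of $M(K_4)$ (two triangles $T_i\triangle Q_j$ and the $4$-cycle $T_1\triangle T_2=Q_1\triangle Q_2$, none through $e$) impose further parity and size constraints on these subsets of $B$. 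Choosing the $M(K_4)$-minor with $|B|$ minimal, one expects these constraints to force $|B|=0$, so that $N$ contains $M(K_4)$ as a restriction through $e$; then circuits of $N$ through $e$ include a triangle of size $3$ and a $4$-cycle of size $4$, contradicting their having common size $k$.

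The main obstacle is this final combinatorial step: verifying that the parity constraints on subsets of $B$, together with minimality of $|B|$, force $B=\emptyset$. The required argument is a careful counting of the characteristic vectors $B_{T_i},B_{Q_j}$, exploiting that any element $b\in B$ whose removal preserves the $M(K_4)$-minor structure could have been deleted rather than contracted, contradicting minimality.
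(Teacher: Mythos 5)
There is a genuine gap, and it sits exactly at the step you flag as the ``main obstacle'': the hoped-for conclusion that the size and parity constraints force $|B|=0$ is false. Take $N\del A\cong F_7^\ast$ with $B=\{x\}$ a single element, so that $(N\del A)/x\cong M(K_4)$. Every circuit of $F_7^\ast$ is a $4$-element set (the complement of a line of $F_7$), so the two triangles of $M(K_4)$ through $e$ lift to $4$-circuits with $|B_{T_i}|=1$, the two $4$-cycles lift to $4$-circuits with $|B_{Q_j}|=0$, all four lifts have common size $4$, the relation $B_{T_1}\triangle B_{T_2}\triangle B_{Q_1}\triangle B_{Q_2}=\emptyset$ holds, and $|B|$ cannot be reduced (deleting $x$ leaves a rank-$4$ matroid). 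This case is not avoidable: a rank-$3$ binary matroid with an $M(K_4)$-restriction simplifies to $M(K_4)$ or $F_7$, so dually the relevant restriction of the vertex label is a series extension of $M(K_4)$ \emph{or of $F_7^\ast$}, and the latter really occurs among orderable matroids (Example~\ref{nonordO1} is an orderable series extension of $F_7^\ast$). In the $F_7^\ast$ case no argument based purely on circuit sizes through the basepoint can succeed, since those sizes are all equal; the paper's proof has to fall back on the consistent ordering itself, using Proposition~\ref{ordimpbalanced}(i) and Lemma~\ref{uniformkeylemma}(i) on the circuit shared with the series extension of $U_{2,n}$ to force every element of one series class to be sandwiched between two elements of $S_p$, yielding $|S_2|<|S_7|$ against the size equation $|S_2|=|S_7|$. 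Your proposal contains no analogue of this, and it cannot be patched within the circuit-size framework.

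Two further, smaller problems. First, your treatment of the other basepoints of $N$ does not work: a circuit of $N$ through $e$ that uses another edge label $q$ is not a circuit of $M'_{e,U}$, and ``deleting the branch beyond $q$'' deletes $q$ from $N$, which can destroy the $M(K_4)$-minor using $e$ and the $3$-connectivity; Proposition~\ref{MK4rounded} gives no control over which other elements the minor uses. The correct move (the paper's) is to \emph{replace} each such $q$ by $C_q-q$ for a circuit $C_q$ of $M'_{q,N}$, which is precisely why one ends up analysing a series extension of $M(K_4)$ or $F_7^\ast$ rather than $M(K_4)$ itself. Second, even in the $M(K_4)$ case the contradiction does not come from forcing $B=\emptyset$: the four-number system $|B_{T_1}|=|B_{T_2}|=|B_{Q_1}|+1=|B_{Q_2}|+1$ is solvable. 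One needs the series-class decomposition, under which the four equal sizes become linear equations in the six class sizes that force one class to be empty.
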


\begin{proof}
Suppose $B$ is such a vertex label of $T$, let $U$ be the unique vertex label with $U\cong U_{2,n}$ and $n\geq 4$ given by Lemma~\ref{U2nhighlander}, and say $E(U)=\{e_1,e_2,\dots,e_n\}$. Let $p\in E(B)$ and $e_1\in E(U)$ be the labels of the edges incident with $B$ and $U$, respectively, that lie on the path connecting $B$ to $U$ in $T$. By Proposition~\ref{MK4rounded}, $B$ has a minor isomorphic to $M(K_4)$ that uses $p$.

This minor can be written in the form $B/I\del I^\ast$, where $I$ is independent in $B$ and $I^\ast$ is coindependent in $B$. This makes $B/I$ a rank-three binary matroid with $M(K_4)$ as a restriction, so after deleting the loops from $B/I$, we obtain a parallel extension of either $M(K_4)$ or $F_7$. Dually, after deleting the coloops from $B\del I^\ast$, we obtain a series extension of $M(K_4)$ or $F_7^\ast$. Thus $B$ has a restriction $N_1$ using $p$ that is a series extension of $M(K_4)$ or $F_7^\ast$.

Suppose $q$ is an edge label of $T$ that is used in $N_1$, and choose a circuit $C_q$ of $M_{q,B}'$ that contains $q$. Form the matroid $N_2$ from $N_1$ by replacing $q$ with $C_q-q$ in $E(N_1)$ for each $q$ in $E(N_1)-p$ that is an edge label of $T$. Then $N_2$ is a series extension of $M(K_4)$ or $F_7^\ast$ that appears as a restriction of $M_{p,B}$. Now, for each $i$ in $\{2,3\}$, let $C_{e_i}$ be a circuit of $M_{e_i,U}'$ that contains $e_i$. Then $M_{p,B}'$ has a circuit $C_p$ that contains $p$ and both $C_{e_2}-e_2$ and $C_{e_3}-e_3$. Form the matroid $N$ from $N_2$ by taking the 2-sum of $N_2$ and $C_p$ across the basepoint $p$. Then $N$ is a restriction of $M$ that is a series extension of $M(K_4)$ or $F_7^\ast$. For each element $x$ of $M(K_4)$ or $F_7^\ast$, let $S_x$ be $S_x(N)$. By Lemma~\ref{circuitsizer}, every circuit of $N_2$ that contains $p$ has the same size.

\begin{figure}
\centering
\centering
\begin{tikzpicture}[style=thick]

\foreach \x in {0,1,2} \draw (0,0)--(90+120*\x:1.8);
\foreach \x in {0,1,2} \draw (90+120*\x:1.8)--(210+120*\x:1.8);

\draw[fill=white] (0,0) circle (0.1);
\foreach \x in {0,1,2} \draw[fill=white] (90+120*\x:1.8) circle (0.1);

\draw (0,-1.1) node {$p$};
\draw (150:1.1) node {$a$};
\draw (100:0.9) node {$b$};
\draw (30:1.1) node {$c$};
\draw (197:0.9) node {$d$};
\draw (-20:0.9) node {$e$};

\end{tikzpicture}
\caption{$K_4$ in the proof of Lemma~\ref{nobinlabels}.}
\label{k4}
\end{figure}

Suppose first that $N$ is a series extension of $M(K_4)$ with $K_4$ labelled as in Figure~\ref{k4}. Thus, every circuit of $N$ that contains $S_p$ has the same size. Since all circuits of $N$ containing $S_p$ have the same size, $\vert S_d\vert+ \vert S_e\vert=\vert S_a\vert + \vert S_b\vert + \vert S_e\vert$, so
\begin{equation}
\label{eq1}
\vert S_d\vert = \vert S_a\vert+ \vert S_b\vert.
\end{equation}
Similarly, $\vert S_a\vert + \vert S_c\vert = \vert S_d\vert + \vert S_b\vert + \vert S_c\vert$, so
\begin{equation}
\label{eq2}
\vert S_a\vert = \vert S_d\vert + \vert S_b\vert.
\end{equation}
Equations~(\ref{eq1}) and~(\ref{eq2}) imply that $\vert S_b\vert=0$, a contradiction.

\begin{figure}[b]
\centering
\begin{tikzpicture}[style=thick]

\draw[fill=black] (-3.5,2.5) circle (0.1);
\draw (-3.55,2.5) node[left] {$p$};
\draw[fill=black] (-1.75,3.75) circle (0.1);
\draw (-1.75,3.75) node[above left] {2};
\draw[fill=black] (1.75,3.75) circle (0.1);
\draw (1.75,3.75) node[above right] {3};
\draw[fill=black] (3.5,2.5) circle (0.1);
\draw (3.55,2.5) node[right] {4};
\draw[fill=black] (-1.75,2.25) circle (0.1);
\draw (-1.75,2.25) node [below left] {5};
\draw[fill=black] (-1.15,3.16) circle (0.1);
\draw (-1.2,3.55) node {6};
\draw[fill=black] (1.15,3.16) circle (0.1);
\draw (1.25,2.8) node {7};

\draw (-5,4.5)--(0,5.5)--(5,4.5)--(5,0.5)--(0,1.5)--(-5,0.5)--(-5,4.5);
\draw (0,1.5)--(0,5.5);
\draw (-3.5,2.5)--(0,5) (-3.5,2.5)--(0,2);
\draw (3.5,2.5)--(0,5);
\draw (-3.5,2.5)--(0,3.5)--(3.5,2.5);
\draw (1.75,3.75)--(0,2)--(-1.75,3.75);
\draw (-1.75,2.25)--(0,5);
\draw (-1.75,3.75).. controls (-2,1.75) ..(0,3.5);

\end{tikzpicture}
\caption{$F_7^\ast$ in the proof of Lemma~\ref{nobinlabels}.}
\label{nonbinf7star}
\end{figure}

Now suppose that $N$ is a series extension of $F_7^\ast$ with $F_7^\ast$ labelled as in Figure~\ref{nonbinf7star}. Since the circuits of $N$ containing $S_{p}$ must have the same size,
\begin{equation*}
\vert S_2\vert + \vert S_5\vert = \vert S_4\vert + \vert S_7\vert,
\end{equation*}
\begin{equation*}
\vert S_2\vert + \vert S_6\vert = \vert S_3\vert + \vert S_7\vert,
\end{equation*}
and
\begin{equation*}
\vert S_5\vert + \vert S_6\vert = \vert S_3\vert + \vert S_4\vert.
\end{equation*}
Together, these equations imply that
\begin{equation}
\label{s2equalss7}
\vert S_2\vert = \vert S_7\vert.
\end{equation}

Fix a consistent ordering of $M$. This induces a consistent ordering of $N$. Consider the circuit $C=S_p\cup S_2\cup S_3\cup S_4$ of $N$. Notice that $M$ has, as a restriction, a series extension $U'$ of $U_{2,n}$ whose ground set contains $C$. Specifically, $C=S_{e_1}\cup S_{e_2}\cup S_{e_3}$, where $S_{e_i}$ is $S_{e_i}(U')$.

Let $t$ be an arbitrary member of the series class $S_2$ of $N$. In $U'$, the element $t$ belongs to the class $S_{e_1}$, so $\{t\}$ is an $S_{e_1}$-block in the ordering of $C$ by Proposition~\ref{ordimpbalanced}(i). Lemma~\ref{uniformkeylemma}(i) implies that $t$ is adjacent to some element $x\in S_{e_2}$ and some $y\in S_{e_3}$; notice that, in $N$, the elements $x$ and $y$ both belong to $S_{p}$. Thus, every element of $S_2$ is adjacent to a pair of elements from $S_p$ in $C$. In particular, $t$ is not adjacent to any element of $S_2$ or of $S_3$. Now observe that $t$ is adjacent to this same pair $\{x,y\}$ in the circuit $S_p\cup S_2\cup S_5\cup S_6$ of $N$, so $t$ is also not adjacent to any element of $S_6$. It follows that $t$ is adjacent to a pair of elements from $S_7$ in the circuit $S_2\cup S_3\cup S_6\cup S_7$ of $N$. Therefore $\vert S_2\vert<\vert S_7\vert$, contradicting~(\ref{s2equalss7}).
\end{proof}

\begin{proposition}
\label{parpathord}
Let $M''$ be obtained from $M$ by parallel-path addition. Then $M$ is orderable if and only if $M''$ is orderable.
\end{proposition}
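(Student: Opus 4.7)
The plan is to prove the two directions separately. The backward direction is short, while the forward direction requires building a consistent ordering of $M''$ from one of $M$.

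\emph{Backward.} I would first show that $M'' \setminus P' = M$. Since $P'$ lies in $E(N) - \{t'\}$, deletion commutes with the 2-sum, giving $M'' \setminus P' = M' \oplus_2 (N \setminus P')$. The matroid $N \setminus P'$ is the cycle matroid of the remaining $(|P|+1)$-cycle $\{t'\} \cup P$, i.e., $U_{|P|,|P|+1}$; so the 2-sum with $M'$ at basepoint $t'$ replaces $t'$ by $P$ in series, exactly inverting the contraction of $P - t$ used to form $M'$, and yields $M$. Proposition~\ref{basics}(i) then gives that orderability of $M''$ forces orderability of $M$.

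\emph{Forward.} Fix a consistent ordering $\omega$ of $M$. The circuits of $M''$ fall into four families: (a) circuits of $M$ disjoint from $P$; (b) circuits $C$ of $M$ containing $P$; (c) sets $(C - P) \cup P'$ for each such $C$; and (d) the circuit $P \cup P'$. Fix a bijection $p_i \mapsto p_i'$ between $P$ and $P'$. For (a) and (b), retain $\omega$. For (c), order $(C - P) \cup P'$ by replacing each $p_i$ in $\omega(C)$ by $p_i'$ in place. For (d), use that $P$ lies in a series class of $M$, so every circuit of $M$ meeting $P$ contains all of $P$, and consistency of $\omega$ makes the set $\pi_P$ of pairs in $P$ that are adjacent in some (equivalently every) $\omega(C)$ with $C \supseteq P$ well-defined; since $\pi_P$ has maximum degree two on $P$ and cannot contain a cycle (as $P$ is independent), its components are paths $B_1, \dots, B_l$. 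Order $P \cup P'$ cyclically as $(B_1\ B_1'\ B_2\ B_2'\ \cdots\ B_l\ B_l')$, where each $B_i'$ is the bijective image of $B_i$.

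\emph{Verification and main obstacle.} Consistency is checked pairwise across the four families. Adjacencies on $(C \cap C') - P$ between two circuits from (b) or (c) reduce directly to $\omega$'s consistency, since the substitution $P \to P'$ preserves the relative positions of $C - P$. Adjacencies on $P$ between a (b)-circuit and the (d)-circuit, and on $P'$ between a (c)-circuit and the (d)-circuit, agree with $\pi_P$ and its bijective image respectively by construction of the interleaved ordering. The main subtlety lies in family (d): although the cyclic order of the $P$-blocks within a given circuit $C$ of $M$ may differ from one $C$ to another, only the adjacency set $\pi_P$ --- not any particular cyclic order --- is pinned down by consistency of $\omega$, and the interleaved ordering is designed to respect precisely this invariant data.
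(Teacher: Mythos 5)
Your proof is correct and follows essentially the same route as the paper's: the backward direction via $M''\setminus P'=M$ and closure of orderability under restriction, and the forward direction via the same four-way classification of the circuits of $M''$, the in-place substitution $P\to P'$ for family (c), and the interleaved ordering $B_1\,B_1'\,B_2\,B_2'\cdots$ of $P\cup P'$. The only cosmetic difference is that you define the $P$-blocks intrinsically from the adjacency relation $\pi_P$ rather than by fixing a single circuit $D\supseteq P$ as the paper does; by consistency of the ordering these yield the same partition.
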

\begin{proof}
In forming $M''$ from $M$, let $P'$ be added in parallel to $P$. As $M''$ has $M$ as a restriction, $M$ is orderable if $M''$ is. Conversely, fix a consistent ordering of $M$ and let $C''$ be a circuit of $M''$. If $C''$ does not meet $P'$, give $C''$ the same ordering in $M''$ that it has in $M$. Otherwise, $C''$ contains $P'$ and either $C''=P\cup P'$, or there is a circuit $C$ of $M$ such that $C=(C''-P')\cup P$. In the the latter case, give $C''$ the same ordering in $M''$ that $C$ has in $M$ by replacing every element $p\in P$ by the corresponding element $p'\in P'$.

If $C''=P\cup P''$, take a circuit $D$ of $M$ containing $P$. Let $B_1$, $B_2$, \dots, $B_k$ be the $P$-blocks of $D$, numbered sequentially as they appear in a traversal of the ordering of $D$ in $M$. For each $i$ in $[k]$, let $B_i'=\{p':p\in B_i\}$. Now, order $C''$ as $B_1$, $B_1'$, $B_2$, $B_2'$, \dots, $B_k$, $B_k'$. It is straightforward to check that this gives a consistent ordering of $M''$.
\end{proof}

\begin{lemma}
\label{concurrencyexchange}
Let $M$ and $N$ be matroids, and let $S$ be a sequence of balanced series extensions and parallel-path additions by which $N$ is obtained from $M$. Suppose that the operation $s_1$ immediately precedes the operation $s_2$ in $S$. Then
\begin{itemize}
\item[(i)] if $s_1$ and $s_2$ are balanced series extensions of orders $m_1$ and $m_2$, then $s_1$ and $s_2$ may be replaced by a single balanced series extension of order $m_1m_2$; and
\item[(ii)] if $s_1$ is a parallel-path addition of size $k$, and $s_2$ is a balanced series extension of order $m$, then, in $S$, the order of the operations $s_1$ and $s_2$ can be reversed provided $s_1$ is replaced by a corresponding parallel-path addition of size $km$.
\end{itemize}
\end{lemma}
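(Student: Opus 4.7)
The plan is to verify each part directly by tracking how the operations act on ground sets, series classes, and circuits.

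For part (i), the argument is essentially bookkeeping. A balanced series extension of order $m_1$ applied to a coloopless matroid $M$ replaces each element $e$ by a set $\{e_1,\ldots,e_{m_1}\}$ of $m_1$ elements in series. The subsequent balanced series extension of order $m_2$ then replaces each $e_i$ in turn by $m_2$ new elements $\{e_{i,1},\ldots,e_{i,m_2}\}$ in series. Since being in series is transitive within a connected series class, all $m_1 m_2$ elements $\{e_{i,j}\}$ are pairwise in series, and because the same replacement happens at every $e$, the composite is a balanced series extension of order $m_1 m_2$. A brief check of the circuits, noting that each original circuit of $M$ has each element replaced by $m_1 m_2$ consecutive in-series elements, confirms that the resulting matroid is the same as the one obtained by a single balanced series extension of order $m_1 m_2$.

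For part (ii), the plan is to describe both sides explicitly. Let $s_1$ add $P'$ in parallel to $P\subseteq S$, with $|P|=k$ and $S$ a series class of $M$, and let $s_2$ be the subsequent balanced series extension of order $m$. Write $M_{12}$ for the result of $s_1$ followed by $s_2$. In $M_{12}$, the set $P$ has been replaced by $km$ elements in series, say $P^{(m)}$, and $P'$ has been replaced by $km$ elements $(P')^{(m)}$ in series, attached as a parallel path to $P^{(m)}$. In the reverse order, applying $s_2$ first to $M$ yields a matroid $N$ whose series class $S$ has grown to $S^{(m)}$, with $P$ expanded to $P^{(m)}\subseteq S^{(m)}$ of size $km$; applying parallel-path addition of size $km$ to $P^{(m)}$ then adds a parallel path of $km$ elements, which matches $(P')^{(m)}$ after the natural identification of ground sets.

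To complete part (ii), the next step is to verify that these two matroids coincide. The plan is to enumerate the circuits and observe that each one is of one of three types: (a) the series-expansion of a circuit of $M$ disjoint from $P$, (b) the series-expansion of a circuit $C$ of $M$ with $P\subseteq C$, together with its ``parallel replacement'' obtained by swapping $P^{(m)}$ for $(P^{(m)})'$, or (c) the cycle $P^{(m)}\cup (P^{(m)})'$ itself. Both orderings produce the same three families, so the matroids are isomorphic. The main obstacle I anticipate is the bookkeeping for type (b). The cleanest way to handle it is to observe that both operations can be realized as $2$-sums with small gadget matroids---an $(m+1)$-circuit at each element for series extension, and the theta-graph matroid with one single-element arc and two arcs of length $k$ for parallel-path addition---and that the relevant $2$-sums are performed at disjoint basepoints. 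The associativity of $2$-sum then makes the resulting matroid independent of the order in which the gadgets are attached, and tracking how the ``size $k$'' of the theta-gadget inflates to $km$ once the circuit-gadgets are inserted into the $P$-arc yields the correspondence claimed in the lemma.
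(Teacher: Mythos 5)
Your proposal is correct and follows essentially the same route as the paper: part (i) by noting that replacing each element by $m_1$ and then $m_2$ elements in series yields $m_1m_2$ elements in series, and part (ii) by observing that after the order-$m$ series extension the parallel paths $P$ and $P'$ become paths of size $km$, so the same matroid arises from doing the series extension first and then a parallel-path addition of size $km$. The paper's proof is just a terser version of this bookkeeping (it declares (i) immediate and disposes of (ii) in three sentences); your additional circuit enumeration and the $2$-sum gadget remark are consistent elaborations rather than a different method.
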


\begin{proof}
Part (i) is immediate. For part (ii), let $P_1$ be the $k$-element set that is added in parallel to the subset $P_2$ of a series class at step $s_1$. After the balanced series extension in step $s_2$ is performed, $P_1$ and $P_2$ become parallel paths $P_1'$ and $P_2'$ of size $mk$. Thus, the same result is obtained by first performing a balanced series extension of order $m$, then adding the $mk$-element set $P_1'$ in parallel to the subset $P_2'$ of a series class.
\end{proof}

We are now ready to prove the main result of the paper, which was given as Theorem~\ref{nonbinchar} in the introduction and is restated here for convenience.

\begin{theorem}
\label{nonbincharsec}
Let $M$ be a connected non-binary matroid. Then $M$ is orderable if and only if it can be obtained from $U_{2,n}$ for some $n\geq 4$ by a sequence of the following operations:
\begin{enumerate}
\item[(i)]{balanced series extension; and}
\item[(ii)]{parallel-path addition.}
\end{enumerate}
\end{theorem}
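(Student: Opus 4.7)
For the forward direction, I would show that any sequence of balanced series extensions and parallel-path additions can be rearranged into the form of (at most) one balanced series extension followed by a sequence of parallel-path additions. Lemma~\ref{concurrencyexchange}(ii) allows us to commute any parallel-path addition past a subsequent balanced series extension (with an adjustment to its size), and Lemma~\ref{concurrencyexchange}(i) merges any consecutive balanced series extensions. In this normal form, Lemma~\ref{balancebringsorder} ensures that the balanced series extension of $U_{2,n}$ is orderable, and Proposition~\ref{parpathord} guarantees that each parallel-path addition preserves orderability; so $M$ is orderable.

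For the reverse direction, let $T$ be the canonical tree decomposition of $M$ supplied by Cunningham and Edmonds. Lemma~\ref{U2nhighlander} yields a unique $3$-connected non-binary vertex label $U\cong U_{2,n}$ with $n\geq 4$, and Lemma~\ref{nobinlabels} rules out any $3$-connected binary vertex label with at least four elements, so every other vertex label of $T$ is a circuit or a cocircuit. If $M$ is a parallel extension of $U_{2,n}$, then $M$ is obtained from $U_{2,n}$ by a sequence of single-element parallel additions, each of which is a parallel-path addition of size $1$, and we are done. Otherwise, Lemma~\ref{unisizer}(i) tells us every element of $E(U)$ labels an edge of $T$, and by combining Lemma~\ref{circuitsizer} with Lemma~\ref{unisizer}(iii) there is a common integer $k\geq 2$ such that every circuit of $M_{e,U}'$ that contains $e\in E(U)$ has size $k+1$. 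The matroid $M_0$ obtained from $U$ by $2$-summing on a $(k+1)$-element circuit at each $e\in E(U)$ is then a balanced series extension of $U_{2,n}$ of order $k$ and appears as a restriction of $M$, exactly as in the proof of Lemma~\ref{U2nhighlander}.

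The main claim is that $M$ can be obtained from $M_0$ by a sequence of parallel-path additions, which I would prove by induction on $|E(M)|-|E(M_0)|\geq 0$. If this quantity is zero, then $T$ consists only of $U$ and its $n$ incident $(k+1)$-circuits and nothing else, so $M=M_0$. Otherwise, I would choose a vertex label $V$ of $T$ of maximum distance from $U$ and identify the elements of $M$ belonging to the ``last'' parallel-path addition: these correspond either to extra elements of a parallel class attached to an $S_{e_i}$-circuit (in the size-$1$ case) or to one of a pair of equal-sized circuit vertices flanking a deepest cocircuit $U_{1,3}$ (in the size-$\geq 2$ case). Deleting these elements yields a smaller matroid $M^-$ which, by Proposition~\ref{basics}(i), is orderable, and which is still connected, non-binary, and has the same $U$ and $k$ in its canonical tree decomposition; so by induction $M^-$ is obtained from $U_{2,n}$ by our two operations, and appending a final parallel-path addition recovers $M$.

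The main obstacle is the structural input supporting the induction step: namely, that a deepest cocircuit vertex $V$ of $T$ (farthest from $U$) either carries only leaves of a single parallel class, or is a $U_{1,3}$ with exactly two further attached circuit vertices of the same size. To establish the equal-size assertion, I would take the theta-graph restriction of $M$ spanned by $V$ together with these two circuits and the path of edges back to $U$, and then apply Proposition~\ref{ordimpbalanced}(ii) and Lemma~\ref{uniformkeylemma} within this restriction to force symmetry between the two flanking circuits. Combined with the global size constraint from Lemma~\ref{circuitsizer}, this pins down the remaining sizes and makes the identification of the parallel path to peel off a routine verification.
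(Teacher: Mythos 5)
Your proposal follows essentially the same route as the paper: normalize the sequence of operations via Lemma~\ref{concurrencyexchange} for the forward direction, and for the converse use the canonical tree decomposition together with Lemmas~\ref{U2nhighlander}, \ref{nobinlabels}, \ref{unisizer}, and~\ref{circuitsizer} to peel parallel-path additions off the deepest parts of the tree. The paper phrases the peeling as an iterative pruning of the subtrees $T_e'$ rather than an explicit induction on $\vert E(M)\vert-\vert E(M_0)\vert$, but the content is the same.

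There is, however, one genuine flaw in the step you single out as the main obstacle. A deepest cocircuit vertex of $T$ need not be a $U_{1,3}$ flanked by exactly two circuit leaves: it can be a $U_{1,m}$ for any $m\geq 3$, with $m-1$ attached circuit leaves. For example, starting from a balanced series extension of $U_{2,4}$ of order $2$ and adding two separate parallel paths to the same two-element subset $P$ of a series class produces a deepest cocircuit label $U_{1,4}$ with three circuit leaves. So the dichotomy you propose to prove is false as stated, and the theta-graph argument via Proposition~\ref{ordimpbalanced}(ii) would be aimed at the wrong target. The correct statement, which is what the paper uses, is that Lemma~\ref{circuitsizer} alone forces all circuits labelling the leaves adjacent to such a cocircuit to have the same size and forces every element of the cocircuit to be a basepoint of $T$; one then deletes all but one of those leaves (each deletion undone by a single parallel-path addition), after which the cocircuit becomes a parallel pair and is absorbed into its remaining neighbour. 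With that adjustment your induction goes through, and no separate equal-size argument is needed beyond Lemma~\ref{circuitsizer}.
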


\begin{proof}
Let $n$ be an integer exceeding three, and let $M$ be a matroid obtained from $U_{2,n}$ by a sequence of balanced series extensions and parallel-path additions. Lemma~\ref{concurrencyexchange} implies that $M$ may be obtained from a balanced series extension of $U_{2,n}$ by a sequence of parallel-path additions, so, by Lemmas~\ref{balancebringsorder} and~\ref{parpathord}, $M$ is orderable. 

For the converse, we may assume that $M$ is simple, as adding an element in parallel is a parallel-path addition of size one. If $M\cong U_{2,n}$, the result holds, so assume otherwise. Let $T$ be the canonical tree decomposition of $M$. Lemmas~\ref{U2nhighlander} and~\ref{nobinlabels} imply that there is a single vertex label $U$ of $T$ for which $U\cong U_{2,n}$ and $n\geq 4$, and every vertex of $T-U$ is labelled by a circuit or a cocircuit. By Lemma~\ref{unisizer}(i), each $e$ in $E(U)$ labels an edge of $T$. Let $T_e'$ be the component of $T\del e$ that does not have $U$ as a vertex. As $M$ is simple, the leaves of $T$ are labelled by circuits. Therefore, if every $T_e'$ has only one vertex, then $M$ is a series extension of $U_{2,n}$, and the result holds by Proposition~\ref{ordimpbalanced}(ii). We show that, if this is not the case, then each $T_e'$ can be reduced to a single vertex labelled by a circuit via a sequence of deletions that can be undone by parallel-path additions.

Suppose $T_e'$ has at least two vertices. Since only one vertex of $T_e'$ is adjacent to $U$, not all vertices of $T_e'$ are leaves of $T$. We now observe that

\begin{sublemma}
\label{specialvert}
$T_e'$ has a vertex $v$ that
\begin{enumerate}
\item[(i)]{is adjacent to a leaf of $T$; and}
\item[(ii)]{has exactly one neighbor that is not a leaf of $T$}
\end{enumerate}
\end{sublemma}

If $L$ is the set of leaves of $T$, such a vertex $v$ can be found as a leaf of $T-L$. Since the leaves of $T$ are labelled by circuits and $T$ is canonical, $v$ is labelled by a cocircuit $C^\ast$. Lemma~\ref{circuitsizer} now implies that the circuits that label the leaves of $T$ adjacent to $C^\ast$ all have the same size, and every element of $C^\ast$ must be used as a basepoint labelling an edge of $T$. 

We can delete all but one of the leaves, $C$ say, of $T$ that are adjacent to $C^\ast$, along with the corresponding basepoints in $C^\ast$, since the circuit that labels each deleted leaf can be added via a parallel-path addition. As $C^\ast$ is now a pair of parallel elements, we can delete the leaf labelled $C$ and relabel $v$ with $C$. At this point, $v$ is a leaf, and is either adjacent to $U$, in which case the work on this subtree is complete, or $v$ is adjacent to another vertex of $T_e'$ labelled by a circuit $C'$. In the latter case, keep $T$ canonical by contracting the edge of $T$ between $v$ and $C'$ and labelling the resulting vertex with the circuit that is the 2-sum of $C$ and $C'$.

Provided the modification of $T_e'$ continues to have at least two vertices, condition~\ref{specialvert} continues to hold, and the process described in the previous paragraph can be repeated. Thus, we may assume $T_e'$ consists of a single vertex labelled by a circuit. By applying this pruning process on the other subtrees attached to $U$, the tree $T$ is reduced to the decomposition tree of a balanced series extension of $U_{2,n}$. Thus, $M$ can be obtained from a balanced series extension of $U_{2,n}$ by a sequence of parallel-path additions.
\end{proof}

\section{Theta-Orderability}
\label{theta}

Recall that theta-orderability of a matroid requires a consistent ordering of the matroid with respect to the theta-graphs of that matroid. Each of the elementary properties of orderability given in Proposition~\ref{basics} also holds for theta-orderability. Their straightforward proofs are omitted.

\begin{proposition}
\label{tbasics}
Let $M$ be a matroid.
\begin{enumerate}
\item[(i)]{If $M$ is theta-orderable, then $M\del e$ is theta-orderable for all $e$ in $E(M)$.}
\item[(ii)]{If $r(M)\leq 2$, then $M$ is theta-orderable.}
\item[(iii)]{$M$ is theta-orderable if and only if the connected components of $M$ are\\theta-orderable.}
\item[(iv)]{$M$ is theta-orderable if and only if $\si(M)$ is theta-orderable.}
\end{enumerate}
\end{proposition}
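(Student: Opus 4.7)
The plan is to verify each part directly, leveraging Proposition~\ref{basics} for the consistent-ordering portion and treating the theta-arc block condition separately.

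For (i), I would fix a theta-ordering of $M$; the circuits of $M\del e$ are precisely those of $M$ avoiding $e$, and every theta-graph of $M\del e$ is a theta-graph of $M$, so both consistency and the block conditions are inherited. Part (iii) is analogous bookkeeping: circuits and theta-graphs live inside single connected components, yielding the required correspondence.

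For (ii), I would start with a consistent ordering from Proposition~\ref{basics}(ii). A theta-graph of $M$ is a series extension of $U_{1,3}$ with arcs of sizes $k_1,k_2,k_3$ and rank $k_1+k_2+k_3-2\leq r(M)\leq 2$, so only $U_{1,3}$ itself and the four-element theta-graph (arc sizes $1,1,2$) arise. In both, every arc lies inside a circuit of size at most $3$, and any subset of a $3$-element cyclic ordering is a block, so the theta condition holds automatically.

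The substantive part is (iv). The forward direction follows from (i) by deleting loops and surplus parallel elements. For the converse, I would fix a theta-ordering of $\si(M)$ and let $\rho$ send each non-loop of $M$ to the representative of its parallel class. Any circuit $C$ of $M$ with $\vert C\vert\geq 3$ has elements in distinct parallel classes, so $\rho|_C$ is a bijection onto the circuit $\rho(C)$ of $\si(M)$; I would order $C$ by lifting the $\si(M)$-ordering of $\rho(C)$, and order $2$-element circuits trivially. Consistency is automatic since no circuit of size at least three can contain a parallel pair. For the theta condition, I would split by cases on a theta-graph $T$ of $M$ with arcs $A_1,A_2,A_3$: if $T$ has a $2$-element circuit then two arcs are singletons $\{a\},\{a'\}$, which are trivially blocks, and the remaining arc $A_3$ is the complement of a single element in each of the circuits $\{a\}\cup A_3$ and $\{a'\}\cup A_3$, hence automatically a block in any cyclic ordering. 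Otherwise all elements of $T$ lie in distinct parallel classes, so $\rho(T)$ is a theta-graph of $\si(M)$ with arcs $\rho(A_1),\rho(A_2),\rho(A_3)$, and the block conditions transfer back to $T$ under our lift. The only subtlety is recognizing that the $2$-element circuit case must be isolated and handled by the observation that the complement of a singleton in any cyclic order is a block, since such a theta-graph need not descend to a theta-graph of $\si(M)$.
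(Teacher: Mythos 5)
The paper omits the proof of this proposition, asserting only that the arguments for Proposition~\ref{basics} carry over, so there is no written proof to compare against; your write-up correctly supplies the missing details, and your isolation of the degenerate theta-graphs (those containing a two-element circuit) in part (iv) is precisely the point requiring care, since such a theta-graph does not descend to one in $\si(M)$. One remark: the way theta-orderings are later used (in the proof of Theorem~\ref{wag}) requires each theta-arc to be a block in \emph{every} circuit of $M$ containing it, not merely in the two circuits of its own theta-graph, which is all you check in the parallel-pair case; this is not a genuine gap, because consistency immediately propagates the block property from one circuit to any other circuit containing the same set (the witnessing listing $b_1,\dots,b_k$ has its consecutive pairs adjacent in one circuit, hence in all), but that one-line observation is worth stating explicitly.
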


Next we prove Theorem~\ref{wag}, a characterization of graphic theta-orderable matroids.

\begin{proof}[Proof of Theorem~\ref{wag}]
It is clear that a graphic matroid is theta-orderable. Moreover, Wagner~\cite{wagnerarcs} proved that a matroid is graphic if and only if it has no set of incompatible arcs. Now suppose that $M$ has a circuit $C$ and a set $\{A_1,A_2,A_3\}$ of incompatible arcs of $C$. It remains to show that $M$ is not theta-orderable. Our proof of this is a straightforward modification of Wagner's proof that no graphic matroid has a set of incompatible arcs~\cite[Lemma 2]{wagnerarcs}. Assume that $M$ is theta-orderable. Because each of $A_1$, $A_2$, and $A_3$ is an arc, for each $i$ in $\{1,2,3\}$, there is a theta-graph of $M$ in which $A_i$ is a theta-arc. As $M$ is theta-orderable, $A_i$ is a block in a consistent ordering of $M$. As $\{A_1,A_2,A_3\}$ is an incompatible set, there are distinct elements $e_1$, $e_2$, and $e_3$ of $C$ such that $e\in A_1\cap A_2\cap A_3$ and $e_i\in A_i-(A_j\cup A_k)$ for all $\{i,j,k\}=\{1,2,3\}$. For each $h$ in $\{2,3\}$, the set $A_1\cup A_h$ is a block in $C$ in which $e$ appears between $e_1$ and $e_h$. Then $e$ does not appear between $e_2$ and $e_3$ in $A_2\cup A_3$, a contradiction.
\end{proof}

To prove Theorem~\ref{tononbinchar}, we will establish the following equivalent version of it.

\begin{theorem}
\label{tononbincharsec}
A simple connected non-binary matroid is theta-orderable if and only if it is a balanced series extension of $U_{2,n}$ for some $n\geq 4$.
\end{theorem}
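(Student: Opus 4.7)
The plan has two parts.

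For $(\Leftarrow)$, let $N$ be a balanced series extension of $U_{2,n}$ with $n\ge 4$. By Lemma~\ref{balancebringsorder}, $N$ is orderable; I would then argue that $N$ has no theta-graph restriction, so that any consistent ordering is vacuously a theta-ordering. Every circuit of $N$ is a union of exactly three full series classes, so the circuits of any restriction $N|X$ correspond to $3$-subsets of $\{i:S_i\subseteq X\}$; if this index set has size $m$, then $N|X$ has $\binom{m}{3}$ circuits, and since $\binom{m}{3}=3$ has no integer solution, no restriction has exactly three circuits.

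For $(\Rightarrow)$, let $M$ be simple, connected, non-binary, and theta-orderable. Because theta-orderability implies orderability, Theorem~\ref{nonbinchar} writes $M$ as obtained from $U_{2,n}$ by balanced series extensions and parallel-path additions, and Lemma~\ref{concurrencyexchange}(ii) lets me reorder so that all balanced series extensions precede all parallel-path additions; write $M=\pi_r\circ\cdots\circ\pi_1(N_0)$, where $N_0$ is a balanced series extension of $U_{2,n}$ and each $\pi_i$ is a parallel-path addition. If $r=0$ we are done. Otherwise let $M_1=\pi_1(N_0)$; since each subsequent $\pi_j$ only introduces new elements $P_j'$ with $M_{j-1}=M_j\setminus P_j'$, the matroid $M_1$ is a restriction of $M$ and so is theta-orderable by Proposition~\ref{tbasics}(i). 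The size $\ell$ of $\pi_1$ must satisfy $\ell\ge 2$: otherwise $M_1$ contains a parallel pair that subsequent parallel-path additions cannot destroy, contradicting simplicity of $M$.

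The task reduces to contradicting the theta-orderability of $M_1$. Say $\pi_1$ adds $P'$ in parallel to $P\subseteq S_1$, with $|P|=\ell$ and $S_1$ a series class of $N_0$ of size $k$. For each pair $\{j,l\}\subseteq\{2,\ldots,n\}$, setting $A_{jl}:=(S_1-P)\cup S_j\cup S_l$, the restriction of $M_1$ to $A_{jl}\cup P\cup P'$ has exactly the three circuits $A_{jl}\cup P$, $A_{jl}\cup P'$, and $P\cup P'$, and so is a theta-graph restriction with arcs $P$, $P'$, $A_{jl}$. With $n\ge 4$ there are at least three such theta-graphs; the argument focuses on $T_{23},T_{24},T_{34}$. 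In any theta-ordering, $P$ is a block in each $C_{jl}:=A_{jl}\cup P$, and since $P$ is itself a series class of $M_1$ (readily verified from the $2$-sum structure), its internal linear order is fixed across all circuits containing it, pinning down a specific endpoint $p$ of the $P$-block; let $\beta_{jl}\in A_{jl}$ denote the element adjacent to $p$ in $C_{jl}$.

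The contradiction follows from case analysis on $\beta_{23}$. Consistency between $C_{jl}$ and $C_{jm}$ forces $\beta_{jl}=\beta_{jm}$ whenever $\beta_{jl}\in A_{jl}\cap A_{jm}=(S_1-P)\cup S_j$. If $\beta_{23}\in S_2$, then $\beta_{24}=\beta_{23}$, and any choice $\beta_{34}\in S_1-P$ or $\beta_{34}\in S_3$ is inconsistent with $C_{23}$ while $\beta_{34}\in S_4$ is inconsistent with $C_{24}$, exhausting $A_{34}$. The case $\beta_{23}\in S_3$ is symmetric. The case $\beta_{23}\in S_1-P$ forces $\beta_{jl}$ to be a single common element $s^*\in S_1-P$ across all three circuits, and the same argument is then run on $s^*$'s inside-block neighbor $\gamma_{jl}$ in each $C_{jl}$, which either immediately lands in the first two cases or drills one step further into $S_1-P$; the recursion exhausts $S_1-P$ after at most $k-\ell$ steps, yielding a contradiction. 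The main technical hurdle is to formalise this recursion cleanly; a streamlined alternative is to constrain both endpoints of the $P$-block jointly so that the $A_{jl}$-block is squeezed from both sides and the contradiction appears without iteration.
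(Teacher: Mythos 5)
Your proof is correct and follows the paper's skeleton for most of its length, but it takes a much longer route at the final step. The forward direction matches the paper's: the paper simply asserts that series extensions of $U_{2,n}$ have no theta-graphs, and your observation that every restriction has $\binom{m}{3}$ circuits, which is never $3$, is a fine justification of that assertion. In the reverse direction, both you and the paper invoke Theorem~\ref{nonbinchar} and Lemma~\ref{concurrencyexchange} to write $M$ as a sequence of parallel-path additions applied to a balanced series extension of $U_{2,n}$, observe that $\vert P\vert\geq 2$ by simplicity, and aim to contradict the requirement that the theta-arc $P$ be a block. The divergence is in how that contradiction is obtained. The paper gets it in two lines: the balanced series extension $B$ is a restriction of $M$, so the theta-ordering of $M$ induces a consistent ordering of $B$, an orderable series extension of $U_{2,n}$; by Proposition~\ref{ordimpbalanced}(i), every block of such an ordering that is contained in a series class is a singleton, so $P\subseteq S$ with $\vert P\vert\geq 2$ cannot be a block. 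Your case analysis on $\beta_{23}$ and the ensuing recursion through $S_1-P$ is, in effect, a from-scratch reproof of a special case of exactly that proposition; the ``clean formalisation'' you are looking for is to delete the analysis and cite Proposition~\ref{ordimpbalanced}(i), whose own proof runs the analogous three-circuit argument once via Lemma~\ref{uniformkeylemma}(i). I checked that your recursion does terminate correctly (the chain $p, s^{*}, s^{**},\dots$ is a path in the adjacency cycle of each $C_{jl}$, so its elements are distinct and $S_1-P$ is eventually exhausted, forcing one of the two contradictory cases), so nothing is wrong; it is just redundant given the machinery already built in Section~\ref{nonbinary}. Likewise, the intermediate reduction to $M_1$ is harmless but unnecessary, since one can work directly with the restriction of $M$ to the balanced series extension and the single theta-graph containing $P$ and $P'$.
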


\begin{proof}
First, for $n\geq 4$, the matroid $U_{2,n}$ and its series extensions have no theta-graphs. Therefore, consistent orderings of these matroids are also theta-orderings.

Conversely, suppose $M$ is a simple connected non-binary orderable matroid. By Theorem~\ref{nonbincharsec} and Lemma~\ref{concurrencyexchange}, for some $n\geq 4$, we can obtain $M$ from a balanced series extension $B$ of $U_{2,n}$ by a sequence of parallel-path additions. It now suffices to show that the sequence of parallel-path additions is empty.

Suppose to the contrary that $P'$ is a set added in parallel to a subset $P$ of a series class $S$ of $B$. Note $\vert P\vert\geq 2$ since $M$ is simple. Now, by Proposition~\ref{ordimpbalanced}, each $S$-block in a consistent ordering of $B$ contains a single element. As $B$ is a restriction of $M$, this implies that the elements of $P$ are not a block in a consistent ordering of $M$. Since $M$ has a theta-graph with $P$ and $P'$ as theta-arcs, this is a contradiction.
\end{proof}

\section{Characterizing 3-Connected Orderable Binary Matroids}
\label{progress}

This section proves the following partial result towards Conjecture~\ref{3connbinconj}. Theorem~\ref{4connreg} is an immediate consequence of this result.

\begin{theorem}
\label{4connbinF7}
A $4$-connected binary orderable matroid with no series minor isomorphic to $F_7^\ast$ is graphic.
\end{theorem}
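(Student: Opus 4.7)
The plan is to argue by contradiction. Suppose $M$ is a 4-connected binary orderable matroid with no series minor isomorphic to $F_7^*$, yet $M$ is not graphic. By Theorem~\ref{exminororderable}, $M$ has a non-orderable series minor, and by Bixby's characterization of excluded series minors for graphic binary matroids (the source cited in the proof of Theorem~\ref{exminororderable}), that series minor $L$ can be chosen from the list
\[
\mathcal{L}=\{F_7,\ F_7^*,\ M^*(K_5),\ M^*(K_{3,3}),\ M^*(K_{3,3}'),\ M^*(K_{3,3}''),\ M^*(K_{3,3}'''),\ R_{10}\}.
\]
By hypothesis $L\neq F_7^*$, and each of the remaining candidates is already known to be non-orderable from the proof of Theorem~\ref{exminororderable}.

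Write $L=M\del X/Y$, so that $M\del X$ is orderable by Proposition~\ref{basics}(i) and $Y$ is formed by a sequence $y_1,\dots,y_k$ of elements, each lying in a 2-cocircuit of the intermediate matroid $M\del X/\{y_1,\dots,y_{i-1}\}$. If $Y=\emptyset$, then $L$ is itself a restriction of $M$ and hence orderable, a contradiction. So $Y$ is non-empty, and the key step is to use the 4-connectivity of $M$ to track the origin of these 2-cocircuits.

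Because $M$ is 4-connected, every cocircuit of $M$ has size at least four. Hence each 2-cocircuit $\{y_i,z_i\}$ of the intermediate matroid must come from a cocircuit of $M$ whose remaining elements lie in $X\cup\{y_1,\dots,y_{i-1}\}$. Tracing these cocircuits back one exhibits, inside $M\del X$, a series extension $N$ of $L$; since $N$ is a restriction of $M$, it inherits orderability. For each candidate $L$, I would then show that no series extension of $L$ is orderable. Each non-orderability argument in the proof of Theorem~\ref{exminororderable} exhibits a circuit of $L$ whose elements are forced into contradictory adjacency patterns, and each such obstruction persists under series extension: in any consistent ordering of a series extension of $L$ the elements of a series class behave coherently, in the manner of Proposition~\ref{ordimpbalanced}, so the circuit-based adjacency contradiction carries over unchanged. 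This yields the desired contradiction.

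The main obstacle will be the bookkeeping in the previous step: showing that the cocircuits of $M$ witnessing the successive 2-cocircuits can be chosen essentially disjointly, so that $N$ really is a series extension of $L$ rather than a more complicated minor such as a 2-sum. A careful induction on $|Y|$, using that $M$ has no cocircuits of size less than four, should suffice, but a cleaner alternative would be to establish the following single lemma in place of the tracing argument: if $M$ is a 4-connected binary matroid admitting some $L\in\mathcal{L}\setminus\{F_7^*\}$ as a series minor, then $M$ contains a restriction isomorphic to a non-orderable series extension of $L$. Given this lemma, the desired contradiction from the orderability of $M$ is immediate.
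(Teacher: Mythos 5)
Your reduction has a fatal gap at its key step. You propose to derive a contradiction by showing that $M$ has a restriction that is an orderable series extension of some $L$ in $\mathcal{L}\setminus\{F_7^*\}$, and then to prove that \emph{no} series extension of any such $L$ is orderable. That last claim is false, and the paper itself exhibits the counterexample: Example~\ref{nonordO2} constructs $O_2$, an orderable series extension of $M^*(K_5)$ (and Example~\ref{nonordO1} does the same for $F_7^*$). This is precisely the difficulty flagged in the introduction --- ``$F_7^\ast$ and $M^\ast(K_5)$ are not orderable, yet each has an orderable series extension'' --- and it is why Theorem~\ref{exminororderable} must demand that \emph{every} series minor be orderable rather than inferring non-orderability upward from an excluded minor. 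Your justification that ``each such obstruction persists under series extension, in the manner of Proposition~\ref{ordimpbalanced}'' does not hold: that proposition is specific to series extensions of $U_{2,n}$, and the existence of $O_1$ and $O_2$ shows that the adjacency obstructions used in the proof of Theorem~\ref{exminororderable} do \emph{not} carry over to series extensions in general. The orderability of the restriction $M\del X$ (which is not itself $4$-connected, so your hypothesis buys you nothing at that stage) therefore yields no contradiction when $L=M^*(K_5)$. Incidentally, the bookkeeping you worry about is a non-issue: by \cite[Proposition 5.4.2]{oxley}, a series minor $M\del X/Y$ can always be written so that $M\del X$ is a series extension of it, with no connectivity hypothesis needed; the paper uses exactly this in Theorem~\ref{exclunifserminor}.

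The paper's actual proof takes an entirely different route that genuinely uses $4$-connectivity: Seymour's adjacency theorem (Theorem~\ref{adjtheorem}) says that if two elements $e,f$ of a $4$-connected binary matroid are never opposite in an $M(K_4)$-minor, then $M$ is graphic or cographic with $e$ and $f$ meeting at a vertex. Proposition~\ref{k4opp} and Lemma~\ref{oppimpadj} show that, in a matroid with no $F_7^*$ series minor, elements adjacent in a consistent ordering can never be opposite in an $M(K_4)$-minor (the $F_7^*$ exclusion is used exactly to guarantee that the relevant restriction is a series extension of $M(K_4)$ rather than of $F_7^*$). Applying Seymour's theorem to an adjacent pair then forces $M$ to be graphic or cographic, and the cographic case is eliminated by producing a triangle in the underlying graph, contradicting $4$-connectivity. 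If you want to salvage your approach, you would need a different mechanism for converting the excluded-series-minor information into a contradiction with orderability of $M$ itself; as written, the argument cannot be completed.
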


Our proof will require the next three results, the first of which is due to Seymour~\cite{pdsadjacency}. Two elements are \emph{opposite} in $M(K_4)$ if they form a matching in the $K_4$.

\begin{theorem}
\label{adjtheorem}
Let $M$ be a $4$-connected binary matroid and let $e$ and $f$ be elements of $M$. Suppose there is no $M(K_4)$-minor of $M$ in which $e$ and $f$ are opposite elements. Then there is a graph $G$ with $M=M(G)$ or $M^\ast(G)$, and $e$ and $f$ are adjacent edges in $G$.
\end{theorem}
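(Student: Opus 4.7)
The plan is to prove the contrapositive: assume $M$ is a $4$-connected binary matroid such that no $M(K_4)$-minor of $M$ has $e$ and $f$ as opposite elements, and show that $M = M(G)$ or $M = M^*(G)$ for some graph $G$ in which $e$ and $f$ are adjacent edges.

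First I would dispatch the case where $M$ or $M^*$ is already known to be graphic. Say $M = M(G)$; the dual version is handled symmetrically using the self-duality of $M(K_4)$. The matroid's $4$-connectivity forces $G$ to be essentially $4$-vertex-connected, so if the edges $e = uv$ and $f = xy$ were non-adjacent in $G$ then $\{u,v\} \cap \{x,y\} = \emptyset$, and Menger's theorem would supply four internally disjoint paths in $G - \{e,f\}$ pairing up these endpoints. Together with $e$ and $f$, these form a $K_4$-subdivision of $G$ in which $e$ and $f$ are opposite edges. Contracting the interior edges of each path and deleting the rest of $G$ extracts an $M(K_4)$-minor of $M$ with $e$ and $f$ opposite, contradicting the hypothesis. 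Hence in the graphic case $e$ and $f$ must be adjacent edges of $G$.

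The substantive step is showing that $M$ must be graphic or cographic. I would argue by contradiction. If not, then by Tutte's excluded-minor characterization $M$ has a minor $N_1 \in \{F_7, F_7^*, M^*(K_5), M^*(K_{3,3})\}$ (obstructing graphicness) and a minor $N_2 \in \{F_7, F_7^*, M(K_5), M(K_{3,3})\}$ (obstructing cographicness). By direct inspection, each of these small matroids is rich in $M(K_4)$-minors: every pair of elements can be realized as an opposite pair in some $M(K_4)$-minor. Using Seymour's Splitter Theorem combined with the $4$-connectivity of $M$, I would construct an intermediate $4$-connected minor that simultaneously retains $\{e,f\}$ and exhibits one of these forbidden substructures, and then extract from it an $M(K_4)$-minor of $M$ in which $e$ and $f$ are opposite, producing the required contradiction.

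The main obstacle is this last reduction: the pair $\{e, f\}$ may be poorly positioned relative to any particular forbidden minor we can produce, and $M(K_4)$ is not a splitter for any of the natural classes involved. Consequently the inductive scheme must simultaneously track the designated pair $e, f$, maintain $4$-connectivity, and steer toward the target submatroid. Executing this requires exploiting the high symmetry of the Fano matroid and of $M^*(K_5)$ and $M^*(K_{3,3})$, together with the considerable circuit-rerouting flexibility afforded by $4$-connectivity, to guarantee that $e$ and $f$ survive into the appropriate position within an $M(K_4)$-minor. This careful element-tracking argument is the technical core of the result.
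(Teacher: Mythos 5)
You should first know the baseline: the paper does not prove this statement at all. It is quoted as a theorem of Seymour \cite{pdsadjacency}, whose proof occupies a research paper of its own, so any short reduction should be viewed with suspicion. Your proposal has two genuine gaps. The first is in the case you dispatch as easy. To make $e=uv$ and $f=xy$ opposite in an $M(K_4)$-minor of $M(G)$ you need a cycle through $u,x,v,y$ in that cyclic order (equivalently, internally disjoint paths realizing the specific pairing $u$--$x$, $x$--$v$, $v$--$y$, $y$--$u$), to which $e$ and $f$ are then added as chords. Menger's theorem gives disjoint paths between the sets $\{u,v\}$ and $\{x,y\}$ with no control over the pairing; the prescribed-pairing problem is a rooted-$K_4$/two-disjoint-paths problem, and graph $4$-connectivity is \emph{not} sufficient for it. The Seymour--Thomassen two-paths theorem identifies the obstruction as a planar embedding with the terminals interleaved on a common face, and this obstruction survives arbitrary planar connectivity: if $G$ is planar with a quadrilateral face $u\,v\,x\,y$ and $e=uv$, $f=xy$ are its opposite sides, then an $M(K_4)$-minor with $e,f$ opposite would yield vertex-disjoint $u$--$x$ and $v$--$y$ paths, which a Jordan-curve argument forbids; yet $e$ and $f$ are nonadjacent in $G$ (they are adjacent only in the planar dual). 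This is precisely why Seymour's conclusion is the disjunction ``$M=M(G)$ or $M^\ast(G)$'' with adjacency in \emph{that} graph, rather than adjacency in a representation fixed in advance; your claim that in the graphic case $e$ and $f$ must be adjacent in $G$ itself is not what the pathology allows you to conclude. A correct treatment must invoke the consequences of \emph{matroid} $4$-connectivity (no triangles or triads, which in particular rules out planar $G$) together with a genuine linkage theorem, not Menger.

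The second and larger gap is that in the main case, where $M$ is neither graphic nor cographic, you have a plan rather than a proof. The observation that $F_7$, $F_7^\ast$, $M^\ast(K_5)$, and $M^\ast(K_{3,3})$ realize every pair of their elements as an opposite pair in some $M(K_4)$-minor is correct but does not lift: a minor of $M$ containing $\{e,f\}$ need not contain any excluded minor in a useful position relative to $e$ and $f$, and the Splitter Theorem preserves a fixed minor while growing toward $M$ --- it does not track two designated elements of $M$ down into prescribed positions in a small minor. You acknowledge this yourself (``this careful element-tracking argument is the technical core of the result''), but that core is exactly the content of Seymour's theorem; as written, the proposal assumes the hard step rather than proving it, so it cannot be accepted as a proof.
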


\begin{proposition}
\label{k4opp}
In a consistent ordering of a series extension $M$ of $M(K_4)$, if two elements correspond to opposite elements in the $M(K_4)$, then they are not adjacent.
\end{proposition}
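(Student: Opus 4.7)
The plan is to assume for contradiction that $e$ and $f$ are adjacent in some consistent ordering of $M$, and to derive a contradiction via case analysis. By the transitive action of $\mathrm{Aut}(K_4)$ on the three pairs of opposite edges, it suffices to treat the case where, under a standard labelling of $K_4$ with vertex set $\{1,2,3,4\}$, we have $e\in S_{12}$ and $f\in S_{34}$. The circuits of $M$ are exactly the sets $C^{*}=\bigcup_{x\in C}S_{x}$ for circuits $C$ of $M(K_4)$; hence the only circuits of $M$ containing both $e$ and $f$ are those corresponding to the two $4$-cycles of $K_4$ through the edges $12$ and $34$, namely $C_1^{*}=S_{12}\cup S_{14}\cup S_{23}\cup S_{34}$ and $C_3^{*}=S_{12}\cup S_{13}\cup S_{24}\cup S_{34}$. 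Consistency forces $e$ and $f$ to be adjacent in both.

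I would then split into four cases according to the series class of $e$'s other neighbor $b$ in $C_3^{*}$, namely $S_{12}$, $S_{13}$, $S_{24}$, or $S_{34}$. In every case, the workhorses are the two triangle-extension circuits $T_1^{*}=S_{12}\cup S_{13}\cup S_{23}$ and $T_2^{*}=S_{12}\cup S_{14}\cup S_{24}$ (corresponding to the two triangles of $K_4$ through edge $12$), together with $C_1^{*}$. The key consistency principle is that for $y\in T_1^{*}\cap C_3^{*}=S_{12}\cup S_{13}$, the element $y$ is adjacent to $e$ in $T_1^{*}$ if and only if it is adjacent to $e$ in $C_3^{*}$, and analogous statements hold for $T_2^{*}\cap C_3^{*}=S_{12}\cup S_{24}$, $T_1^{*}\cap C_1^{*}=S_{12}\cup S_{23}$, and $T_2^{*}\cap C_1^{*}=S_{12}\cup S_{14}$. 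Using this to constrain where $e$'s neighbors can sit in $T_1^{*}$ or $T_2^{*}$, and then transporting those constraints back to $C_1^{*}$, I expect to conclude that $e$ has at least three distinct neighbors in $C_1^{*}$ (two inherited from the triangle extensions, plus $f$ itself), which is impossible.

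As an illustration, when $b\in S_{13}$: the two $C_3^{*}$-neighbors of $e$ are $f$ and $b$, and neither lies in $S_{12}\cup S_{24}=T_2^{*}\cap C_3^{*}$, so no element of $S_{12}\cup S_{24}$ is adjacent to $e$ in $T_2^{*}$; hence both $T_2^{*}$-neighbors of $e$ lie in $S_{14}$. By consistency with $C_1^{*}$ (whose intersection with $T_2^{*}$ is $S_{12}\cup S_{14}$), both of these $S_{14}$-elements are adjacent to $e$ in $C_1^{*}$; but $f\in S_{34}$ is also adjacent to $e$ in $C_1^{*}$, producing three neighbors. The cases $b\in S_{24}$ and $b\in S_{34}$ are handled symmetrically using $T_1^{*}$ (forcing both $T_1^{*}$-neighbors of $e$ into $S_{23}$ and then relaying the contradiction through $C_1^{*}$), while the case $b\in S_{12}$ yields three neighbors of $e$ in $C_1^{*}$ directly from $b$, an $S_{23}$-element forced by $T_1^{*}$, and $f$.

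The main obstacle I foresee is careful bookkeeping in the boundary situations where some series class has size one: the conclusion ``both neighbors of $e$ in $T_1^{*}$ lie in $S_{23}$'' cannot be realized literally when $|S_{23}|=1$, but this actually simplifies rather than weakens the argument, since $e$ must still have two \emph{distinct} neighbors in $T_1^{*}$ (a circuit of size at least three), forcing one of them into $S_{12}\cup S_{13}$ and yielding an immediate contradiction through consistency with $C_3^{*}$. With this subtlety handled, all four cases close, and we conclude that $e$ and $f$ cannot be adjacent.
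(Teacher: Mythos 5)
Your argument is correct, including your handling of the boundary cases where a series class has size one, but it is organized quite differently from the paper's proof. Both arguments are pure consistency-chasing among the seven circuits of a series extension of $M(K_4)$, but you run a four-way case analysis on the series class of $e$'s second neighbour $b$ in one of the two $4$-cycles through $\{e,f\}$, and in every case you overload $e$ with \emph{three} distinct neighbours in the other $4$-cycle $C_1^{*}$. The paper avoids any case split: using the two triangles through a third class (in your labelling $S_{13}$, which joins an endpoint of the edge $12$ to an endpoint of the edge $34$), it produces elements $b_e,b_f\in S_{13}$ adjacent to $e$ and $f$ respectively, so that $b_e\,e\,f\,b_f$ is a block of the $4$-cycle $C_3^{*}$; this shows $f$ has no neighbour in $S_{24}$, a symmetric argument rules out $S_{23}$, and then $f$ is left with too \emph{few} available neighbours in the triangle $S_{23}\cup S_{24}\cup S_{34}$, which avoids $e$'s class entirely. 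So the paper's contradiction is a starvation argument at $f$, while yours is an overcrowding argument at $e$; yours is more mechanical but longer (four cases plus degenerate subcases), while the paper's is shorter at the cost of spotting the right auxiliary class. One small point to tighten: in your case $b\in S_{12}$ you should say explicitly why $e$ cannot have a \emph{second} neighbour in $S_{12}$ within $T_1^{*}$ (such an element would, by consistency, be a third neighbour of $e$ in $C_3^{*}$); that observation is what actually forces the $S_{23}$-element you invoke.
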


\begin{figure}
\centering
\begin{tikzpicture}[style=thick]

\foreach \x in {0,1,2} \draw (0,0)--(90+120*\x:1.8);
\foreach \x in {0,1,2} \draw (90+120*\x:1.8)--(210+120*\x:1.8);

\draw[fill=white] (0,0) circle (0.1);
\foreach \x in {0,1,2} \draw[fill=white] (90+120*\x:1.8) circle (0.1);

\draw (0,-1.2) node {$Y$};
\draw (150:1.2) node {$A$};
\draw (105:0.8) node {$X$};
\draw (30:1.2) node {$D$};
\draw (197:0.9) node {$B$};
\draw (-17:0.9) node {$C$};

\end{tikzpicture}
\caption{$K_4$ in the proof of Proposition~\ref{k4opp}.}
\label{k4oppfig}
\end{figure}

\begin{proof}
Let $A$, $B$, $C$, $D$, $X$, and $Y$ be the series classes of $M$, labelled as in Figure~\ref{k4oppfig}. Take elements $x$ in $X$ and $y$ in $Y$, and suppose $x$ and $y$ are adjacent in the given consistent ordering of $M$.

In the circuit $A\cup X\cup C\cup Y$, we have that $y$ is adjacent to at most one member of $C$. Therefore, in $B\cup C\cup Y$, there must be an element, $b_y$, of $B$ that is adjacent to $y$. Similarly, in $A\cup X\cup B$, there must be an element, $b_x$, of $B$ adjacent to $x$. Now, in $B\cup X \cup D\cup Y$, we have the block $b_xxyb_y$, so no member of $D$ is adjacent to $y$. By symmetry, no member of $A$ is adjacent to $y$. Since $y$ is adjacent to at most one element in $Y$, it follows that there is no second element of $A\cup D\cup Y$ adjacent to $y$, a contradiction.
\end{proof}

\begin{lemma}
\label{oppimpadj}
Suppose $M$ is a binary matroid with no series minor isomorphic to $F_7^\ast$. If $e$ and $f$ are opposite elements in an $M(K_4)$-minor of $M$, then $e$ and $f$ are not adjacent in any consistent ordering of $M$.
\end{lemma}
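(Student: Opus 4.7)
My plan is to assume, for contradiction, that $e$ and $f$ are adjacent in some consistent ordering of $M$, and then to produce a restriction $M \del X$ of $M$ that is a series extension of $M(K_4)$ with $e$ and $f$ lying in opposite series classes. Proposition~\ref{k4opp} then prohibits $e$ and $f$ from being adjacent in the ordering that $M \del X$ inherits from $M$. Because some $4$-cycle of $M(K_4)$ passes through both $e$ and $f$, its lift via the series structure is a circuit of $M \del X$ that contains both elements, and consistency of the inherited ordering forces $e$ and $f$ to be adjacent there, giving the contradiction.

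To produce such an $M \del X$, I start from the given $M(K_4)$-minor and apply Oxley's Proposition~5.4.2 to write $M(K_4) = M \del X / Y$ with $X$ coindependent and $Y$ independent in $M$. Among all such decompositions, I would choose one minimizing $|X|+|Y|$. The goal is then to show that, with this minimal choice, every element of $Y$ lies in a series class of $M \del X$ meeting $E(M) - X - Y$, so that $M \del X$ is genuinely a series extension of $M(K_4)$ and the $M(K_4)$-minor is upgraded to a series minor. Once this is achieved, $M \del X$ inherits a consistent ordering from $M$ via Proposition~\ref{basics}(i), the elements $e$ and $f$ sit in opposite series classes of $M \del X$, and Proposition~\ref{k4opp} closes out the argument as described above.

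The main obstacle is establishing the series structure for the minimal decomposition. The obstruction to this is the existence of some $y \in Y$ whose $(M \del X)$-series class is entirely contained in $Y$, equivalently a cocircuit of $M \del X$ sitting inside $Y$ that is completely absorbed by the contraction. I would attack this obstruction using the no-$F_7^*$-series-minor hypothesis: if such a cocircuit existed, then by combining it, via the linear algebra of binary circuits, with the triangles and $4$-cycles of the $M(K_4)$-minor and performing a suitable series contraction, one should be able to exhibit an $F_7^*$ series minor of $M$, contradicting the hypothesis. Identifying the right combination, and doing so cleanly across the various ways the obstruction could appear, is the technical heart of the proof and the step I expect to be hardest to carry out in full detail.
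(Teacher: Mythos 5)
Your overall skeleton matches the paper's: reduce to a restriction of $M$ that is a series extension of $M(K_4)$ with $e$ and $f$ in opposite series classes, then invoke Proposition~\ref{k4opp} against the inherited consistent ordering. The problem is that the reduction itself --- upgrading the $M(K_4)$-minor to a restriction of $M$ that is a series extension of $M(K_4)$ --- is the entire content of the lemma, and you explicitly leave it unproved. Your plan of taking a decomposition $M(K_4)=M\del X/Y$ minimizing $|X|+|Y|$ and then ruling out, case by case, the ways an element of $Y$ could fail to be in series with the ground set of the minor is not carried out, and it is not clear it can be carried out along the lines you sketch: the obstruction is not only ``a cocircuit of $M\del X$ inside $Y$'' (an element of $Y$ may simply fail to lie in any nontrivial series class of $M\del X$), and producing an $F_7^\ast$ series minor from an arbitrary such configuration by ad hoc manipulation of binary circuits is precisely the hard step you defer.

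The paper avoids all of this with a short duality argument. Writing $N=M\del X/Y$ with $X$ coindependent and $Y$ independent, one gets $N^\ast=M^\ast/X\del Y$ with $r(M^\ast/X)=r(N^\ast)=3$. Hence, after deleting its set $L$ of loops, $M^\ast/X$ is a loopless rank-$3$ binary matroid with a spanning $M(K_4)$-restriction, so it is a parallel extension of $M(K_4)$ or of $F_7$, these being the only simple rank-$3$ binary matroids containing $M(K_4)$. Dualizing back, $M\del(X\cup L)$ is a restriction of $M$ that is a series extension of $M(K_4)$ or of $F_7^\ast$; the hypothesis excludes the latter, and $e,f\notin L$ since $L\subseteq Y$. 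This is where the no-$F_7^\ast$-series-minor hypothesis actually enters: not in killing an obstruction inside $Y$, but in deciding between the two possible rank-$3$ simplifications. Without this (or an equivalent) argument, your proposal is a plan rather than a proof.
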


\begin{proof}
Assume that $M$ has a consistent ordering in which $e$ and $f$ are adjacent. Let $N$ be an $M(K_4)$-minor of $M$ in which $e$ and $f$ are opposite elements, and write $N=M\del X/Y$ with $X$ coindependent and $Y$ independent. Then $N^\ast=M^\ast/X\del Y$, where $r(M^\ast/X)=r(N^\ast)=3$. Since $M^\ast/X$ is binary and $N^\ast\cong M(K_4)$, if $L$ is the set of loops of $M^\ast/X$, then $M^\ast /X\del L$ is a parallel extension of either $M(K_4)$ or $F_7$. It follows that $M\del(X\cup L)$ is a series extension of $M(K_4)$ or $F_7^\ast$. By assumption, $M$ has no series minor isomorphic to $F_7^\ast$, so $M\del(X\cup L)$ is a series extension of $M(K_4)$. However, $e$ and $f$ are adjacent in the consistent ordering of $M\del(X\cup L)$ inherited from $M$ and correspond to opposite elements in $N$, a contradiction by Proposition~\ref{k4opp}.
\end{proof}

We now prove the main result of this section.

\begin{proof}[Proof of Theorem~\ref{4connbinF7}]
Let $M$ be a 4-connected binary orderable matroid that does not have $F_7^\ast$ as a series minor. Take a consistent ordering of $M$ and assume $M$ is not graphic. Suppose $M$ is cographic, letting $M=M^\ast(G)$ for some graph $G$. Take an edge $e$ of $G$ with endpoints $u$ and $v$. Let $(x_1\ x_2\ \cdots\ x_n\ e)$ be the ordering on the edges meeting $u$, and let $(e\ y_1\ y_2\ \cdots\ y_m)$ be the ordering on the edges meeting $v$. Then we may assume the ordering on the bond that is the symmetric difference of these two vertex bonds is $(x_1\ x_2\ \cdots\ x_n\ y_1\ y_2\ \cdots\ y_m)$, so $x_n$ and $y_1$ are adjacent. Combining Lemma~\ref{oppimpadj} and Theorem~\ref{adjtheorem}, we now have that $x_n$ and $y_1$ share an endpoint in $G$. Hence, $\{e,x_n,y_1\}$ is a triangle in $G$, a contradiction as $M$ is 4-connected.

We may now assume that $M$ is not cographic. Let $e$ and $f$ be adjacent elements of $M$. By Theorem~\ref{adjtheorem}, $e$ and $f$ appear as opposite elements in some $M(K_4)$-minor of $M$. Lemma~\ref{oppimpadj} now gives a contradiction.
\end{proof}

\subsection*{Acknowledgements}

The authors thank Jim Geelen for suggesting that~\cite{pdsadjacency} may be helpful in resolving Conjecture~\ref{3connbinconj}.


\end{document}